\documentclass[10pt,a4paper]{article}
\usepackage[utf8]{inputenc}

\usepackage{amsfonts}
\usepackage{amssymb}
\usepackage{amsthm}

\usepackage[T1]{fontenc}
\usepackage{newpxtext,newpxmath}

\usepackage{mathrsfs}  

\usepackage[left=2cm,right=2cm,top=2cm,bottom=2cm]{geometry}

\usepackage{hyperref}
\usepackage{mathtools}
\mathtoolsset{showonlyrefs}

\usepackage{enumerate}

\theoremstyle{plain}
\newtheorem{thm}{Theorem}
\newtheorem{lemma}[thm]{Lemma}
\newtheorem{prop}[thm]{Proposition}
\newtheorem{cor}[thm]{Corollary}
\newtheorem{problem}[thm]{Problem}

\theoremstyle{remark}
\newtheorem{remark}[thm]{Remark}
\newtheorem{hyp}[thm]{Assumption}

\theoremstyle{definition}

\newtheorem{defn}[thm]{Definition}

\newcommand{\ov}{\overline}

\usepackage{bbm}

\newcommand{\one}{\mathbbm{1}}

\usepackage{color}
\usepackage{xcolor}

\newcommand{\RR}{\mathbb{R}}
\newcommand{\R}{\mathbb{R}}

\newcommand{\e}{\epsilon}

\newcommand{\mc}{\mathcal}
\newcommand{\mb}{\mathbb}
\newcommand{\ms}{\mathscr}

\newcommand{\be}{\begin{equation}}
\newcommand{\ee}{\end{equation}}

\newcommand*\dd{\mathop{}\!\mathrm{d}}

\renewcommand{\div}{\operatorname{div}}
\renewcommand{\dim}{\operatorname{dim}}
\newcommand{\osc}{\operatorname*{osc}}

\newcommand{\Image}{\operatorname{Im}}

\newcommand{\diam}{\operatorname{diam}}
\newcommand{\Rank}{\operatorname{Rank}}

\numberwithin{thm}{section}
\numberwithin{equation}{section}

\newcommand{\ds}{\displaystyle}

\title{H\"older continuity for non-coercive Hamilton--Jacobi equations associated to linear control systems}

\begin{document}
\author{Megan Griffin-Pickering\thanks{Institute of Mathematics, University of Z\"urich, Winterthurerstrasse 190, 8057 Z\"urich, Switzerland; megan.griffin-pickering@math.uzh.ch} \and Alp\'ar R. M\'esz\'aros\thanks{Department of Mathematical Sciences, Durham University, DH1 3LE Durham, UK; alpar.r.meszaros@durham.ac.uk}}

\maketitle

\begin{abstract}
In this paper we establish H\"older continuity estimates for viscosity solutions to first order Hamilton--Jacobi equations linked to linear control systems satisfying the Kalman rank condition. Our model Hamiltonians are non-convex in the generalised momentum variable and -- more importantly -- they lack coercivity in certain directions. Therefore, all previously available results from the literature cannot be applied to these degenerate settings. In order to overcome these obstructions, we design a geometric argument, dictated by the linear control system. As a result of this, the obtained H\"older estimates are quantified in an anisotropic way within this geometric framework. The estimates hold true for unbounded source terms, for which one part of our analysis is inspired by a recent result on De Giorgi type methods for hypoelliptic operators.
\end{abstract}

\section{Introduction}
The theory of viscosity solutions for Hamilton--Jacobi (HJ) partial differential equations (PDEs) is a fundamental pillar in the modern theory of nonlinear PDEs, since the influential works of Crandall, Evans and Lions (\cite{CraLio, CraEvaLio}), and going back to the works of Kru\v zkov (\cite{Kru:70, Kru:75}). For a comprehensive account of the development of the theory we refer to the monographs \cite{Bardi-CD, CanSin, Tra}.

While rich properties of viscosity solutions --- such as semi-concavity, Lipschitz continuity, structure of singularity sets, etc. --- have been established when a control theoretic formulation is present (and the resulting Hamiltonians are thus convex in the generalised momentum variable) and Hamiltonians are time independent and regular, much less is known about fine regularity properties for models driven by degenerate noise, stochastic forcing or simply when the Hamiltonians are time dependent or lack regularity. Thus, the following natural question may be formulated: can one obtain quantified locally uniform time-space continuity estimates on solutions, independently of convexity and regularity properties of potentially time dependent Hamiltonians?

Motivated by this question, in the past two decades or so the search for qualitative and quantitative H\"older regularity estimates for viscosity solutions to HJ equations became an important research topic in the field, resulting in some breakthrough results. For a non-exhaustive list of works we refer to \cite{Cardaliaguet2009, CardaliaguetGraber, CarRai, CanCar, Cardaliaguet-Silvestre, ChanVasseur, CarSee, ChanVasseur, StoVas, Cir}. These consider typically a mixture of first and second order models. 

A striking outcome of this line of work is that it is now known that H\"older regularity holds for viscosity solutions of first-order HJ equations without regularity or convexity assumptions on the Hamiltonian, imposing instead a \emph{superlinear growth} condition with respect to the momentum variable.
Thus {\it full coercivity} (i.e. uniform coercivity in all directions) can serve as a regularisation mechanism, without the need for elliptic/parabolic structure -- for example, as \cite{Cardaliaguet-Silvestre} formulates: ``The regularisation effect is based only on the strong coercivity assumption of $H$ with respect to $Du$, [\dots].''

However, many interesting HJ equations satisfy only a \emph{degenerate} version of such a coercivity condition.
A large, natural class of examples is provided by Hamilton--Jacobi--Bellman (HJB) equations for optimal control problems in which \emph{certain directions are forbidden} to the controller.
Consider, for example, a linear time-invariant (LTI) control system (of the kind standard in control theory), reading as follows for the state-control pair $(\eta,\beta) \in \R^N \times \R^N$:
\begin{equation}\label{intro:control_sys0}
\dot\eta_{t} = A\eta_{t} + P_{0}\beta_{t}.
\end{equation}
Here $A$ and $P_0$ are constant $N \times N$ matrices. When $\Rank P_0 < N$, the choice of $\dot\eta_{t}$ is restricted to a proper affine subspace of $\R^N$.
A standard form for an optimal control problem driven by \eqref{intro:control_sys0} is to minimise a functional
\be \label{eq:OptCtrl_Functional}
\eta\mapsto \int_0^T L(t, \eta_t, \beta_t) \dd t + g(\eta_T),
\ee
where $L:(0,T)\times\R^N\times \R^N\to\R$ and $g:\R^N\to\R$ are some given Lagrangian and final cost functions, respectively.
The Hamiltonian $\mc{H}$ of the corresponding HJB equation 
\be \label{intro:HJB}
- \partial_t u + \mc{H}(t,x, \nabla_x u) = 0 ,\qquad u(T,x) = g(x)
\ee
then cannot be coercive: since
\be
 \mc{H} \left (t,x, p \right ) : = - \langle Ax ,  p \rangle + \sup_{\beta \in \R^N} \left \{ - \langle P_0 \beta,  p \rangle - L(t,x, \beta) \right \}
\ee
(see \cite{Bardi-CD} for the derivation), consequently for all $p \in \left (P_0 \R^N \right)^\perp$ we have $\mc{H} \left (t,x, p \right ) = - \langle Ax ,  p \rangle - \inf_{\beta \in \R^N} L(t,x, \beta) $, such that $\lim_{|p| \to + \infty} \mc{H} \left (t,x, p \right )= + \infty$ cannot hold. 

On the other hand, for these Hamiltonians it still makes sense to ask for coercivity (or indeed superlinear growth) in the \emph{controlled} directions $p \in P_0 \R^N$. This raises the following question: {\it can H\"older regularity properties be obtained, when the Hamiltonian is coercive only in a limited set of directions?} 
In this paper we answer this question in the affirmative for a rich class of model problems, whose key structural feature is that the drift matrix $A$ is such that \eqref{intro:control_sys0} is \emph{controllable}.

Namely, we establish local H\"older continuity properties of viscosity solutions $u$ to first order HJ equations of the form
\begin{align} \label{eq:HJ_intro}
\partial_t u(t,x) + \left \langle Ax, \nabla_x u(t,x) \right \rangle + H(t, x, \nabla_x u(t,x)) - f(t,x) = 0,
\end{align}
posed on an open subset $\ms{U}$ of $\R\times\R^N$. This choice of structure is motivated by optimal control problems driven by linear control systems of the form \eqref{intro:control_sys0}, but our hypotheses allow more general settings: in particular, we do not require convexity of $H$ in its last variable. 

As standing assumptions on the data, we impose: 

\begin{hyp} \label{hyp:standing}
\begin{enumerate}[(i)]
\item \label{hyp:standing_H} $H$ satisfies the following {\it degenerate} superlinear coercivity condition: there exist an exponent $q>1$ and constants $0 < \lambda \leq  \Lambda < + \infty$ such that
\be\label{ineq:coerc}
\frac{\lambda^q}{q} |P_0 \xi |^q \leq H(t, x, \xi) \leq \frac{\Lambda^q}{q} |P_0 \xi |^q, \qquad \forall (t,x,\xi) \in  \ms{U}\times\R^N ,
\ee
where $P_0$ is a given orthogonal projection matrix with $\Rank P_0 < N$.
\item \label{hyp:standing_f} $f$ is locally uniformly bounded below with positive part $f_+ \in L_{\rm{loc}}^p(\R\times\R^N)$, where $p > 1$ is sufficiently large; the precise condition is stated below in Equation \eqref{hyp:pqN}. 
\item \label{hyp:standing_A} $A\in\R^{N\times N}$ is a given constant matrix.
\item $A$ and $P_0$ satisfy the {\it Kalman rank condition} \cite{Kalman}: there exists $K \in \mb{N}$ such that
\be \label{hyp:Kalman}
\R^N = \Image(A^K P_0) + \Image(A^{K-1} P_0) + \ldots + \Image(P_0) 
\ee
(this is one of several equivalent forms, see e.g. \cite{Zabczyk}).
\end{enumerate}
\end{hyp}

Under these assumptions, we prove interior H\"older regularity for viscosity solutions of \eqref{eq:HJ_intro} (Corollary~\ref{cor:ViscSolutions}).

\begin{remark}[Comments on the Assumptions]

Although we motivated the form of \eqref{eq:HJ_intro} using the LTI control system \eqref{intro:control_sys0}, we emphasise that our results can be applied to HJ equations derived from more general control systems. This is because we require only upper and lower bounds on $H$ of the form \eqref{ineq:coerc}.
In particular we can consider control-affine systems of the form
\begin{equation}\label{intro:control_sys}
\dot\eta_{t} = A\eta_{t} + \sum_{i=1}^{m} \beta^i_t \zeta_i(t,\eta_t), \qquad \beta_t = (\beta_t^1, \ldots, \beta_t^m )\in \R^m ,
\end{equation}
with Lagrangian cost satisfying for some constants $0 < c \leq C$,
\be
c |\beta|^{q'} \leq L(t,x, \beta) \leq C |\beta|^{q'} \qquad \forall (t,x) \in \ms{U}, \; \forall \beta \in \R^m,
\ee
as long as the control functions $\zeta_i(t,x)$ are such that the matrix $Z : = \sum_{i=1}^{m} \zeta_i \otimes \zeta_i$ is bounded and uniformly elliptic on $P_0 \R^N$, i.e. for some $0 < \mu \leq M < +\infty$,
\be
\mu |P_0 \xi|^2 \leq \sum_{i=1}^m |\xi \cdot \zeta_i(t,x)|^2 \leq M |P_0 \xi|^2 \qquad \forall (t,x) \in \ms{U}, \; \forall \xi \in \R^N .
\ee
\end{remark}

While our analysis is inspired by the approaches taken in \cite{CanCar, Cardaliaguet-Silvestre}, new sets of ideas are necessary to overcome the nontrivial challenges posed by the lack of full coercivity of the Hamiltonian. The heart of our analysis is to rely on a {\it geometric framework} enforced by the setting of the underlying linear control system \eqref{intro:control_sys0}.

The connection between geometric structure, controllability properties and regularity has long been recognised in the study of (degenerate) parabolic PDEs. The celebrated work of H\"ormander \cite{Hormander} (see also \cite{FollandStein, RothschildStein}) identified that diffusion operators of the form $\partial_t - X_0 - \sum_{i=1}^m X_i^2$ ($X_i$ being smooth real vector fields on $\R^N$) are hypoelliptic if the Lie algebra generated by the vector fields $\partial_t - X_0$, $\{ X_i \}_{i=1}^m$ spans the full tangent space of $\R^{N+1}$. Meanwhile, similar bracket-generating conditions underly the Chow--Rashevskii theorem \cite{Chow, Rashevskii} in geometric control theory and sub-Riemannian geometry. The study of Hörmander operators and the related topic of systems of H\"ormander vector fields has launched a vast and rich field of research: we refer for example to the monographs \cite{BonLanUgu, Bramanti_book} for a modern overview of the topic. 

Among degenerate parabolic PDEs, the natural points of comparison for the HJ equations \eqref{eq:HJ_intro} studied in this article are the \emph{ultraparabolic equations of Kolmogorov type} \cite{Lanconelli-Polidoro, Polidoro94}, which are of the (here, divergence) form
 \be \label{eq:ultra}
 \partial_t u + \langle Ax, \nabla_x u \rangle - \div_x \left (P_0 \sigma(t, x) P_0\nabla_x u \right ) = 0 ,
 \ee
where $\sigma$ is a real symmetric matrix that is uniformly elliptic with respect to the subspace $P_0 \R^N$. These are prototypes of equations based on H\"ormander vector fields for which commutators with the drift vector field $X_0$ are essential for generating the full tangent space (i.e. unlike `sum of squares' heat operators of the form $\partial_t - \sum_{i=1}^m X_i^2$, in which the Lie algebra generated by $\{ X_i \}_{i=1}^m$ spans $\R^N$). Moreover, the class contains several specific examples of particular interest, including \emph{kinetic} equations arising in statistical physics. Applications to nonlinear models such as the Landau equation motivate the study of \eqref{eq:ultra} when $\sigma$ has low regularity (perhaps bounded measurable) and has prompted recent intensive study of regularity properties, Harnack inequalities, Poincar\'e-type inequalities and related questions for Kolmogorov-type equations \eqref{eq:ultra}, the kinetic case in particular, and nonlocal generalisations thereof: we refer to the reviews \cite{GolseReview, AncPiccReb, BriMou} for an account of these developments.
  The present article is in a somewhat similar spirit, and indeed part of our analysis is inspired by results from \cite{ADGLMR}. At the same time, the underlying regularising mechanism is rather different: it arises in \emph{first-order} PDEs and is tied inextricably to the nonlinearity.

\subsection*{Description of our main results.}

The Kalman rank condition \ref{hyp:Kalman} has several useful consequences. For example, any two points in $\R^N$ can be connected by a controlled trajectory in positive time, i.e. for any $x,y \in \R^N$ and $t > 0$, there exists a continuous control $\beta : [0,t] \to \R^N$ such that the corresponding solution $\eta$ of \eqref{intro:control_sys0} with $\eta(0) = x$ satisfies $\eta(t) = y$ \cite[Proposition 1.1, Theorem 1.2]{Zabczyk}. It also induces a useful decomposition of $\R^N$ that we will now introduce in order to be able to state our results.

\begin{defn}[ Orthogonal Decomposition] \label{def:OrthoDecomp}
Assuming the Kalman rank condition, we define $\kappa = \kappa(A, P_0)$ to be the smallest such $K$ for which \eqref{hyp:Kalman} holds.

Then $\R^N$ can be decomposed into orthogonal subspaces $(E_k : k = 0, \ldots, \kappa)$ in the following way. Let $V_0 = E_0 = \Image(P_0)$ and 
\be
V_k := \Image(A^k P_0) + \Image(A^{k-1} P_0) + \ldots + \Image(P_0) \qquad k = 1, \ldots \kappa .
\ee
Thus $A(V_k) \subset V_{k+1}$ for all $k = 1, \ldots, \kappa - 1$; recall that $V_\kappa = \R^N$ by \eqref{hyp:Kalman}.

Then let $(E_k : k = 1, \ldots, \kappa)$ be successive orthogonal complements such that
\be
V_k = V_{k-1} \oplus E_k , \qquad  k=1, \ldots, \kappa.
\ee
Hence $V_k  : = \bigoplus_{j=0}^k E_j$, and in particular $\R^N = \bigoplus_{k=0}^\kappa E_k$. 

The projection matrix onto $E_k$ is denoted by $P_k$.

Notice that, for each $k = 0, \ldots, \kappa$, 
\be \label{A_upper_triangular}
A(E_k) \subset  A(V_k) \subset V_{k+1} = \bigoplus_{j=0}^{k+1} E_j.
\ee
\end{defn}

\begin{defn}\label{def:omega}
Given $\alpha \in (0,1]$, the {\it anisotropic modulus of continuity} $\omega_\alpha$ is defined by
\be
\omega_\alpha(t,x) : =  |t|^\alpha + \sum_{j=0}^\kappa |P_j x|^{\frac{\alpha}{\alpha/q' + 1/q + j}} \qquad (t,x) \in \R \times \R^N .
\ee

\end{defn}

The main theorem of this paper can be formulated as follows.

\begin{thm} \label{thm:main}
Let $\ms{U} \subset \R \times \R^N$ be an open set. Let $q>1$, $c_0 \in  C(\ms{U})$ and $f_+ \in L^{p}_{\rm loc}(\ms{U})$ be given, where, with the previously introduced notion $n_{j}$ and $\kappa$,
\be\label{hyp:pqN} 
p >  N/q + 1 + \sum_{j=1}^\kappa j n_j . 
\ee 
Furthermore, let $u \in C_b(\ms{U})$ be a viscosity supersolution of
\be \label{eq:HJ_geq-thm}
\partial_t u + \left \langle Ax , \nabla_{x} u \right \rangle + \frac{\Lambda^q}{q} |P_0 \nabla_{x} u |^q + c_0 =0 \qquad \text{in} \; \ms{U}
\ee
and a viscosity subsolution of 
\be
\label{eq:HJ_leq-thm}
\partial_t u + \left \langle Ax , \nabla_{x} u \right \rangle + \frac{\lambda^q}{q} |P_0 \nabla_{x} u |^q - f  = 0 \qquad \text{in} \; \ms{U} .
\ee
Then $u$ is locally H\"older continuous in $\ms{U}$.

More precisely, there exists $\alpha \in (0,1)$ such that, for all compact subsets $K \subset \ms{U}$, there exists $C>0$ such that
\be \label{est:Holder_aniso}
|u(s,y) - u(t,x)| \leq C \omega_\alpha \left(t-s, y - e^{-(t-s)A} x\right) 
\ee
for all $(s,y), (t,x) \in K$ such that $s \leq t$.
The exponent $\alpha$ depends on $\| u \|_{L^\infty(\ms{U})}$, $\lambda$, $\Lambda$, $A$ and $q$. The constant $C>0$ additionally depends on a choice of open set $\ms{V}$ compactly contained in $\ms{U}$ with $K \subset \ms{V}$, on $\sup_{\ms{V}} c_0$ and on $\|f_+\|_{L^{p}(\ms{V})}$.
\end{thm}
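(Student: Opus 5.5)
Our strategy follows the Cardaliaguet--Silvestre approach, with the Euclidean geometry replaced by the one generated by the control system $\dot\eta = A\eta + P_0\beta$. The argument has two halves. The subsolution property yields an upper control of $u$ along controlled trajectories. The supersolution property yields a lower control, obtained by propagating information backward in time.

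\textbf{Step 1 (upper bound along trajectories).} From the subsolution inequality \eqref{eq:HJ_leq-thm} we derive a dynamic programming (sub-optimality) inequality. For a controlled path $\eta$ on $[s,t]$ with $\eta_s=y$, $\eta_t = x$, we aim for
\[
u(s,y) \le u(t,x) + \int_s^t \Big( c\,|\beta_\tau|^{q'} + f_+(\tau,\eta_\tau)\Big)\dd\tau .
\]
Here $c\,|\beta|^{q'}$ is the Legendre dual of $\frac{\lambda^q}{q}|P_0\xi|^q$. Since $f_+$ is only $L^p$, this inequality holds only in an averaged sense. We therefore average over a family of controls, e.g.\ translates $\beta+\gamma$ with $\gamma$ ranging over a small ball, chosen so that the endpoint map $\gamma\mapsto\eta_t$ covers an anisotropic box of positive measure. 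The change of variables uses the Kalman condition: the controllability Gramian is invertible, and its scaling gives the anisotropic box sizes $r^{1/q+j}$-type in $E_j$. Hölder's inequality then bounds $\int f_+$ along the averaged paths by $\|f_+\|_{L^p}$ times a power of the box volume. Condition \eqref{hyp:pqN} is exactly what makes this power positive; this is the De Giorgi-inspired part, in the spirit of \cite{ADGLMR}.

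\textbf{Step 2 (minimal-energy controls and scaling).} We take the minimal $L^{q'}$-energy control joining $y$ to $e^{(t-s)A}$-type transported points. Define the dilations $\delta_r(t,x)=(r t, \sum_j r^{1/q+j}\cdots P_jx)$, adapted to the block structure \eqref{A_upper_triangular}. A scaling argument then shows that the cost of steering $y$ to $x$ in time $h$ is bounded by $C\,h\,\big(d/h\big)^{q'}$-type quantities, where $d$ is the anisotropic distance of $y-e^{-hA}x$. The exponents $\alpha/(\alpha/q'+1/q+j)$ in $\omega_\alpha$ arise from balancing time $h$ against the quasi-norm.

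\textbf{Step 3 (reverse inequality).} From the supersolution inequality \eqref{eq:HJ_geq-thm}, with $c_0$ bounded on $\ms V$, $\partial_tu+\langle Ax,\nabla u\rangle\le C-\frac{\Lambda^q}{q}|P_0\nabla u|^q\le C$. Hence $\tau\mapsto u(\tau,e^{(\tau-s)A}y)-C\tau$ is nonincreasing along drift characteristics; this is a one-sided bound obtained via comparison with test functions. Combined with Step 1, this gives a one-sided oscillation estimate. To get the two-sided estimate \eqref{est:Holder_aniso}, we follow \cite{Cardaliaguet-Silvestre}. We use the coercivity in \eqref{eq:HJ_geq-thm} to show that the supersolution cannot decrease too fast in backward cones. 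This yields an improvement-of-oscillation lemma on intrinsic cylinders $Q_r$ adapted to $\delta_r$: $\osc_{Q_{\theta r}}u\le(1-\mu)\osc_{Q_r}u + Cr^{\gamma}$. Iterating gives the Hölder exponent $\alpha$, which depends on $\|u\|_\infty$, $\lambda$, $\Lambda$, $A$ and $q$ through $\mu$ and $\theta$. Translation invariance of the equation under the group law $(s,y)\circ(t,x)=(s+t, x+e^{tA}y)$ lets us recenter the estimate at each point.

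\textbf{Main obstacle.} The improvement-of-oscillation step is the main obstacle, since controls can only move in $E_0$. Our plan is a measure-theoretic alternative. Either $u$ is near its supremum on a set of positive measure in a past subcylinder, in which case Step 1 (averaged over controls reaching every point) pushes the bound forward to all of $Q_{\theta r}$. Or it is not, and then the supersolution gives decay. Controllability is what guarantees that every point of the small cylinder is reached with uniformly bounded cost from a positive-measure set.
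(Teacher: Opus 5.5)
There is a genuine gap in the central step, the improvement of oscillation: the roles of the sub- and supersolution inequalities are reversed. For the forward equation $\partial_t u+\langle Ax,\nabla_x u\rangle+H=f$, the subsolution property gives, for $s<t$ and $\dot\eta=A\eta+P_0\beta$ with $\eta_s=y$, $\eta_t=x$,
\[
u(t,x)\le u(s,y)+\int_s^t\Big(\tfrac{1}{q'\lambda^{q'}}|\beta_\tau|^{q'}+f_+(\tau,\eta_\tau)\Big)\dd\tau .
\]
Your Step 1 inequality has the reverse direction, and it fails: $u(t,x)=-t$ is a subsolution with $f=0$ and violates it for $\beta=0$.

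Your Step 3 bound $\partial_t u+\langle Ax,\nabla_x u\rangle\le C-\frac{\Lambda^q}{q}|P_0\nabla_x u|^q$ is a subsolution-type inequality. The supersolution property only yields $\partial_t u+\langle Ax,\nabla_x u\rangle\ge -c_0-\frac{\Lambda^q}{q}|P_0\nabla_x u|^q$, which gives no monotonicity along characteristics; for instance, $u=Mt$ is a supersolution for every $M\ge0$ when $c_0\ge0$.

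Consequently your dichotomy does not close. The subsolution only transports \emph{smallness} forward, so ``$u$ near its supremum on a positive-measure part of a past cylinder'' yields nothing. The other branch rests on the false Step 3.

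The paper instead compares $u$ on $Q^h_1$ with two explicit value functions. The first is $u^\ast_g$, with Lagrangian $\frac{1}{q'\lambda^{q'}}|\beta|^{q'}$, source $\e f$, and bottom data $g\ge u(-1,\cdot)$ with $g\equiv1$ off $e^{-A_h}(2\Omega_\delta)$; it is made compatible with the lateral boundary by a lower bound on the cost of reaching it. The second is $u_\ast^\ell$, with Lagrangian $\frac{1}{q'\Lambda^{q'}}|\beta|^{q'}$ and data $\ell\le u(-1,\cdot)$ vanishing off $e^{-A_h}\Omega_1$. The key input for the lower bound is that any trajectory reaching $Q^{h,\gamma}_\delta$ from outside $2e^{-A_h}\Omega^\gamma_\delta$ costs at least $c\,\delta^{q'(1+\kappa)}$ (Proposition~\ref{prop:tstar}). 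The alternative is then pointwise on the bottom slice: either $\inf_{2e^{-A_h}\Omega^\gamma_\delta}u(-1,\cdot)\le2\theta$, which gives $\sup_{Q^{h,\gamma}_\delta}u\le1-\theta$, or it is $>2\theta$, which gives $\inf_{Q^{h,\gamma}_\delta}u\ge\theta$.

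Two further ingredients are missing.

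\textbf{Intrinsic scaling.} Dilations with $\alpha$-independent exponents $1/q+j$ are incompatible with normalising the oscillation. The rescaled function $r^{-\alpha}u\circ\widetilde D_r^{(\gamma)}$ keeps the coefficients $\lambda,\Lambda$ only when $\gamma=\frac1q+\frac{\alpha}{q'}$ (proof of Lemma~\ref{lem:HJ_rescaled}). Otherwise they degenerate like a power of $r$, and $\theta,\delta$ are not uniform along the iteration. This, not a balancing of time against a quasi-norm, is the source of the exponents $\alpha/(\alpha/q'+1/q+j)$. It also forces the one-step lemma to hold uniformly in $\gamma\in[1/q,1]$, since $\alpha$ is fixed only after $\theta,\delta$. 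Moreover, the dilations do not preserve $A$: they turn it into $A_r=rS(r)^{-1}AS(r)\to A_0$. So the cost bounds and the oscillation lemma must be proved uniformly over $A_h$, $0\le h\le h_\ast$, via a perturbation of the principal-part control problem in the spirit of Seidman--Yong, with cylinders following the flow of $A_h$.

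\textbf{Curved trajectories for the $L^p$ source.} Averaging over bounded control perturbations from a common vertex gives spread $O(|\tau-s|^{j+1})$ in $E_j$. The Jacobian therefore degenerates at least like $|\tau-s|^{N+\sum_j jn_j}$, and H\"older's inequality closes at best for $p>1+N+\sum_j jn_j$, which is strictly worse than \eqref{hyp:pqN}. Reaching the stated threshold requires the curved families of Proposition~\ref{prop:flows}. These use singular controls $|\tau-s|^{\alpha_i-1}$ with $\alpha_i>1/q$ pairwise distinct and close to $1/q$, and their Jacobian degenerates only like $|\tau-s|^{N\alpha^\ast+\sum_j jn_j}$.
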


\begin{remark}
The result extends to $u \in C(\ms{U})$ by localisation, since then $u \in C_b(K)$ for any compact $K \subset \ms{U}$. 
However, note that both the exponent $\alpha$ and the constant $C>0$ then depend on $K$.

Similarly, the coercivity constants $\lambda, \Lambda$ in \eqref{ineq:coerc} can be replaced by functions $\lambda, \Lambda : \ms{U} \to (0, + \infty)$ satisfying $\inf_{K} \lambda > 0$, $\sup_{K} \Lambda < + \infty$ for all compact sets $K \subset \ms{U}$. Both $\alpha$ and $C>0$ then depend on $K$.
\end{remark}

\begin{remark}
The anisotropic modulus of H\"older continuity arises from a scaling property of the equations \eqref{eq:HJ_geq-thm}-\eqref{eq:HJ_leq-thm} related to the geometric structure induced by $A$ and $P_0$. We discuss this in detail below in Section \ref{subsec:scaling}.

The appearance of the free flow $e^{-(t-s)A}$ inside the H\"older modulus in the estimate
\eqref{est:Holder_aniso} is natural in this context -- consider for example the case $\lambda = \Lambda = 0$, $c_0 = 0$, $f = 0$, which gives a pure transport equation whose solutions are constant along paths $t \mapsto e^{t A}x$. Similar estimates are found in the analogous setting for ultraparabolic equations (see e.g. \cite{DiFPol}).
Estimate \eqref{est:Holder_aniso} can be used to obtain a standard H\"older continuity estimate in terms of $|t-s|$ and $|y-x|$:
since
\be
| y - e^{-(t-s)A} x | \leq | y - x | + | (e^{-(t-s)A} - I) x | \leq |y-x| + C_A |t-s| |x| \qquad \text{for} \; |t-s| \leq 1,
\ee
there exists $C>0$ depending on $K$ such that
\be \label{est:Holder_noflow}
|u(s,y) - u(t,x)| \leq C \left ( |t-s|^{\alpha \min \left\{1 , \frac{1}{\alpha/q' + 1/q + \kappa} \right\}} + \sum_{j=0}^\kappa |P_j (y-x)|^{\frac{\alpha}{\alpha/q' + 1/q + j}} \right ) .
\ee
However, the form \eqref{est:Holder_aniso} gives a finer description of the anisotropic H\"older continuity and its relationship with the flow induced by $A$.
\end{remark}

\begin{cor} \label{cor:ViscSolutions}
Let $\ms{U} \subset \R \times \R^N$ be an open set, and suppose that $u \in C_b(\ms{U})$ is a viscosity solution of 
\begin{align} \label{eq:HJ_main_cor}
\partial_t u + \left \langle Ax , \nabla_x u \right \rangle + H(t, x, \nabla_x u)  = f \qquad \text{in} \; \ms{U} ,
\end{align}
where $H$, $f$, $A$ and $P_0$ satisfy Assumption~\ref{hyp:standing}. Then $u$ is locally H\"older continuous in $\ms{U}$.
\end{cor}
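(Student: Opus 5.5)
The plan is to deduce Corollary~\ref{cor:ViscSolutions} directly from Theorem~\ref{thm:main}. The two-sided bound \eqref{ineq:coerc} lets us sandwich the equation \eqref{eq:HJ_main_cor} between the two model equations \eqref{eq:HJ_geq-thm} and \eqref{eq:HJ_leq-thm}. The argument is entirely at the level of test functions, so no regularity of $H$ beyond what is needed to make sense of viscosity solutions is used.

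\emph{Supersolution property.} Let $\varphi \in C^1$ touch $u$ from below at $(t_0,x_0) \in \ms{U}$. Since $u$ is a viscosity supersolution of \eqref{eq:HJ_main_cor}, we have
\be
\partial_t \varphi + \langle A x_0, \nabla_x\varphi\rangle + H(t_0,x_0,\nabla_x\varphi) - f(t_0,x_0) \geq 0 .
\ee
By the upper bound in \eqref{ineq:coerc}, $H(t_0,x_0,\nabla_x\varphi) \le \frac{\Lambda^q}{q}|P_0\nabla_x\varphi|^q$. Hence
\be
\partial_t \varphi + \langle A x_0, \nabla_x\varphi\rangle + \frac{\Lambda^q}{q}|P_0\nabla_x\varphi|^q + c_0 \ge 0, \qquad c_0 := -f .
\ee
So $u$ is a viscosity supersolution of \eqref{eq:HJ_geq-thm}.

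\emph{Subsolution property.} If $\varphi$ touches $u$ from above, the lower bound in \eqref{ineq:coerc} gives
\be
\partial_t \varphi + \langle A x_0, \nabla_x\varphi\rangle + \frac{\lambda^q}{q}|P_0\nabla_x\varphi|^q - f \le 0 .
\ee
So $u$ is a viscosity subsolution of \eqref{eq:HJ_leq-thm}.

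\emph{Main obstacle: regularity of $f$ and $c_0$.} Theorem~\ref{thm:main} requires $c_0 \in C(\ms{U})$ and uses only $\sup_{\ms V} c_0$, whereas Assumption~\ref{hyp:standing}(ii) only gives $f$ locally bounded below with $f_+ \in L^p_{\rm loc}$. For the pointwise viscosity inequalities above to make sense, $f$ must be continuous, which is the standard framework for viscosity solutions of \eqref{eq:HJ_main_cor}. If $f$ is continuous, then $c_0 = -f$ is continuous and bounded above on compact sets, and the hypotheses on $f$ in \eqref{eq:HJ_leq-thm} hold verbatim, with $p$ satisfying \eqref{hyp:pqN}.

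If instead one works with a notion in which $f$ is merely measurable, the first approach is to replace $c_0$ locally by a continuous constant. On a compact $\ms V \Subset \ms U$ we have $f \ge -M$, so take $c_0 := M$ there. Since $\frac{\Lambda^q}{q}|P_0\xi|^q + M \ge \frac{\Lambda^q}{q}|P_0\xi|^q - f$, the supersolution inequality still holds on $\ms V$. Only $\sup_{\ms V} c_0$ enters the constant in Theorem~\ref{thm:main}, so this localisation is harmless.

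\emph{Conclusion.} Applying Theorem~\ref{thm:main} on each $\ms V \Subset \ms U$, together with the remark on localisation following it, yields the anisotropic estimate \eqref{est:Holder_aniso} on every compact $K \subset \ms U$. Combined with the remark converting this into \eqref{est:Holder_noflow}, this gives local H\"older continuity of $u$ in the usual sense.
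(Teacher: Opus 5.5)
Your proposal is correct and matches the intended deduction. The paper gives no separate proof of the corollary, since it follows from Theorem~\ref{thm:main} exactly as you describe: apply the two-sided bound \eqref{ineq:coerc} at the level of test functions, taking $c_0=-f$, or locally a constant upper bound $M\ge \sup_{\ms V}(-f)$; the latter is also what the proof of Theorem~\ref{thm:main} itself does when it works with $\sup_{\ms V} c_0$ and $f_+$ on a compactly contained $\ms V$.
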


\subsection*{The strategy of the proof of the main theorem.}

As this is typically done for H\"older type regularity theory, e.g. \`a la De Giorgi--Nash--Moser and related approaches, to prove Theorem \ref{thm:main} we proceed in two steps. Both of these have to be carefully tailored to our geometric setting, and we now summarise the high level guiding principles and ideas behind them.

\medskip

{\it Step 1. Rescaling.} The heart of our analysis is based on carefully designed rescaling operators, which enable us to move from large to small scales while preserving the principal terms of the HJ equation. Recalling the decomposition of the state space $\R^{N}$ into the subspaces $(E_{i})_{i=0,\dots,\kappa}$, using the matrices $A,P_{0}$ from the control system as described above, we observe that information propagates in an {\it anisotropic way} in each of these subspaces. In order to capture this, each of the $E_{i}$ spaces must scale differently. 
Correspondingly, in Section \ref{sec:geometry} we introduce a family of scaling transformations, that are related to a Lie group structure previously identified in the context of ultraparabolic equations \cite{Lanconelli-Polidoro}; however here the scaling must be tailored to the HJ structure, and in particular to the coercivity exponent $q$. The transformations lead to the construction of natural time-space cylinder-like domains, upon which the localised analysis is performed. The rescaling of the HJ equation does not preserve the drift matrix $A$: rather, at smaller and smaller scales, interestingly the so-called {\it principal part of $A$} is essentially responsible for the drift. The presence of this anisotropic phenomenon leads to the definition of the H\"older modulus of continuity $\omega_{\alpha}$ in Definition \ref{def:omega}, with the help of which we measure the regularity of the solutions. 

\medskip

{\it Step 2. Improvement of oscillations from larger to smaller scales.} Armed with the natural geometric setup designed in {\it Step 1}, in Proposition \ref{prop:ImproveOsc} we prove the crucial \emph{improvement of oscillation} property, which eventually leads to the desired H\"older estimates. This is achieved through comparison with specific families of sub- and supersolutions of the HJ equation with carefully designed boundary data, constructed through an optimal control representation.
The argument is carried out in two sub-steps: 

First, we show the improvement of {\it upper bounds}. 
To show the desired upper bound on our supersolutions, we construct particular admissible trajectories, which must lie within the given cylinder-like domains. Through suitable bounds on the Lagrangian cost associated to these trajectories, we deduce the necessary upper bounds if the source term $f$ is of class $L^{\infty}$. For source terms of class $L^{p}$ much more care is needed, as estimates that rely on the time-space $L^{p}$ average of $f$ {are required}. For this, the introduction of special conic type neighbourhoods around the previously constructed test trajectories is necessary. These neighbourhoods both have positive Lebesgue measure in $\R \times \R^N$, and are built out of solutions of the control system, and therefore allow us to obtain bounds on supersolutions in terms of $L^p$ bounds on the source, provided that the corresponding coordinate transformation is well understood with suitable properties. In particular, for this approach to be successful, the trajectories must be \emph{curved}. Similar ideas have been used e.g. in \cite{CardaliaguetGraber} in regularity estimates for HJ equations in the non-degenerate case without drift, however the construction of suitable trajectories becomes considerably more involved here since they are also required to obey the control system \eqref{intro:control_sys0}. Our approach here is inspired by similar constructions in \cite{ADGLMR} for the ultraparabolic case.

Second, to obtain the necessary {\it lower bound} improvements, we work with specific subsolutions. However for the lower bounds there is no choice but to work with {\it optimal trajectories}. The analysis then becomes a case of understanding where these optimal trajectories might hit the boundary of the cylinder like domains, and their control cost. 

\medskip

The final result on the H\"older estimates is then proven by an iterative argument, alternating the two previous steps; this is the subject of Proposition \ref{prop:small_Holder}, when the source term $f$ is suitably `small' in $L^{p}$-norm. For the general case, one final rescaling argument is needed, performed in Subsection \ref{subsec:general}, which leads to the last step in the proof of Theorem \ref{thm:main}.

\subsection*{Literature review relevant to our results.}

As mentioned above, our results can seen as a natural continuation of the line of works \cite{Cardaliaguet2009, CardaliaguetGraber, CarRai, CanCar, Cardaliaguet-Silvestre, ChanVasseur, CarSee, ChanVasseur, StoVas, Cir, Barles}, studying H\"older estimates of solutions to HJ equations. In all these references, however, the uniformly non-degenerate coercivity of the underlying Hamiltonians played a crucial role. Other significant regularity estimates (such as Sobolev, Lipschitz, $L^q$, etc.) for HJ type equations were obtained in \cite{CDLeoniPor, CarPorTon, CirantGoffi_Lipschitz, CirantGoffi_Maximal}.
One motivation for studying H\"older regularity of solutions to HJ equations comes from problems related to homogenisation, as in \cite{Schwab, JingSouTran, SM_homog}. 

When it comes to PDEs in various geometric settings, quantified regularity results for solutions to HJ equations are sparse. The work \cite{BarFelSor} seems to be the only work studying the H\"older regularity question for solutions to a particular Eikonal type equations with Lie brackets. Other properties of solutions to HJ equations associated to geometric control problems (semi-concavity and Lipschitz regularity for value functions, Hopf-Lax-type representation formulae) were investigated in \cite{BalCalPin, DraLiuZhang}, as well as in \cite{DragoniFeleqi, ManMarTchou_Heisenberg} as part of initial analysis required for the study of mean field games. Similarly, in applications to mean field games and related models, general control systems and properties of the resulting HJB equations are of interest. For a non-exhaustive list of works in this direction we refer to \cite{CutManMarTchou, AchManMarTchou, CanMen, MimStam,  BarCar_LargeDiscount, GPM1, AGPM, Achdou, GianattiSilva}. Our hope is that the regularity results obtained in the current paper will be beneficial for the study of some fine regularity properties of solutions to related mean field games systems.

Fine regularity properties of solutions to other kinds of PDEs of ultraparabolic and kinetic type have received a huge attention: for a very brief selection see for example \cite{Lanconelli-Polidoro, WangZhang_ultra, WangZhang_ultra2, GIMV, ImbertSilvestre}, the reviews \cite{GolseReview, AncPiccReb, BriMou} and the references therein. The works that have particularly inspired part of our analysis are \cite{Lanconelli-Polidoro, ADGLMR}.

\medskip

The structure of the rest of the paper is as follows. Section \ref{sec:properties} contains all the necessary preliminary analysis, detailing various properties of the control system, the geometric framework, the rescaling operations and their action on solutions to the HJ equation, and some further properties of HJ equations. Section \ref{sec:holder} is a main part of the paper containing the elements of the proof of our main theorems. Here we distinguish the two main cases, first presenting the case when the source term is small, and then the general case.

\subsection*{Acknowledgements.}

ARM has been partially supported by the EPSRC New Investigator Award ``Mean Field Games and Master equations'' under award no. EP/X020320/1.

\section{Properties of the Control System and the HJ Equation}\label{sec:properties}
This section collects a few important properties of the controlled ODE
\be \label{eq:Properties_ODE}
\dot \eta = A \eta + P_0 \beta \quad {\text{on an interval}}\ (T_1,T_2),
\ee
which will be used for the rest of the paper. Here $\eta:(T_1,T_2)\to\R^N$ stands for the state variable, while $\beta:(T_1,T_2)\to\R^N$ is the control variable. Precise assumptions on $\beta$ will be made later.

\subsection{Geometric setup}\label{sec:geometry}

The geometric structure we outline here is based on the corresponding analysis for ultraparabolic equations developed in \cite{Lanconelli-Polidoro} (see also the survey \cite{Anceschi-Polidoro}).

\subsubsection{Choice of Basis and the Principal Part Operator}

Under the Kalman rank condition \eqref{hyp:Kalman}, there exists a basis for $\R^N$ such that vectors $x \in \R^N$ are represented in the form $x =(P_\kappa x, \ldots, P_0 x)^\top\in\R^N$, where $\{ P_j \}_{j=0}^\kappa$ denote the projection matrices defined in Definition~\ref{def:OrthoDecomp}. By \eqref{A_upper_triangular}, in this basis the drift matrix $A$ is then of the form
(see \cite{Lanconelli-Polidoro, Anceschi-Polidoro})
\be \label{A_BlockForm}
A = \begin{pmatrix} 
A^{(\kappa, \kappa)} & A^{(\kappa, \kappa-1)} & \mb{O}_{n_\kappa \times n_{\kappa-2}}  & \mb{O}_{n_\kappa \times n_{\kappa-3}}  & \cdots & \mb{O}_{n_\kappa \times n_{0}} \\
A^{(\kappa -1 , \kappa)} & A^{(\kappa-1 , \kappa-1)} & A^{(\kappa-1, \kappa-2)} & \mb{O}_{n_{\kappa-1} \times n_{k-3}}  & \cdots & \mb{O}_{n_{\kappa-1} \times n_{0}} \\
&&&&&\\
\vdots & \vdots & & \ddots & \ddots  & \vdots \\
&&&&&\\
A^{(2 , \kappa)} & A^{(2, \kappa-1)} & \cdots &  & A^{(2,1)}  & \mb{O}_{n_{2} \times n_{0}} \\
A^{(1 , \kappa)} & A^{(1, \kappa-1)}  & \cdots  &  & A^{(1,1)} & A^{(1,0)}\\
A^{(0 , \kappa)} & A^{(0, \kappa-1)}  & \cdots &  &  & A^{(0,0)}
\end{pmatrix}
\ee
where: 
\begin{enumerate}[(i)]
\item $n_j : = \dim E_j$ for $j = 0, \ldots, \kappa$. Notice that $\ds \sum_{j=0}^\kappa n_j = N$.
\item Each block $A^{(j,j-1)} \in \R^{n_j \times n_{j-1}}$ ($j = 1, \ldots, \kappa$) on the {\it upper off-diagonal} is a $n_j \times n_{j-1}$ matrix of (\textbf{full}) rank $n_j$.
\item Each block $A^{(i,j)} \in \R^{n_i \times n_j}$ ($i \leq j$) is a $n_i \times n_{j}$ matrix.
\item Each null block $\mb{O}_{n_{i} \times n_{j}} \in \R^{n_i \times n_j}$ ($i>j+1$) is a $n_i \times n_j$ zero matrix.
\end{enumerate}

The upper off-diagonal blocks are key to understanding the behaviour of $A$ in the present context, and it is helpful to consider the corresponding \emph{principal part} (see \cite{Anceschi-Polidoro} and the references therein).

\begin{defn} \label{def:PrincipalPart}
The \emph{principal part} of $A$ is the matrix $A_0$ given by
\be
A_0 : = \sum_{j=0}^{\kappa - 1} P_{j+1} A P_j,
\ee
or equivalently, in block form as in \eqref{A_BlockForm}, by
\be \label{A0_BlockForm}
A_0 = \begin{pmatrix} 
\mb{O}_{n_\kappa \times n_{\kappa}}  & A^{(\kappa, \kappa-1)} & \mb{O}_{n_\kappa \times n_{\kappa-2}}  & \mb{O}_{n_\kappa \times n_{\kappa-3}}  & \cdots & \mb{O}_{n_\kappa \times n_{0}} \\
\mb{O}_{n_{\kappa-1} \times n_{\kappa}}  & \mb{O}_{n_{\kappa-1} \times n_{\kappa-1}}  & A^{(\kappa-1, \kappa-2)} & \mb{O}_{n_{\kappa-1} \times n_{k-3}}  & \cdots & \mb{O}_{n_{\kappa-1} \times n_{0}} \\
&&&&&\\
\vdots & \vdots & & \ddots & \ddots  & \vdots \\
&&&&&\\
\mb{O}_{n_{2} \times n_{\kappa}}  & \mb{O}_{n_{2} \times n_{\kappa-1}} & \cdots &  & A^{(2,1)}  & \mb{O}_{n_{2} \times n_{0}} \\
\mb{O}_{n_{1} \times n_{\kappa}}  & \mb{O}_{n_{1} \times n_{\kappa-1}}   & \cdots  &  & \mb{O}_{n_{1} \times n_{1}} & A^{(1,0)}\\
\mb{O}_{n_{0} \times n_{\kappa}}  & \mb{O}_{n_{0} \times n_{\kappa-1}}   & \cdots &  &  & \mb{O}_{n_{0} \times n_{0}} 
\end{pmatrix} .
\ee

Note that $A_0$ is a nilpotent matrix with $A_0^{\kappa+1} = \mb{O}_{N \times N}$ and $\ker(A_0) = E_\kappa$.
\end{defn}

\subsubsection{Scaling Transformations}\label{subsec:scaling}

One interpretation of the principal part is that it arises from a certain rescaling of the state space $\R^N$. This is based on the following family of transformations.

\begin{defn} \label{def:S}
\begin{enumerate}[(i)]
\item  The scaling operator $\R \ni r\mapsto S(r) \in \R^{N\times N}$ is defined by
\be\label{eq:def_S}
S(r) = \sum_{i=0}^\kappa r^{i} P_i .
\ee
$S(r)$ is invertible for all $r \neq 0$ with $S(r)^{-1} = S(r^{-1})$. In the case $r=0$, $S(0) = P_0$, which is not invertible for $n_0 < N$.

$S(r)$ has operator norm
\be \label{eq:Sr_norm}
\| S(r) \| = \max_{i=0, \ldots, \kappa} |r|^i = \max \{ 1, |r|^\kappa \} .
\ee

\item For $\gamma , r > 0$, the anisotropic \emph{spatial} dilation $D_r = D_r^{(\gamma)}\in \R^{N\times N}$ is defined by $D_r : = r^\gamma S(r)$. Note that $D_r$ is invertible with $D_r^{-1} = D_{1/r} = r^{-\gamma} S(r^{-1})$.

\item For $\gamma, r > 0$ the anisotropic \emph{space-time} dilation $\widetilde D_r = \widetilde D_r^{(\gamma)}\in\R^{(N+1)\times(N+1)}$ is defined by
\be
\widetilde D_r^{(\gamma)} \begin{pmatrix}
t \\ x
\end{pmatrix} := \begin{pmatrix}
r t \\ D_r^{(\gamma)} x
\end{pmatrix}.
\ee
Note that $\widetilde D_r^{(\gamma)}$ is invertible with $(\widetilde D_r^{(\gamma)})^{-1} = \widetilde D_{1/r}^{(\gamma)}$.

Moreover,
\be \label{eq:detDrTilde}
\det \widetilde D_r^{(\gamma)} =r \det (D_r^{(\gamma)}) = r^{N\gamma+1}\det S(r) = r^{N \gamma + 1 + \sum_{j=1}^\kappa j n_j  }.
\ee 
\end{enumerate}
\end{defn}

\begin{paragraph}{Notation.}
We represent elements of $\R \times \R^N$ interchangeably either as column vectors $\begin{pmatrix}
t \\ x
\end{pmatrix}$ or as comma separated pairs $(t,x)$. 

We use the same notation (e.g. $S(r)$) for a matrix and the linear map it represents. For these matrices the norm $\| \cdot \|$ by default refers to the operator norm, e.g.
\be
\| S(r) \| : =  \sup_{x \in \R^N, \, |x| = 1} \| S(r) x \| .
\ee
\end{paragraph}
 
This definition of $\widetilde D_r$ is partly inspired by \cite{Lanconelli-Polidoro}, however our specific choice here is motivated by its effect when used to rescale the Hamilton--Jacobi equations.
We will now describe this procedure: broadly speaking, our aim is to be able to make the zero order terms negligible compared to the first order terms at small scales.

\begin{lemma} \label{lem:HJ_rescaled}
Let $\ms{U} \subset \R \times \R^N$ be an open set.
Let $u \in C(\ms{U})$ be a viscosity supersolution of
\be \label{eq:HJ_geq-1}
\partial_t u + \left \langle Ax , \nabla_{x} u \right \rangle + \frac{\Lambda^q}{q} |P_0 \nabla_{x} u |^q + c_0 = 0  \qquad  \text{in} \; \ms{U}
\ee
and a viscosity subsolution of
\be \label{eq:HJ_leq-1}
\partial_t u + \left \langle Ax , \nabla_{x} u \right \rangle + \frac{\lambda^q}{q} |P_0 \nabla_{x} u |^q - f = 0 , \qquad \text{in} \; \ms{U},
\ee
where $c_{0}\in\R$ and $f \in L^p(\ms{U})$ is continuous.

Let $0 \leq \alpha \leq 1$ and
define for $r > 0$ the rescaling $u_r \in C(\widetilde D_{1/r}^{(\gamma)} \ms{U})$  by
\be
u_r : = r^{- \alpha} u \circ \widetilde D_r^{(\gamma)} , \qquad 
\text{with} \; \gamma = \frac{1}{q} + \frac{\alpha}{q'} \in \left [ \frac{1}{q}, 1 \right ].
\ee
Then $u_r$  is a viscosity supersolution of
\be \label{eq:HJ_ur_geq}
\partial_t u_r + \left \langle r S(r)^{-1} A S(r) x ,  \nabla_x u_r \right \rangle + \frac{\Lambda^q}{q} | P_0 \nabla_x u_r |^q + r^{1-\alpha } c_0 = 0 \qquad  \text{in} \; \widetilde D_{1/r}^{(\gamma)} \ms{U}
\ee
and a viscosity subsolution of
\be \label{eq:HJ_ur_leq}
\partial_t u_r + \left \langle r S(r)^{-1} A S(r) x , \nabla_x u_r \right \rangle + \frac{\lambda^q}{q} |P_0 \nabla_x u_r |^q - r^{1-\alpha} f  \circ \widetilde D_{r}^{(\gamma)} = 0 \qquad  \text{in} \; \widetilde D_{1/r}^{(\gamma)} \ms{U} .
\ee
The rescaled source term $r^{1-\alpha} f  \circ \widetilde D_{r}^{(\gamma)}$ has $L^p$ norm
\be
\| r^{1-\alpha} f  \circ \widetilde D_{r}^{(\gamma)} \|_{L^p(\widetilde D_{1/r}^{(\gamma)}\ms{U})} = r^{ 1 - \frac{1}{p}  (\frac{N}{ q} + 1 + \sum_{i=1}^\kappa i n_i ) - \alpha \left (1 + \frac{N}{ p q '} \right ) } \| f \|_{L^p(\ms{U})} .
\ee

\end{lemma}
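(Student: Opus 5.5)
The plan is a direct chain-rule computation on smooth test functions, carried out inside the viscosity framework, followed by a change of variables for the $L^p$ norm. Fix $r>0$ and write $\widetilde D_r = \widetilde D_r^{(\gamma)}$, $D_r = r^\gamma S(r)$. The first observation is that $\phi$ touches $u_r$ from below (resp. above) at $(t_0,x_0)\in \widetilde D_{1/r}\ms{U}$ if and only if $\psi := r^\alpha \phi\circ \widetilde D_{1/r}$ touches $u$ from below (resp. above) at $\widetilde D_r(t_0,x_0)$. This holds because $\widetilde D_r$ is a linear bijection and multiplication by $r^{\alpha}>0$ preserves order. Hence it suffices to transfer the viscosity inequalities satisfied by $\psi$ back to $\phi$.

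For $(s,y) = (rt, D_r x)$ we have $\phi(t,x) = r^{-\alpha}\psi(rt, D_r x)$, so the chain rule gives
\[
\partial_t\phi = r^{1-\alpha}\partial_s\psi, \qquad \nabla_x\phi = r^{-\alpha} D_r^\top \nabla_y\psi = r^{\gamma-\alpha} S(r)\nabla_y\psi,
\]
where we use that $S(r)$ is symmetric, being a sum of orthogonal projections. Three terms then need to be checked.

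\begin{itemize}
\item \emph{Drift term.} We have $\langle Ay,\nabla_y\psi\rangle = \langle A D_r x, r^{\alpha-\gamma}S(r)^{-1}\nabla_x\phi\rangle = r^{\alpha}\langle S(r)^{-1}AS(r)x,\nabla_x\phi\rangle$. Multiplying by $r^{1-\alpha}$ produces exactly $\langle rS(r)^{-1}AS(r)x,\nabla_x\phi\rangle$.
\item \emph{Hamiltonian term.} Since $P_0 S(r)^{-1} = P_0$, we get $|P_0\nabla_y\psi|^q = r^{(\alpha-\gamma)q}|P_0\nabla_x\phi|^q$. After multiplying by $r^{1-\alpha}$, the exponent is $1-\alpha+(\alpha-\gamma)q$. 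This vanishes precisely when $\gamma = 1/q+\alpha(1-1/q) = 1/q+\alpha/q'$, which is the reason for the choice of $\gamma$.
\item \emph{Zero-order terms.} The terms $c_0$ and $f(\widetilde D_r(t,x))$ simply pick up the factor $r^{1-\alpha}$.
\end{itemize}

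Multiplying the viscosity inequality for $\psi$ at $\widetilde D_r(t_0,x_0)$ by $r^{1-\alpha}>0$ therefore yields the inequalities \eqref{eq:HJ_ur_geq} and \eqref{eq:HJ_ur_leq} for $\phi$ at $(t_0,x_0)$. Continuity of $f$ makes the pointwise subsolution inequality meaningful. The range $\gamma\in[1/q,1]$ follows from $\alpha\in[0,1]$.

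For the $L^p$ norm, substitute $(s,y)=\widetilde D_r(t,x)$ and use \eqref{eq:detDrTilde}:
\[
\int_{\widetilde D_{1/r}\ms{U}}|f\circ\widetilde D_r|^p = r^{-(N\gamma+1+\sum_j jn_j)}\|f\|_{L^p(\ms{U})}^p .
\]
Hence the $L^p$ norm of the rescaled source equals $r^{1-\alpha-\frac1p(N\gamma+1+\sum jn_j)}\|f\|_{L^p(\ms{U})}$. Expanding $N\gamma = N/q + \alpha N/q'$ gives the stated exponent
\[
1-\frac1p\Big(\frac Nq+1+\sum_j jn_j\Big)-\alpha\Big(1+\frac N{pq'}\Big).
\]

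There is no real obstacle here. The only points requiring care are the sign and orientation conventions: one must check that the test-function correspondence preserves the sub/super direction, and that the transpose $D_r^\top=D_r$ is handled correctly in the gradient. The term most likely to hide an error is the drift computation, so that is where I would double-check the conjugation order $S(r)^{-1}AS(r)$.
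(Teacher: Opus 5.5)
Your proposal is correct and follows essentially the same route as the paper: the chain-rule computation with $\nabla_x u_r = r^{\gamma-\alpha}S(r)\,(\nabla_x u\circ\widetilde D_r^{(\gamma)})$, the identities $P_0S(r)^{-1}=P_0$ and $q(\alpha-\gamma)=\alpha-1$, and the change of variables via $\det\widetilde D_r^{(\gamma)}$ for the $L^p$ norm. The only difference is that you carry out the computation directly on test functions, whereas the paper does it for differentiable $u$ and only remarks that the viscosity case follows by transferring the manipulations to the test function.
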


\begin{proof}

For brevity, we perform the rescaling under the assumption that $u$ is differentiable with the inequalities 
\begin{align} 
\partial_t u + \left \langle Ax , \nabla_{x} u \right \rangle + \frac{\Lambda^q}{q} |P_0 \nabla_{x} u |^q + c_0 \geq 0 \\ 
\partial_t u + \left \langle Ax , \nabla_{x} u \right \rangle + \frac{\lambda^q}{q} |P_0 \nabla_{x} u |^q - f \leq 0 ,
\end{align}
satisfied pointwise on $\ms{U}$. The result for viscosity sub/supersolutions is proved similarly, by transferring the manipulations to the test function.

By direct computation, $u_r$ satisfies
\be 
\partial_t u_r = r^{1 - \alpha} \partial_t u \circ \widetilde D_r^{(\gamma)}, \qquad \nabla_x u_r = r^{- \alpha} D_r^{(\gamma)} ( \nabla_x u \circ \widetilde D_r^{(\gamma)}) = r^{\gamma - \alpha} S(r) ( \nabla_x u \circ \widetilde D_r^{(\gamma)})
\ee
We substitute these expressions to find that, for example,
\be
r^{\alpha - 1} \partial_t u_r \circ \widetilde D_{1/r}^{(\gamma)} +  r^{\alpha - \gamma} \left \langle Ax , S (r^{-1}) ( \nabla_{x} u_r \circ \widetilde D_{1/r}^{(\gamma)} ) \right \rangle + r^{q(\alpha - \gamma)} \frac{\Lambda^q}{q} |P_0 S (r^{-1}) ( \nabla_{x} u_r  \circ \widetilde D_{1/r}^{(\gamma)}) |^q + c_0 \geq 0  \quad \text{in} \; \ms{U} .
\ee
For the drift term, we must pay attention to the $x$ dependence: observe that
\be 
\left \langle Ax , S (r^{-1}) ( \nabla_{x} u_r \circ \widetilde D_{1/r}^{(\gamma)} ) \right \rangle = \left \langle A  D_{r}^{(\gamma)}x , S (r^{-1}) \nabla_{x} u_r \right \rangle \circ \widetilde D_{1/r}^{(\gamma)} = r^\gamma \left \langle S (r^{-1}) A S(r) x ,  \nabla_{x} u_r \right \rangle \circ \widetilde D_{1/r}^{(\gamma)}
\ee
where in the second inequality we have used that $S(r^{-1})$ is symmetric and $D_{r}^{(\gamma)} = r^\gamma S(r)$.
Noting also that $P_0 S (r^{-1}) = P_0$, and recalling that $c_0$ is constant, we deduce that
\be
r^{\alpha - 1}\left [ \partial_t u_r +  r \left \langle S (r^{-1}) A S(r) x ,  \nabla_{x} u_r \right \rangle + r^{1 - \alpha + q(\alpha - \gamma)} \frac{\Lambda^q}{q} |P_0 \nabla_{x} u_r |^q + r^{1-\alpha} c_0  \circ \widetilde D_{r}^{(\gamma)} \right ] \circ \widetilde D_{1/r}^{(\gamma)} \geq 0  \quad \text{in} \; \ms{U}
\ee
Since $\alpha - \gamma = \frac{\alpha - 1}{q}$, we obtain
\be
 \partial_t u_r +  \left \langle r S (r^{-1}) A S(r) x ,  \nabla_{x} u_r \right \rangle + \frac{\Lambda^q}{q} |P_0 \nabla_{x} u_r |^q + r^{1-\alpha} c_0  \circ \widetilde D_{r}^{(\gamma)} \geq 0  \qquad \text{in} \; \widetilde D_{1/r}^{(\gamma)} \ms{U},
\ee
and similarly
\be
 \partial_t u_r +  \left \langle r S (r^{-1}) A S(r) x ,  \nabla_{x} u_r \right \rangle + \frac{\lambda^q}{q} |P_0 \nabla_{x} u_r |^q + r^{1-\alpha} f  \circ \widetilde D_{r}^{(\gamma)} \leq 0  \qquad \text{in} \; \widetilde D_{1/r}^{(\gamma)} \ms{U} .
\ee

We compute $\| f \circ \widetilde D_r^{(\gamma)} \|_{L^p}$ using the formula \eqref{eq:detDrTilde} for $\det \widetilde D_r^{(\gamma)}$:
\begin{align}
r^{1-\alpha }  \| f \circ \widetilde D_r^{(\gamma)} \|_{L^p(\widetilde D_{1/r}^{(\gamma)} \mc{U})} & = r^{1-\alpha } |\det \widetilde D_r^{(\gamma)}|^{-1/p} \| f \|_{L^p(\ms{U})} \\
& =   r^{1-\alpha - \frac{1}{p}(N \gamma + (1 + \sum_{i=1}^\kappa i n_i ))} \| f \|_{L^p(\ms{U})} .
\end{align}
Since $\gamma = \frac{1}{q} + \frac{\alpha}{q'}$, we conclude that
\begin{align}
r^{1-\alpha }  \| f \circ \widetilde D_r^{(\gamma)} \|_{L^p(\mc{U})} & = r^{ 1 - \frac{1}{p}  (\frac{N}{ q} + 1 + \sum_{i=1}^\kappa i n_i ) - \alpha \left (1 + \frac{N}{ p q '} \right ) } \| f \|_{L^p(\widetilde D_r^{(\gamma)} \mc{U})}. \label{eq:f_PRescalingCost}
\end{align}
\end{proof}

We observe that the rescaling preserves the time derivative and nonlinear terms, but not the drift term $\langle Ax, \nabla_x u \rangle$. However, we will see that the rescaled drift matrices $r S(r^{-1}) A S(r)$ converge, as $r \to 0$, to the principal part of $A$. For convenience we introduce the following notation for these matrices.

\begin{defn}
For $h \geq 0$, $A_h$ denotes the 
anisotropic rescaling of $A$ defined by
\be
A_h : = \sum_{j=0}^\kappa \sum_{i =0}^{\kappa \wedge (j+1)} h^{j + 1 - i} P_i A P_j ,
\ee 
or, in block form as in \eqref{A_BlockForm} by
\be \label{Ar_BlockForm}
A_h = \begin{pmatrix} 
h A^{(\kappa, \kappa)} & A^{(\kappa, \kappa-1)} & \mb{O}_{n_\kappa \times n_{\kappa-2}}  & \mb{O}_{n_\kappa \times n_{\kappa-3}}  & \cdots & \mb{O}_{n_\kappa \times n_{0}} \\
h^2 A^{(\kappa -1 , \kappa)} & h A^{(\kappa-1 , \kappa-1)} & A^{(\kappa-1, \kappa-2)} & \mb{O}_{n_{\kappa-1} \times n_{k-3}}  & \cdots & \mb{O}_{n_{\kappa-1} \times n_{0}} \\
&&&&&\\
\vdots & \vdots & & \ddots & \ddots  & \vdots \\
&&&&&\\
h^{\kappa-1} A^{(2 , \kappa)} & h^{\kappa-2} A^{(2, \kappa-1)} & \cdots &  & A^{(2,1)}  & \mb{O}_{n_{2} \times n_{0}} \\
h^{\kappa }  A^{(1 , \kappa)} & h^{\kappa-1} A^{(1, \kappa-1)}  & \cdots  &  & h A^{(1,1)} & A^{(1,0)}\\
h^{\kappa+1}  A^{(0 , \kappa)} & h^{\kappa } A^{(0, \kappa-1)}  & \cdots &  &  & h A^{(0,0)}
\end{pmatrix} .
\ee
Note that the case $h=0$ is consistent with Definition~\ref{def:PrincipalPart}.  Furthermore, for $h>0$, $A_h$ may equivalently be defined by
\be
A_h =  h S(h^{-1}) A S(h)  =  \sum_{j=0}^\kappa \sum_{i =0}^{\kappa \wedge (j+1)} h^{j + 1 - i} P_i A P_j .
\ee 

\end{defn}

\begin{remark}
Observe that:
\begin{enumerate}[(i)]
\item For any $h > 0$, $A_0$ is invariant under the rescaling
\be \label{eq:ScalingPreservesA0}
h S(h^{-1}) A_0 S(h) = \sum_{j=0}^\kappa \sum_{i =0}^{\kappa \wedge (j+1)} h^{j + 1 - i} P_i A_0 P_j = \sum_{j=0}^\kappa \sum_{i =0}^{\kappa \wedge (j+1)} \sum_{l=0}^{\kappa - 1} h^{j + 1 - i} P_i P_{l+1} A P_l P_j = \sum_{l=0}^{\kappa - 1} P_{l+1} A P_l = A_0 .
\ee
\item $\lim_{h \to 0} A_h = A_0$ with 
\be
A_h - A_0 = h \sum_{j=0}^\kappa \sum_{i=0}^j h^{j - i} P_i A P_j,
\ee
such that there exists a constant $C>0$ (depending on the choice of the matrix norm $\| \cdot \|$) for which
$\| A_h - A_0 \| \leq C h$ for all $0 < h \leq 1$.
\end{enumerate}
\end{remark}

Lemma~\ref{lem:HJ_rescaled} can then be rewritten in the following form.

\begin{cor} \label{rmk:HJ_rescaled_Ar}
Let $u$, $\alpha$ and $\gamma$ satisfy the hypotheses of Lemma~\ref{lem:HJ_rescaled}. Then, for $r>0$, $u_r$ is a viscosity
supersolution of
\be \label{eq:HJ_ur_geq}
\partial_t u_r + \left \langle A_r x ,  \nabla_x u_r \right \rangle + \frac{\Lambda^q}{q} | P_0 \nabla_x u_r |^q + c^{[r]} = 0 \qquad  \text{in} \; \widetilde D_{1/r}^{(\gamma)} \ms{U}
\ee
and a viscosity subsolution of
\be \label{eq:HJ_ur_leq}
\partial_t u_r + \left \langle A_r x , \nabla_x u_r \right \rangle + \frac{\lambda^q}{q} |P_0 \nabla_x u_r |^q - f^{[r]} = 0 \qquad  \text{in} \; \widetilde D_{1/r}^{(\gamma)} \ms{U} ,
\ee
where
\be
c^{[r]} : = r^{1- \alpha} c_0, \qquad f^{[r]} : =  r^{1-\alpha} f  \circ \widetilde D_{r}^{(\gamma)},
\ee
such that
\be \label{eq:fr_Lp}
\| f^{[r]} \|_{L^p(\widetilde D_{1/r}^{(\gamma)}\ms{U})} =  r^{ 1 - \frac{1}{p}  (\frac{N}{ q} + 1 + \sum_{i=1}^\kappa i n_i ) - \alpha \left (1 + \frac{N}{ p q '} \right ) } \| f \|_{L^p(\ms{U})} .
\ee
\end{cor}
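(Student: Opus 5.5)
This is a direct restatement of Lemma~\ref{lem:HJ_rescaled}, so the plan is to substitute the identity $A_r = r S(r^{-1}) A S(r) = r S(r)^{-1} A S(r)$ into its conclusion. The identity is the equivalent definition of $A_h$ for $h>0$, together with $S(r)^{-1} = S(r^{-1})$ from Definition~\ref{def:S}.

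First I would verify this identity, which is the only nontrivial point. Since $S(r)=\sum_i r^i P_i$ with orthogonal projections $P_i$, we have
\be
r S(r^{-1}) A S(r) = \sum_{i,j} r^{1-i+j} P_i A P_j .
\ee
By \eqref{A_upper_triangular}, $P_i A P_j = 0$ whenever $i > j+1$. The sum therefore reduces to the range $i \leq \kappa \wedge (j+1)$, and each exponent $j+1-i$ is nonnegative. This gives exactly the block form \eqref{Ar_BlockForm} of $A_r$.

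Next, I would replace the drift $\langle r S(r)^{-1} A S(r) x, \nabla_x u_r\rangle$ by $\langle A_r x, \nabla_x u_r\rangle$ in the super- and subsolution statements of Lemma~\ref{lem:HJ_rescaled}. I would also name the rescaled source terms $c^{[r]} = r^{1-\alpha} c_0$ and $f^{[r]} = r^{1-\alpha} f \circ \widetilde D_r^{(\gamma)}$. The formula \eqref{eq:fr_Lp} for the $L^p$ norm is then copied verbatim from the lemma, which computed it using $\det \widetilde D_r^{(\gamma)}$ from \eqref{eq:detDrTilde}.

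No real obstacle is expected. The only care needed is the bookkeeping of the exponents $j+1-i$ and the block-triangular vanishing, which ensures $A_r$ contains no negative powers of $r$.
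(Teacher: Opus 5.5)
Your proposal is correct and matches the paper, which gives no separate proof. There the corollary is simply Lemma~\ref{lem:HJ_rescaled} rewritten using the identity $A_r = r S(r^{-1}) A S(r)$ stated in the definition of $A_h$. Your verification of that identity via $P_i A P_j = 0$ for $i > j+1$ (from \eqref{A_upper_triangular}) and the exponents $j+1-i \geq 0$ is accurate, and the $L^p$ formula transfers verbatim.
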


\begin{remark}

The exponent of $r$ in Equation \eqref{eq:fr_Lp} is strictly positive for 
\be \label{alpha_range}
\alpha < \frac{p - ( \frac{N}{q} + 1 + \sum_{j=1}^\kappa j n_j) }{ p + \frac{N}{ q '}} .
\ee
Such an $\alpha \geq 0$ exists for any $p$ in the range
\be \label{p_range}
p  >  N/q + 1 + \sum_{j=1}^\kappa j n_j .
\ee
For $\alpha \in [0,1)$ satisfying \eqref{alpha_range} it is therefore possible to make the source terms in \eqref{eq:HJ_ur_geq}-\eqref{eq:HJ_ur_leq} as small as desired by taking $r$ sufficiently small.
\end{remark}

\subsubsection{Cylinders}

\begin{defn} \label{def:Omega}

We define a reference domain in the $x$ variable by
\be
\Omega_1 : = \{ x \in \R^N : |x| < 1 \}.
\ee
Then, for any $r > 0$ and $\gamma \in [0,1]$, let
\be \label{def:Omega_r}
\Omega_r^\gamma : = D_r^{(\gamma)} \Omega_1 =  \{ x \in \R^N : |S(r)^{-1} x| < r^\gamma \}.
\ee
\end{defn}

We then define space-time cylinders. These are chosen to follow the flow induced by one of the matrices $A_h$.
\begin{defn}\label{def:Q_r}
Let $r, h > 0$ and $\gamma \in [0,1]$. The cylinder $Q^{h, \gamma}_r$ is defined by
\be
Q_r^{h, \gamma} : = \left \{ (t,x): t \in [- r , 0], \; e^{-t A_{h}} x \in \Omega_r^\gamma \right \} ,
\ee
where $\Omega_r^\gamma$ is the domain defined in equation \eqref{def:Omega_r}.
\end{defn}

\begin{remark} \label{rmk:CylinderTransformation}

We note the following properties of the cylinder $Q^{h, \gamma}_r$ with respect to the transformations in Definition~\ref{def:S}.

\begin{enumerate}[(i)]
\item For $\rho, h >0$, since $S(\rho) A_h S(\rho)^{-1} = h S\left(\frac{h}{\rho}\right)^{-1} A S\left(\frac{h}{\rho}\right) = \rho A_{h \rho^{-1}}$,
\begin{align}
\widetilde D_\rho^\gamma Q^{h, \gamma}_r & = \left \{ (t,x) \; : \; - \rho r \leq t \leq 0, \quad \rho^{- \gamma} e^{- \frac{t}{\rho} A_{h}} S(\rho)^{-1} x \in \Omega_r^\gamma \right \} \\
& = \left \{ (t,x) \; : \; - \rho r \leq t \leq 0, \quad \left | S(r)^{-1} e^{- \frac{t}{\rho} A_{h}} S(\rho)^{-1} x \right | < (\rho r)^\gamma \right \} \\
& = \left \{ (t,x) \; : \; - \rho r \leq t \leq 0, \quad \left | S(\rho r )^{-1} e^{-t  A_{h \rho^{-1}}} x \right | < (\rho r)^\gamma \right \} \\
& = Q^{h\rho^{-1}, \gamma}_{\rho r} \label{eq:Cylinder_dil}
\end{align}
\item In the case $h=0$, by \eqref{eq:ScalingPreservesA0} for any $\rho>0$ we have $S(\rho) A_0 S(\rho)^{-1} = \rho  A_0$. Hence
\be
 \widetilde D_\rho^\gamma Q^{0, \gamma}_r = Q^{0, \gamma}_{\rho r} .
\ee
\item Since $\Omega_1^\gamma$ is independent of $\gamma$, so is $Q^{h, \gamma}_1$. We therefore use the abbreviated notation $Q^{h}_1$.
\end{enumerate}

\end{remark}

\subsubsection{Group Structure}

The free flow $\dot \eta = A_h \eta$ gives rise to an associated family (in fact a Lie group) of transformations of $\R \times \R^{N}$ indexed by $\R \times \R^{N}$. These transformations are compatible with the flow and will therefore be useful for us as the natural translations for moving around $\R \times \R^{N}$. For a fuller discussion of this Lie group structure, particularly in the context of Kolmogorov-type operators, see \cite{Anceschi-Polidoro} and the references therein.

For each $h \geq 0$, the binary operation $\diamond_h$ defined by
\be \label{def:LG_Op}
(\tau , \zeta) \diamond_h (t,x)  : = (\tau + t , x + e^{t A_h} \zeta ) \qquad \forall \tau ,t \in \R, \; \zeta, x \in \R^N 
\ee 
gives $\R \times \R^{N}$ a Lie group structure.
The identity element of the group is $(0,0)$ and inverses are given by
\be
(t,x)^{-1}_h : = (-t , - e^{-t A_h}x) .
\ee
Each $(\tau, \zeta) \in \R^{N+1}$ acts on $\R \times \R^{N}$ via the associated {\it left translation} map $l_{(\tau, \zeta)}^h :  \R \times \R^{N} \to \R \times \R^{N}$, where
\be \label{def:LT}
 l_{(\tau, \zeta)}^h (t, x) : = (\tau , \zeta) \diamond_h (t,x) .
\ee

The relevance of this structure to the present setting is that the left translations preserve the Hamilton--Jacobi equation: for all differentiable $u$, 
\be
\partial_t (u \circ l_{(\tau, \zeta)}^h) + x^\top A^\top_h \nabla_x (u \circ l_{(\tau, \zeta)}^h) + \frac{\Lambda^q}{q} |P_0 \nabla_x (u \circ l_{(\tau, \zeta)}^h) |^q = \left(\partial_t u + x^\top A^\top_h \nabla_x u + \frac{\Lambda^q}{q} |P_0 \nabla_x u|^q \right) \circ l_{(\tau, \zeta)}^h .
\ee

The group structure interacts with the space-time dilations $\widetilde D_r^\gamma$ in the following way.

\begin{lemma}
Let $\gamma, h \geq 0$. Then, for any $r>0$, the following hold:
\begin{enumerate}[(i)]
\item For all $(\tau , \zeta), (t,x) \in \R \times \R^N$,
\be \label{eq:LG_dil}
\widetilde D_r^\gamma(\tau , \zeta) \diamond_h \widetilde D_r^\gamma (t,x) = \widetilde D_r^\gamma \left[ (\tau , \zeta) \diamond_{hr} (t,x) \right] .
\ee
\item For all $(\tau , \zeta) \in \R \times \R^N$, the left translation satisfies
\be \label{eq:LT_dil}
 l_{\widetilde D_r^\gamma (\tau, \zeta)}^h \circ \widetilde D_r^\gamma = \widetilde D_r^\gamma \circ  l_{(\tau, \zeta)}^{hr } .
\ee
\item The inverse map satisfies
\be \label{eq:Inverse_dil}
\left ( \widetilde D_{r}^\gamma (t,x) \right )^{-1}_h = \widetilde D_{r}^\gamma \, (t,x)_{hr}^{-1}
\qquad \quad
\forall (t,x) \in \R \times \R^N .
\ee
\end{enumerate}

\end{lemma}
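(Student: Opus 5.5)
The whole lemma rests on one conjugation identity for the free flows, which is already implicit in Remark~\ref{rmk:CylinderTransformation}(i): for $r>0$, $h\geq 0$ and $t\in\R$,
\begin{equation}
D_r^{(\gamma)}\, e^{t A_{hr}} \,(D_r^{(\gamma)})^{-1} = e^{rt A_h}. \label{eq:conj_plan}
\end{equation}
We establish this first, and then obtain (i), (ii), (iii) by substituting into the definitions of $\diamond_h$, $l^h$ and the inverse.

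To prove \eqref{eq:conj_plan}, note that the scalar factor $r^\gamma$ in $D_r^{(\gamma)} = r^\gamma S(r)$ cancels under conjugation. It therefore suffices to show $S(r) A_{hr} S(r)^{-1} = r A_h$ and then exponentiate, since $S(r) e^{tM} S(r)^{-1} = e^{t S(r) M S(r)^{-1}}$.

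For $h>0$ we use $A_{hr} = hr\, S((hr)^{-1}) A S(hr)$ together with $S(a)S(b)=S(ab)$ (the $P_i$ are orthogonal projections onto an orthogonal decomposition). This gives
\[
S(r)A_{hr}S(r)^{-1} = hr\, S(h^{-1})AS(h) = rA_h,
\]
which is the computation already recorded in Remark~\ref{rmk:CylinderTransformation}(i). For $h=0$ the required identity is $S(r)A_0S(r)^{-1} = rA_0$, which is exactly \eqref{eq:ScalingPreservesA0}. Alternatively, the block-wise formula $A_h = \sum h^{j+1-i}P_iAP_j$ handles all $h\ge 0$ at once: conjugating by $S(r)$ multiplies $P_iAP_j$ by $r^{i-j}$, and $(hr)^{j+1-i} r^{i-j} = r\,h^{j+1-i}$. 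Getting this scaling bookkeeping right is the only real content of the lemma; everything after it is mechanical.

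For (i), expand the left side as
\[
\widetilde D_r(\tau,\zeta)\diamond_h \widetilde D_r(t,x) = \bigl(r\tau + rt,\; D_r x + e^{rtA_h} D_r\zeta\bigr).
\]
The right side is $\widetilde D_r(\tau+t,\, x + e^{tA_{hr}}\zeta) = \bigl(r(\tau+t),\; D_r x + D_r e^{tA_{hr}}\zeta\bigr)$. By \eqref{eq:conj_plan}, $D_r e^{tA_{hr}} = e^{rtA_h}D_r$, so the two sides agree.

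Statement (ii) is (i) read as an identity of maps in the variable $(t,x)$.

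For (iii), the left side is $\bigl(-rt,\; -e^{-rtA_h}D_r x\bigr)$. The right side is $\widetilde D_r(-t,\, -e^{-tA_{hr}}x) = \bigl(-rt,\; -D_r e^{-tA_{hr}}x\bigr)$, and these agree by \eqref{eq:conj_plan} with $t$ replaced by $-t$. As a consistency check, (iii) also follows from (i): both sides, when $\diamond_h$-multiplied with $\widetilde D_r(t,x)$, yield the identity, since $\widetilde D_r$ fixes $(0,0)$.
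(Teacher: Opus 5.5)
Your proof is correct and is essentially the paper's. The paper also proves (i) by expanding both sides and using $e^{rtA_h}D_r^\gamma = D_r^\gamma e^{t\,rS(r)^{-1}A_hS(r)}$ with $rS(r)^{-1}A_hS(r)=A_{hr}$ (your conjugation identity, written inline), and reads (ii) off from (i). The only differences are that for (iii) the paper uses what you call the consistency check, showing $\widetilde D_r^\gamma (t,x)^{-1}_{hr}\diamond_h \widetilde D_r^\gamma(t,x)=(0,0)$ via (i), rather than your direct computation, and that your separate treatment of $h=0$ spells out a case the paper leaves implicit.
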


\begin{proof}
We compute directly that
\begin{align} 
 \widetilde D_r^\gamma(\tau , \zeta) \diamond_h \widetilde D_r^\gamma (t,x)  & = (r (\tau + t) , D_r^\gamma x + e^{r t A_h} D_r^\gamma \zeta ) \\
& = (r (\tau + t) , D_r^\gamma x + D_r^\gamma e^{t r S(r)^{-1} A_h S(r)} \zeta ) \\
& = \widetilde D_r^\gamma (\tau + t ,  x +  e^{t A_{hr}} \zeta ) \\
& = \widetilde D_r^\gamma \left[ (\tau , \zeta) \diamond_{hr} (t,x) \right] .
\end{align}
Then, for all $(t,x) \in \R \times \R^N$,
\begin{align}
 l_{\widetilde D_r^\gamma (\tau, \zeta)}^h \circ \widetilde D_r^\gamma (t,x) & = \widetilde D_r^\gamma (\tau, \zeta) \diamond_h \widetilde D_r^\gamma (t,x) \\
& = \widetilde D_r^\gamma \left[ (\tau , \zeta) \diamond_{hr} (t,x) \right] \\
& = \widetilde D_r^\gamma l^{hr}_{(\tau , \zeta)}  (t,x) .
\end{align}
Finally, 
\begin{align}
(0,0) &= \widetilde D_{r}^\gamma (0,0) \\
&= \widetilde D_{r}^\gamma \left [ (t,x)_{hr}^{-1} \diamond_{hr} (t,x) \right ] \\
& =    \widetilde D_{r}^\gamma (t,x)_{hr}^{-1} \diamond_{h} \widetilde D_{r}^\gamma (t,x)  .
\end{align}
Thus
\be
 \left [ \widetilde D_{r}^\gamma (t,x) \right ]^{-1}_h = \widetilde D_{r}^\gamma (t,x)_{hr}^{-1} .
\ee
\end{proof}

\subsection{Representation of Trajectories for Small $h>0$}

We note the following representation of solutions for the controlled ODE
\be
\dot \eta = A_h \eta + \beta  ,
\ee
which is inspired by the methods of \cite{Seidman, Seidman-Yong}.
It will be useful for comparing the controlled problem for small $h > 0$ to the principal part case $h=0$.

\begin{lemma} \label{lem:eta_rep}
\begin{enumerate}[(i)]
\item Let $h \in \R$ and $r \in \R \setminus \{ 0\}$. Then, for all $\tau \in [0,1]$, the flow matrix $e^{r \tau A_h}$ can be represented in the form
\be
e^{r \tau A_h} 
= S(r) \left ( e^{\tau A_0} + R_A(\tau ; hr) \right ) S(r)^{-1}
\ee
where $S$ is defined in \eqref{def:S} and the matrix $R_A(\tau \, ; h)$ is defined for all $\tau \in [0,1]$ and $h \in \R$ by
\be \label{def:RA}
R_A(\tau \, ; h) : = \sum_{i = 0}^{\kappa}  \sum_{j=0}^\kappa \sum_{m=1 \vee (j-i)}^\infty   h^{m} \frac{ \tau^{m + i -j }}{(m + i - j) !}  P_{i} A^{m + i - j} P_j .
\ee

\item Let $s, t \in \R$ with $s \neq t$, and define the interval $I = (s \wedge t, s \vee t)$.
Let $\beta \in L^1\left ( I ; \R^N \right )$. If $\eta :  I \to \R^N$ solves the ODE 
\be
\dot \eta = A_h \eta + P_0 \beta,
\ee
then
\begin{align}
\eta(t) - e^{(t-s)A_h} \eta(s) & = (t-s) S(t-s) \int_0^1 \left ( e^{\tau A_0} + R_A(\tau \, ; h (t-s))\right )P_0 \hat \beta_\tau \dd \tau \\
\end{align}
where $\hat \beta : [0,1] \to \R^N$ is defined by
\be
\hat \beta_\tau  : = \beta_{t - \tau (t-s)}
\ee

If $\beta \in L^p(I;\R^N)$ (for $p \in [1, +\infty)$), then $\hat \beta \in L^{p}((0,1); \R^N)$, with
\be
\int_I |\beta_\tau |^{p}  \dd \tau = |t-s| \int_0^1 |\hat \beta_\tau |^{p} \dd \tau .
\ee
\end{enumerate}

\end{lemma}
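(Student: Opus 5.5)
Part (i) follows by conjugating with $S(r)$ and expanding the exponential as a power series. Part (ii) then follows from Duhamel's formula and a change of variables in time.

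\emph{Part (i).} By the identity $A_h = h S(h^{-1}) A S(h)$ (valid for $h\neq 0$; the case $h=0$ is handled by continuity or by direct inspection), we have $r A_h = S(r) A_{hr} S(r)^{-1}$. For $r \neq 0$, $hr\neq 0$ this follows from $S(r)S(h)=S(rh)$; the remaining degenerate cases reduce to $S(r)^{-1} A_0 S(r) = r A_0$, which is \eqref{eq:ScalingPreservesA0}. Hence
\be
e^{r\tau A_h} = S(r)\, e^{\tau A_{hr}}\, S(r)^{-1},
\ee
and it suffices to show $e^{\tau A_{k}} = e^{\tau A_0} + R_A(\tau;k)$ for $k = hr$.

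To see this, write $e^{\tau A_k} = S(k)^{-1} e^{k\tau A} S(k)$ for $k\neq0$ and expand
\be
e^{k\tau A} = \sum_{n\ge0} \frac{(k\tau)^n}{n!} A^n .
\ee
Sandwiching with projections gives
\be
P_i e^{\tau A_k} P_j = \sum_{n\ge0} k^{\,n-i+j}\,\frac{\tau^n}{n!}\, P_i A^n P_j .
\ee
Setting $n = m+i-j$, the coefficient is $k^m \tau^{m+i-j}/(m+i-j)!$. By the block structure \eqref{A_upper_triangular}, $A^n(E_j)\subset V_{j+n}$, so $P_iA^nP_j = 0$ whenever $i > j+n$, i.e. $m<0$. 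The $m=0$ terms ($n = j - i \geq 0$) involve $P_i A^{j-i} P_j$. Modulo $V_{i-1}$, only the principal part contributes to this product, so these terms equal $P_i A_0^{\,j-i} P_j\, \tau^{j-i}/(j-i)!$, and summing over $i,j$ they reconstruct $e^{\tau A_0}$ since $A_0$ is nilpotent. The remaining terms are those with $m \geq 1$ and $n = m+i-j \geq 0$, i.e. $m \geq 1\vee(j-i)$, which is exactly $R_A$. The identity extends to $k=0$ by continuity in $k$, since the series converges uniformly for $\tau\in[0,1]$ and bounded $k$.

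\emph{Verifying the $m=0$ terms (main obstacle).} I will check $P_iA^{n}P_j = P_iA_0^{n}P_j$ when $i = j+n$ (indexing as in \eqref{A_BlockForm}, where $A$ raises the level index by at most one). The argument is by induction on $n$. Write $A = A_0 + (A - A_0)$, where $A - A_0$ maps $E_j$ into $V_j$. In any product of $n$ factors, a factor of $A - A_0$ fails to raise the level, so the total gain is strictly less than $n$, and the product is killed by $P_{j+n}$. Only the pure $A_0^n$ term survives, which gives the claim.

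\emph{Part (ii).} Duhamel's formula gives
\be
\eta(t) - e^{(t-s)A_h}\eta(s) = \int_s^t e^{(t-\sigma)A_h} P_0 \beta_\sigma \, \dd\sigma .
\ee
I substitute $\sigma = t - \tau(t-s)$, so $\dd\sigma = -(t-s)\dd\tau$ with the orientation handled so that the result is
\be
(t-s)\int_0^1 e^{\tau(t-s)A_h} P_0 \hat\beta_\tau \, \dd\tau .
\ee
Applying (i) with $r = t-s$ gives
\be
e^{\tau(t-s)A_h} = S(t-s)\big(e^{\tau A_0} + R_A(\tau; h(t-s))\big) S(t-s)^{-1}.
\ee
Since $S(r)^{-1}P_0 = P_0$, the claimed formula follows. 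The $L^p$ identity is the same change of variables applied to $|\beta|^p$, using $|\dd\sigma| = |t-s|\,\dd\tau$.
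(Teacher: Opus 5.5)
Your proposal is correct and follows essentially the same route as the paper: conjugate by $S(r)$ to reduce to $e^{\tau A_k} = S(k)^{-1}e^{k\tau A}S(k)$, expand the exponential, split off the $m=0$ terms to recover $e^{\tau A_0}$ with the remaining terms forming $R_A$, and then use Duhamel's formula with the change of variables $\sigma = t-\tau(t-s)$ for (ii). Your filtration argument for $P_{j+n}A^nP_j = A_0^nP_j$ supplies a step the paper only asserts; note two harmless slips, namely that the $m=0$ terms have $n = i-j$ (not $j-i$), and the scaling identity is $S(r)A_0S(r)^{-1} = rA_0$ (not $S(r)^{-1}A_0S(r) = rA_0$).
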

\begin{proof}

(i) Since $r S(r)^{-1} A_h S(r) = A_{hr}$ for all $r \neq 0$, $ S(r)  e^{r \tau A_h}  S(r)^{-1} =  e^{\tau A_{h r}} $. It therefore suffices to show that 
\be
e^{\tau A_h} 
= e^{\tau A_0} + R_A(\tau ; h) \qquad \forall h \in \R , \, \tau \in [0,1] ,
\ee
For $h = 0$ there is nothing to prove. Otherwise, since $A_h = h S(h)^{-1} A S(h)$, $e^{\tau A_h} = S(h)^{-1} e^{h \tau A} S(h) $.

Expanding the identity map into a sum of projections $I = \sum_{i=0}^\kappa P_i$, and the exponential $e^{h \tau A}$ into its power series, we find that
\be
S(h)^{-1} e^{h \tau A} S(h) = \sum_{i=0}^\kappa \sum_{j=0}^\kappa \sum_{l=0}^\infty \frac{ \tau^l}{l !}  h^{j+ l-i} P_i A^l P_j .
\ee
Recall that $\Image(A^l E_j) \subseteq \Image(A^{l+j} E_0) \subseteq V_{l+j}$, and hence $P_i A^l P_j = 0$ if $i > j + l$. Separating out the terms where $i = j+l$, we obtain
\be
S(h)^{-1} e^{h \tau A} S(h)  =   \sum_{l=0}^\kappa  \sum_{j=0}^{\kappa - l} \frac{ \tau^l}{l !} P_{j+l} A^l P_j  + \sum_{j=0}^\kappa \sum_{l=0}^\infty \sum_{i = 0}^{\kappa \wedge( j+l - 1)} \frac{ \tau^l}{l !}  h^{j+l - i} P_{i} A^l P_j .
\ee
Then, using the substitution $m = l + j - i$, we find
\be
S(h)^{-1} e^{h \tau A} S(h)  =   \sum_{l=0}^\kappa  \sum_{j=0}^{\kappa - l} \frac{ \tau^l}{l !} P_{j+l} A^l P_j  +\sum_{i = 0}^{\kappa}  \sum_{j=0}^\kappa \sum_{m=1 \vee (j-i)}^\infty   h^{m} \frac{ \tau^{m + i -j }}{(m + i - j) !}  P_{i} A^{m + i - j} P_j .
\ee

Finally, since $A_0 = \sum_{j=0}^{\kappa - 1} P_{j+1} A P_j$, we have, for all integers $l \geq \kappa$,
\be
A_0^l = \left ( \sum_{j=0}^{\kappa - 1} P_{j+1} A P_j \right)^l = \sum_{j=0}^{\kappa - l} P_{j+l} A^l P_j 
\ee
and $A_0^{\kappa + 1} =  \mb{O}_{N\times N}$.
Hence
\be
e^{\tau A_0} = \sum_{l=0}^\kappa \frac{\tau^\kappa}{l !} \sum_{j=0}^{\kappa - l} P_{j+l} A^l P_j ,
\ee
and we identify that indeed
\be
S(h)^{-1} e^{h \tau A} S(h)  = e^{\tau A_0} + R_A(\tau ; h) .
\ee

\noindent (ii) By considering the function $\tau \mapsto e^{-(\tau - s)A_h} \eta(\tau)$, which satisfies
\be
\frac{\dd}{\dd \tau} \left ( e^{-(\tau - s)A_h} \eta(\tau) \right ) = e^{-(\tau - s)A_h} P_0 \beta_\tau ,
\ee
we obtain the following expression for $\eta$:
\be
\eta(t) = e^{(t-s)A_h} \eta(s) + \int_s^t e^{(t-\tau) A_h } P_0 \beta_\tau \dd \tau .
\ee
We make the substitution $\beta_\tau  = \hat \beta_{\frac{t - \tau}{t - s}}$, and change variable $\tau ' = \frac{t - \tau}{t - s}$ to find that
\begin{align}
\eta(t) & = e^{(t-s)A_h} \eta(s) + \int_s^t e^{(t-\tau) A_h } P_0 \hat \beta_{\frac{t - \tau}{t - s}} \dd \tau \\
& = e^{(t-s)A_h} \eta(s) + (t - s) \int_0^1 e^{(t-s) \tau '  A_h } P_0 \hat \beta_{\tau ' } \dd \tau ' . \label{eq:eta_rep_1}
\end{align}
Using part (i) we have, for all $\tau \in [0,1]$,
\be
e^{(t-s) \tau A_h} = S(t-s) \left [ e^{\tau A_0} + R_A(\tau ; h (t-s) ) \right ] S(t-s)^{-1} .
\ee
We substitute this into \eqref{eq:eta_rep_1} to obtain
\begin{multline}
\eta(t) = S(t-s) \left [ e^{A_0} + R_A(1 ; h (t-s) ) \right ] S(t-s)^{-1} \eta(s) 
+ (t - s) S(t-s) \int_0^1 \left [ e^{\tau A_0} + R_A(\tau ; h (t-s) ) \right ] P_0 \hat \beta_{\tau } \dd \tau ,
\end{multline}
where we have used that $S(h)^{-1}P_0 = P_0$ for all $h \in \R  \setminus \{ 0 \}$.

\end{proof}

\subsection{Optimal Controls} \label{sec:OptControls}

In this section we look at controls that are chosen to be \emph{optimal} for a certain minimisation problem.
Let $q' = \frac{q}{q -1}$ denote the H\"older conjugate exponent of $q$.

\begin{problem} \label{prob:qopt}
Given $s \leq t$, $h \ge 0$ and $y,x \in \R^N$,
minimise
\be
\frac{1}{q'} \int_s^t |\beta_\tau|^{q '} \dd \tau \quad {\rm{subject\ to}} \quad \exists \eta: [s,t] \to \R^N : \; \;  \eta(s) = y, \; \eta(t) = x, \; \dot \eta = A_h \eta + P_0 \beta .
\ee
\end{problem}

This is a strictly convex minimisation problem with an affine constraint. The Kalman rank condition \eqref{hyp:Kalman} (imposed throughout the paper) ensures that there is at least one admissible $\beta$ satisfying the constraint, and hence there exists a unique optimal $\beta^\ast \in L^{q'}(s,t)$ for any $h \geq 0$, $s < t$ and $y,x \in \R^N$. 

Let $J_h(s,t; y,x)$ denote the value of Problem~\ref{prob:qopt}:
\begin{align} \label{def:Jr}
J_h(s,t; y,x) & := \inf \left\{\frac{1}{q'} \int_s^t |\beta_\tau|^{q '} \dd \tau \; : \; \exists \eta: [s,t] \to \R^N : \; \;  \eta(s) = y, \; \eta(t) = x, \; \dot \eta = A_h \eta + P_0 \beta \right\} \\
& = \frac{1}{q'} \int_s^t |\beta_\tau^\ast|^{q '} \dd \tau 
\end{align}

Observe from Lemma~\ref{lem:eta_rep} that
\be
 J_h(s,t; y,x) = J_h(0, t-s, 0, x - e^{(t-s) A_h} y) = : \widetilde J_h(t-s, x - e^{(t-s) A_h} y ),
\ee

The asymptotics of $\beta^\ast$ for general $h\geq 0$ and small time intervals $|t-s|$ were studied in \cite{Seidman, Seidman-Yong}.
For example, for a fixed $h \geq 0$, the following is a consequence of \cite[Theorem 1, Lemma 2.2]{Seidman-Yong}.

\begin{thm} \label{thm:Seidman-Yong}
For each $k = 0, 1, \ldots, \kappa$, there exists a norm $\| \cdot \|_k$ on the subspace $V_k$ such that the following holds. For all $\xi \in \R^N$,
\be
\lim_{t \to 0} t^{q'(k+1) - 1} \widetilde J_r(t, \xi) = \| P_k \xi \|_k^{q'}, \qquad k : = \min \{ j \in \{ 0, \ldots, \kappa \} : \xi \in V_j \}. 
\ee
\end{thm}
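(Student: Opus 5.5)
The plan is to reduce the statement, via Lemma~\ref{lem:eta_rep}, to a family of minimal-cost problems on the fixed interval $[0,1]$ that depend continuously on a small parameter, and then pass to the limit. Write $h \geq 0$ for the fixed parameter (the $r$ in $\widetilde J_r$). Take $s=0$, $y=0$ and $t>0$ small in Lemma~\ref{lem:eta_rep}(ii). A control $\beta$ on $[0,t]$ steers $0$ to $\xi$ exactly when
\be
\xi = t\,S(t)\int_0^1 \big(e^{\tau A_0}+R_A(\tau\,;ht)\big)P_0\hat\beta_\tau \dd\tau ,
\ee
and its cost is $\frac{t}{q'}\int_0^1|\hat\beta_\tau|^{q'}\dd\tau$. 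For $\e \geq 0$ define the bounded linear operator $L_\e : L^{q'}((0,1);\R^N)\to\R^N$ by
\be
L_\e\hat\beta := \int_0^1 \big(e^{\tau A_0}+R_A(\tau\,;\e)\big)P_0\hat\beta_\tau\dd\tau ,
\ee
and set $\Phi_\e(z) := \inf\{\frac1{q'}\int_0^1|\hat\beta|^{q'} : L_\e\hat\beta = z\}$.

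Now let $k$ be minimal with $\xi\in V_k$. Then $t^{-1}S(t)^{-1}\xi = \sum_{j\le k} t^{-1-j}P_j\xi$. Since $\Phi_\e$ is positively homogeneous of degree $q'$, multiplying the constraint by $t^{k+1}$ gives
\be
t^{q'(k+1)-1}\,\widetilde J_h(t,\xi) = \Phi_{ht}(z_t), \qquad z_t := \sum_{j=0}^k t^{k-j}P_j\xi \;\longrightarrow\; P_k\xi \quad (t\to 0).
\ee
So it suffices to show that $\Phi_0$ is $q'$-homogeneous, finite and positive definite, and that $\Phi_\e(z_\e)\to\Phi_0(z)$ whenever $\e\to 0$ and $z_\e\to z$.

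\textbf{Properties of $\Phi_0$.} The operator $L_0$ is surjective, because $(A_0,P_0)$ satisfies the Kalman rank condition: its upper off-diagonal blocks $A^{(j,j-1)}$ have full rank, so $\Image(A_0^jP_0)=E_j$. Since $L_0$ is surjective, $\Phi_0(z)^{1/q'}$ is the quotient norm of $L^{q'}$ modulo $\ker L_0$, up to a constant factor. This is a genuine norm on $\R^N$. I define $\|\cdot\|_k$ as its restriction to $V_k$, rescaled by $q'^{-1/q'}$ so that $\Phi_0(z)$ equals the $q'$-th power of this norm. (The norms for different $k$ then coincide up to restriction, which is consistent with the statement.)

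\textbf{Convergence.} From \eqref{def:RA}, $\sup_{\tau\in[0,1]}\|R_A(\tau\,;\e)\|\le C\e$ for $\e\le1$, so $\|L_\e-L_0\|\le C\e$.

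For the upper bound, fix a miniser $\beta_0$ for $\Phi_0(z)$. Surjectivity of $L_0$ is an open condition, so for small $\e$ the Gramian $L_\e L_\e^*$ (built with the $q'$-duality map, or simply using the $L^2$ right inverse of $L_\e$ restricted to smooth controls) has a right inverse $M_\e$ with uniformly bounded norm. Set $\beta_\e = \beta_0 + M_\e\big(z_\e - L_\e\beta_0\big)$. The correction tends to $0$ in $L^{q'}$, so $\limsup\Phi_\e(z_\e)\le\Phi_0(z)$.

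For the lower bound, the minimisers $\beta_\e$ are bounded in $L^{q'}$ by the upper bound. Extract a subsequence converging weakly to some $\bar\beta$. Because $L_\e\to L_0$ in operator norm, $L_0\bar\beta = \lim L_\e\beta_\e = z$. Weak lower semicontinuity of the $L^{q'}$ norm then gives $\liminf\Phi_\e(z_\e)\ge\Phi_0(z)$.

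The main obstacle is the uniform-in-$\e$ right inverse used in the upper bound. This is where the Kalman condition for the principal part and the $O(\e)$ estimate on $R_A$ must be combined. I would handle it through the finite-dimensional controllability Gramian $G_\e=\int_0^1(e^{\tau A_0}+R_A)P_0(e^{\tau A_0}+R_A)^\top\dd\tau$. It is positive definite at $\e=0$ and depends continuously on $\e$, hence it is uniformly invertible for small $\e$. The right inverse $M_\e w := (e^{\tau A_0}+R_A)^\top G_\e^{-1}w$ is then bounded into $L^\infty\subset L^{q'}$ uniformly for small $\e$.
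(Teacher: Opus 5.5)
Your argument is correct. The paper does not prove this statement itself: it quotes it from \cite{Seidman-Yong} and then proves only the uniform two-sided version, Proposition~\ref{prop:tstar}, via Lemma~\ref{lem:Jtransform} and Lemma~\ref{lem:norms}.

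Your proof is a self-contained replacement built from the same two ingredients. The first is the rescaling identity $\widetilde J_h(t;\xi)=t\,\hat J_{ht}(t^{-1}S(t)^{-1}\xi)$; your $\Phi_\e$ is exactly $\hat J_\e$. The second is the quotient norm induced by the principal-part operator $\mathscr G=L_0$.

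What you add is the homogeneity step $t^{q'(k+1)-1}\widetilde J_h(t,\xi)=\hat J_{ht}(z_t)$ with $z_t\to P_k\xi$. This converts the limit into joint continuity of $(\e,z)\mapsto\hat J_\e(z)$ at $(0,P_k\xi)$. You establish that continuity with a recovery-sequence upper bound and a weak-compactness plus lower-semicontinuity lower bound.

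The perturbation step is handled differently. The paper factors $\mathscr G+\mathscr R_h=\mathscr G(I+\mathscr H\mathscr R_h)$, with an abstract right inverse $\mathscr H$ taken from Seidman--Yong, and inverts the second factor by a Neumann series. Your explicit Gramian right inverse $M_\e$ does the same job and makes the dependence on the Kalman condition for $(A_0,P_0)$ transparent. Since the Gramian construction suffices on its own, you can drop the vague parenthetical about the $q'$-duality map.

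As for what each route buys: the paper's gives constants uniform over all $ht\le h_\ast$, which the later cylinder estimates need, but only two-sided bounds. Yours identifies the exact limit $\hat J_0(P_k\xi)$ for each fixed $h$. Combined with compactness of the unit sphere and $q'$-homogeneity, your continuity at $\e=0$ would also recover the uniform bounds of Lemma~\ref{lem:norms} for small $\e$.
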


By scrutinising the proof of this result in \cite{Seidman-Yong}, we will be able to obtain the following estimate, in which the constants are \emph{uniform} for all sufficiently small $h$.

\begin{prop} \label{prop:tstar}
There exists $h_\ast > 0$ and a constant $C_q > 0$, depending on $A$ and $q$ only, such that for all $h \geq 0$ and $t > 0$ such that $h t \leq h_\ast$,
\be
\frac{1}{C_{q} } \, t^{- q'/q} | S(t)^{-1} \xi |^{q'} \leq \widetilde J_h(t; \xi) \leq C_q \, t^{- q'/q} | S(t)^{-1} \xi |^{q'}  \qquad \text{for all} \; \xi \in \R^N .
\ee
Consequently, for all $h \geq 0$, $s < t$ such that $h(t-s) \leq h_\ast$, and all $y,x \in \R^N$,
\be
\frac{1}{C_{q}}  (t-s)^{- q'/q} \left| S(t-s)^{-1} \left ( x - e^{(t-s) A_h} y \right ) \right|^{q'} \leq J_h(s, t; y, x) \leq C_q \, (t-s)^{- q'/q} \left| S(t-s)^{-1} \left ( x - e^{(t-s) A_h} y \right ) \right|^{q'}  .
\ee
\end{prop}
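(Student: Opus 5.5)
The plan is to reduce everything to a unit time interval via Lemma~\ref{lem:eta_rep}, and then use compactness in the parameter $hr$ to get uniform constants.

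\emph{Step 1 (rescaling to unit time).} Fix $t>0$ and $\xi$, and let $\beta$ be admissible for $\widetilde J_h(t;\xi)$, i.e.\ a control on $[0,t]$ steering $0$ to $\xi$. By Lemma~\ref{lem:eta_rep}(ii) with $s=0$,
\[
\xi = t\, S(t)\, L_{ht}\hat\beta, \qquad L_{\theta}\hat\beta := \int_0^1 \left(e^{\tau A_0}+R_A(\tau;\theta)\right)P_0\hat\beta_\tau \dd\tau,
\]
and $\int_0^t|\beta|^{q'} = t\int_0^1|\hat\beta|^{q'}$. Set $\zeta := t^{-1}S(t)^{-1}\xi$. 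Then
\[
\widetilde J_h(t;\xi) = t\, \Phi_{ht}(\zeta), \qquad \Phi_\theta(\zeta) := \inf\left\{\frac1{q'}\int_0^1|\hat\beta|^{q'} : L_\theta\hat\beta=\zeta\right\}.
\]
Since $L_\theta$ is linear, $\Phi_\theta$ is positively $q'$-homogeneous, so $\Phi_\theta(\zeta)=|\zeta|^{q'}\Phi_\theta(\zeta/|\zeta|)$. Hence
\[
\widetilde J_h(t;\xi) = t^{1-q'}\,|S(t)^{-1}\xi|^{q'}\,\Phi_{ht}(e),
\]
with $e$ a unit vector, and $1-q'=-q'/q$, which is exactly the claimed power of $t$. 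It therefore remains to show that $\Phi_\theta(e)$ is bounded above and below uniformly for $e\in\mathbb{S}^{N-1}$ and $\theta\in[0,h_\ast]$.

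\emph{Step 2 (the case $\theta=0$).} The operator $L_0:L^{q'}(0,1)\to\R^N$ is surjective. Indeed, $e^{\tau A_0}P_0$ generates $\R^N$ because the principal part $A_0$ with $P_0$ satisfies the Kalman condition: its off-diagonal blocks $A^{(j,j-1)}$ have full rank $n_j$, so $A_0^jE_0=E_j$. Equivalently, the Gramian $G_0=\int_0^1 e^{\tau A_0}P_0e^{\tau A_0^\top}\dd\tau$ is positive definite.

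\emph{Step 3 (uniformity in $\theta$).} This is the main point. From \eqref{def:RA} every term of $R_A$ carries a factor $\theta^m$ with $m\ge1$, and the series converges like an exponential. Hence $\|R_A(\tau;\theta)\|\le C\theta$ for $\tau\in[0,1]$ and $\theta\le1$, so $\|L_\theta-L_0\|_{L^{q'}\to\R^N}\le C\theta$.

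\emph{Upper bound.} Take the explicit control $\hat\beta_\tau=P_0e^{\tau A_0^\top}G_\theta^{-1}e$, with Gramian $G_\theta=\int_0^1(e^{\tau A_0}+R_A)P_0(e^{\tau A_0}+R_A)^\top\dd\tau$. Then $G_\theta\to G_0$ as $\theta\to0$, so for $\theta\le h_\ast$ small, $G_\theta\succeq\frac12G_0$. The corrected control $\hat\beta_\tau=P_0(e^{\tau A_0}+R_A)^\top G_\theta^{-1}e$ is admissible and has $L^{q'}$ norm bounded uniformly, giving $\Phi_\theta(e)\le C$.

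\emph{Lower bound.} Since $|L_\theta\hat\beta|\le \sup_\tau\|e^{\tau A_0}+R_A\|\,\|\hat\beta\|_{L^1}\le C\|\hat\beta\|_{L^{q'}}$, any admissible $\hat\beta$ satisfies $1=|e|\le C\|\hat\beta\|_{L^{q'}}$. This gives $\Phi_\theta(e)\ge c>0$.

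Both constants depend only on $A$ and $q$, and so does $h_\ast$, which is chosen so that the Gramian perturbation is small.

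\emph{Step 4 (general endpoints).} The bounds for $J_h(s,t;y,x)$ follow from the identity $J_h(s,t;y,x)=\widetilde J_h(t-s,x-e^{(t-s)A_h}y)$ noted just before Theorem~\ref{thm:Seidman-Yong}.

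The only delicate point is the uniform invertibility of the Gramian for small $\theta$ (Step 3). I expect this to be routine once $\|R_A\|\le C\theta$ is established from the series \eqref{def:RA}.
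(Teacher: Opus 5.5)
Your proposal is correct, but it handles the key uniformity step differently from the paper.

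\textbf{Shared part.} Both arguments use the same unit-time reduction $\widetilde J_h(t;\xi)=t\,\hat J_{ht}(t^{-1}S(t)^{-1}\xi)$ (the paper's Lemma~\ref{lem:Jtransform}, your Step 1). Both also use the same observation that every term of $R_A(\tau;\theta)$ carries a factor $\theta^m$ with $m\ge 1$.

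\textbf{The paper's route.} It works with operators on $L^{q'}(0,1)$. It takes a right inverse $\mathscr{H}$ of $\mathscr{G}=L_0$, quoted from Seidman--Yong, and factors the constraint as $\xi=\mathscr{G}(I+\mathscr{H}\mathscr{R}_h)\hat\beta$. It inverts $I+\mathscr{H}\mathscr{R}_h$ by a Neumann series once $\|\mathscr{R}_h\|$ is small, and changes variables $\tilde\beta=(I+\mathscr{H}\mathscr{R}_h)\hat\beta$ to compare with the $h=0$ problem. It concludes via the quotient norm $\|\cdot\|_{\mathscr{G}}$, which is equivalent to the Euclidean norm because $\R^N$ is finite-dimensional.

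\textbf{Your route.} You use $q'$-homogeneity to reduce to unit vectors. You get the lower bound from the mere boundedness of $L_\theta$, which needs no invertibility or smallness of $\theta$. You get the upper bound from an explicit control built from the perturbed Gramian $G_\theta$, so the only perturbation argument concerns an $N\times N$ matrix.

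\textbf{Comparison.} Your argument is more elementary and self-contained. It does not need the right inverse from Seidman--Yong, and it shows clearly that only the upper bound requires $\theta\le h_\ast$. The paper's factorisation has the advantage of being the same template it reuses for the curved trajectories in Proposition~\ref{prop:flows} and Lemma~\ref{lem:R_curved}.

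\textbf{Two small points.}
\begin{itemize}
\item The first control you write, $P_0e^{\tau A_0^\top}G_\theta^{-1}e$, is not admissible. Only the corrected control $P_0(e^{\tau A_0}+R_A)^\top G_\theta^{-1}e$ is.
\item Your bound $\|R_A(\tau;\theta)\|\le C\theta$ needs $\theta$ bounded, so take $h_\ast\le 1$.
\end{itemize}
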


To prove Proposition~\ref{prop:tstar}, we will first reduce the problem using the transformation in Lemma~\ref{lem:eta_rep}.

\begin{lemma} \label{lem:Jtransform}
For any $\xi \in \R^N$ and $h \geq 0$, let
\be
\hat J_h(\xi) : = \inf \left \{ \frac{1}{q'} \int_0^1 |\hat \beta_\tau|^{q '} \dd \tau \; :    \; \hat\beta\in L^{q'}(0,1)\ \ {\rm{and}}\ \ \xi =    \int_0^{1} e^{\tau A_0} P_0  \hat{\beta}_{\tau} \dd \tau  +   \int_0^{1} R_A(\tau ; h ) P_0  \hat{\beta}_{\tau} \dd \tau \right \}.
\ee
We note that as a consequence of the imposed Kalman rank condition, the value of $\hat J_{h}(\xi)$ is finite, since there exists at least one continuous control $\beta : [0,1] \to \R^N$ such that the constraint is satisfied \cite[Proposition 1.1, Theorem 1.2]{Zabczyk}.

Then, for any $h \geq 0$, $t > 0$ and $\xi \in \R^N$, 
\be
\widetilde J_h(t ; \xi) = t \hat J_{h t} \left (t^{-1} S(t)^{-1} \xi \right ).
\ee

\end{lemma}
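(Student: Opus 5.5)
The plan is to establish a bijection between the admissible controls of the two minimisation problems, using the representation in Lemma~\ref{lem:eta_rep}(ii) with $s=0$, $y=0$, and to check that the costs differ by the factor $t$. Fix $h\ge0$, $t>0$ and $\xi\in\R^N$. By definition, $\widetilde J_h(t;\xi)=J_h(0,t;0,\xi)$ is the infimum of $\frac{1}{q'}\int_0^t|\beta_\tau|^{q'}\dd\tau$ over $\beta\in L^{q'}(0,t)$ for which the trajectory $\dot\eta=A_h\eta+P_0\beta$ with $\eta(0)=0$ satisfies $\eta(t)=\xi$.

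\textbf{Step 1: rewrite the constraint.} Let $\beta$ be admissible and set $\hat\beta_\tau:=\beta_{t-\tau t}$ for $\tau\in[0,1]$. Lemma~\ref{lem:eta_rep}(ii) with $s=0$ and $\eta(0)=0$ gives
\be
\xi=\eta(t)=t\,S(t)\int_0^1\left(e^{\tau A_0}+R_A(\tau;ht)\right)P_0\hat\beta_\tau\dd\tau .
\ee
Since $S(t)$ is invertible for $t>0$, this is equivalent to
\be
t^{-1}S(t)^{-1}\xi=\int_0^1 e^{\tau A_0}P_0\hat\beta_\tau\dd\tau+\int_0^1R_A(\tau;ht)P_0\hat\beta_\tau\dd\tau ,
\ee
which is exactly the constraint in the definition of $\hat J_{ht}$ at the point $t^{-1}S(t)^{-1}\xi$. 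The step to watch here is the appearance of $S(t)^{-1}$ in front of $\eta(s)$ in the final display of the proof of Lemma~\ref{lem:eta_rep}(ii). That term carries the factor $\eta(s)=0$, so it drops out. We also use $S(t)^{-1}P_0=P_0$.

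\textbf{Step 2: transform the cost.} The last statement of Lemma~\ref{lem:eta_rep}(ii) gives $\int_0^t|\beta_\tau|^{q'}\dd\tau=t\int_0^1|\hat\beta_\tau|^{q'}\dd\tau$. Hence the cost of $\beta$ in the problem defining $\widetilde J_h(t;\xi)$ equals $t$ times the cost of $\hat\beta$ in the problem defining $\hat J_{ht}(t^{-1}S(t)^{-1}\xi)$.

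\textbf{Step 3: conclude.} The map $\beta\mapsto\hat\beta$ is a linear bijection from $L^{q'}(0,t)$ onto $L^{q'}(0,1)$, with inverse $\beta_\sigma=\hat\beta_{(t-\sigma)/t}$. For any $\hat\beta$ satisfying the constraint of $\hat J_{ht}$, the corresponding $\beta$ generates via Duhamel's formula a trajectory from $0$ to $\xi$, because the equivalence in Step 1 runs in both directions. Admissible sets therefore correspond exactly, with costs related by the factor $t$. Taking infima on both sides yields $\widetilde J_h(t;\xi)=t\,\hat J_{ht}(t^{-1}S(t)^{-1}\xi)$.

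The argument is essentially a change of variables, so the only potential obstacle is the bookkeeping in Step 1. One point needs checking: the series defining $R_A$ converges for every real $h$ and $\tau\in[0,1]$, being dominated by an exponential series. This justifies using the formula in Lemma~\ref{lem:eta_rep}(i) with the argument $ht$, which is arbitrary in size.
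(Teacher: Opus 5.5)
Your proposal is correct and follows essentially the same route as the paper. You apply Lemma~\ref{lem:eta_rep}(ii) with $s=0$ and $\eta(0)=0$ to turn the endpoint constraint into the constraint defining $\hat J_{ht}$ at $t^{-1}S(t)^{-1}\xi$, scale the cost by $t$ under $\beta\mapsto\hat\beta$, and use invertibility of this change of variables to identify the two infima; your explicit two-way check on the admissible sets is what the paper's one-line remark that ``the transformation $\beta\mapsto\hat\beta$ is invertible'' abbreviates.
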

\begin{proof}
By Lemma~\ref{lem:eta_rep}, if $\dot \eta = A_h \eta + P_0 \beta$ with $ \eta(0) = 0$ and $\eta(t) = \xi$, then
\be
t^{-1} S(t)^{-1} \xi =    \int_0^{1} \left ( e^{\tau A_0} + R_A(\tau \, ; h t)  \right )P_0 \, \hat{\beta}_{\tau} \dd \tau 
\ee
where $\hat \beta_\tau  : = \beta_{t(1  - \tau)}$, and
\be
\frac{1}{q'} \int_0^t |\beta_\tau|^{q'} \dd \tau = t \; \frac{1}{q'} \int_0^1 |\hat \beta_\tau|^{q '} \dd \tau .
\ee
Since the transformation $\beta \mapsto \hat \beta$ is invertible, this completes the proof.
\end{proof}

\begin{lemma}\label{lem:norms}
There exists $C_q > 0$ (depending on $q$, $A_0$ and $P_0$ only) and $h_\ast > 0$ such that, for all $h \leq h_\ast$ and all $\xi\in\R^{N}$, $\hat J_{h}(\xi)$ is finite and we have
\be
\frac{1}{C_q} |\xi|^{q'} \leq \hat J_{h}(\xi) \leq C_q |\xi|^{q'} .
\ee
\end{lemma}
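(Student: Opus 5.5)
The plan is to view $\hat J_h$ as the minimal-norm problem for the bounded linear operator
\be
L_h : L^{q'}((0,1);\R^N) \to \R^N, \qquad L_h \hat\beta := \int_0^1 \left( e^{\tau A_0} + R_A(\tau;h)\right) P_0 \hat\beta_\tau \dd \tau ,
\ee
and to obtain both bounds from two properties: (a) $\|L_h\|$ is bounded uniformly for $h \le h_\ast$, and (b) $L_h$ has a right inverse $G_h : \R^N \to L^{q'}$ whose norm is bounded uniformly for $h \le h_\ast$. Given these, the lower bound follows from $|\xi| = |L_h\hat\beta| \le \|L_h\| \, \|\hat\beta\|_{L^{q'}}$ for every admissible $\hat\beta$, so that $\hat J_h(\xi) \ge \frac{1}{q'}\|L_h\|^{-q'}|\xi|^{q'}$. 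The upper bound, together with finiteness, follows by taking $\hat\beta = G_h\xi$, which gives $\hat J_h(\xi) \le \frac{1}{q'}\|G_h\|^{q'}|\xi|^{q'}$.

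Property (a) comes from the series \eqref{def:RA}. For $\tau\in[0,1]$ and $|h|\le 1$ every term carries a factor $h^m$ with $m\ge 1$, so $\|R_A(\tau;h)\| \le C|h| \sum_{l} \|A\|^l/l! \le C|h| e^{\|A\|}$. Hence $\sup_{\tau}\|R_A(\tau;h)\| \le C_A h$, and $\|L_h\| \le e^{\|A_0\|} + C_A h$.

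For (b), first handle $h=0$ using the controllability Gramian. Let $G_0 := \int_0^1 e^{\tau A_0}P_0 e^{\tau A_0^\top}\dd\tau$. The pair $(A_0,P_0)$ satisfies the Kalman condition: by the block structure, $\Image(A_0^k P_0) = E_k$ because the blocks $A^{(j,j-1)}$ have full rank $n_j$. Therefore $G_0$ is positive definite. Set $(G_0^\dagger\xi)_\tau := P_0 e^{\tau A_0^\top} G_0^{-1}\xi$. This is a bounded continuous control with $L_0 G_0^\dagger \xi = \xi$ and $\|G_0^\dagger\xi\|_{L^\infty} \le C|\xi|$, where $C$ depends only on $A_0,P_0$. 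For $h>0$, use a perturbation argument. The matrix $M_h := L_h G_0^\dagger = I + \int_0^1 R_A(\tau;h)P_0 P_0 e^{\tau A_0^\top}G_0^{-1}\dd\tau$ satisfies $\|M_h - I\| \le C h$. Choose $h_\ast$ so that $C h_\ast \le 1/2$; then $M_h$ is invertible with $\|M_h^{-1}\|\le 2$. Define $G_h := G_0^\dagger M_h^{-1}$. Then $L_h G_h = I$ and $\|G_h\xi\|_{L^{q'}} \le \|G_h\xi\|_{L^\infty} \le 2C|\xi|$. This yields the constants depending only on $q$, $A_0$, $P_0$, and on $A$ through $h_\ast$ and the bound on $R_A$.

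The only genuinely delicate point is verifying that $(A_0,P_0)$ is itself controllable, so that $G_0$ is invertible, and keeping every constant independent of $h$. Both are settled by the full-rank property of the off-diagonal blocks in \eqref{A_BlockForm} and by the factor $h^m$, $m\ge1$, in $R_A$.
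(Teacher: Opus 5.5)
Your proof is correct, and it takes a genuinely different and more elementary route than the paper's.

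In the paper's notation, your $L_h$ is $\mathscr{G}+\mathscr{R}_h$. Your Gramian control $G_0^\dagger$ is an explicit choice of the right inverse $\mathscr{H}$, which the paper cites from Seidman--Yong instead of constructing from the controllability of $(A_0,P_0)$.

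The paper then perturbs on the infinite-dimensional side. It factors the constraint as $\xi=\mathscr{G}(I+\mathscr{H}\mathscr{R}_h)\hat\beta$ and inverts $I+\mathscr{H}\mathscr{R}_h$ on $L^{q'}(0,1)$ by a Neumann series. It then changes variables to $\tilde\beta=(I+\mathscr{H}\mathscr{R}_h)\hat\beta$. This shows $\hat J_h(\xi)$ is uniformly comparable to $\|\xi\|_{\mathscr{G}}^{q'}$, the quotient norm induced by the $h=0$ operator, which is equivalent to $|\xi|$ by finite-dimensionality.

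You instead perturb on the finite-dimensional side, inverting the $N\times N$ matrix $M_h=I+\mathscr{R}_h\mathscr{H}$. You also obtain the two bounds separately:
\begin{itemize}
\item the lower bound from nothing more than the uniform boundedness of $L_h$;
\item the upper bound from the explicit, even $L^\infty$-bounded, right inverse $G_0^\dagger M_h^{-1}$.
\end{itemize}
This avoids the quotient space and the change of variables entirely and makes every constant explicit. What the paper's route gives in addition is the structural fact that the cost for small $h$ is uniformly equivalent to the $h=0$ cost, in the spirit of Seidman--Yong.

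One small point is worth stating explicitly. Take $h_\ast\le 1$ so small that $\sup_{\tau\in[0,1]}\|R_A(\tau;h)\|\le \bigl(2e^{\|A_0\|}\|G_0^{-1}\|\bigr)^{-1}$ for $h\le h_\ast$. Then $\|M_h-I\|\le 1/2$, and $\|L_h\|\le e^{\|A_0\|}+\bigl(2e^{\|A_0\|}\|G_0^{-1}\|\bigr)^{-1}$. Hence both constants depend only on $q$, $A_0$ and $P_0$, with $A$ entering only through $h_\ast$, exactly as the statement requires.
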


\begin{proof}

Define the operators $\mathscr{G},\mathscr{R}_h : L^{q'}(0,1) \to \RR^N$ by
\be
\mathscr{G} \zeta : = \int_0^1 e^{\tau A_0} P_0 \, \zeta(\tau) \dd \tau , 
\quad \mathscr{R}_h \zeta  : = \int_0^1 R_A (\tau ; h) \, P_0 \zeta(\tau) \dd \tau, 
\qquad \text{for all } \; \zeta \in L^{q'}(0,1).
\ee
Then the constraint on $\hat \beta$ can be written in the form
\be \label{eq:Gconstraint}
\xi = (\mathscr{G}  + \mathscr{R}_h ) \hat{\beta} .
\ee
The operator $\mathscr{G}$ can be shown to be right-invertible using \cite[Lemma 2.1]{Seidman-Yong}\footnote{In the notation of \cite{Seidman-Yong}, $\mathscr{G} = \left ( \sum_{i=0}^\kappa \frac{1}{i!} P_i \right ) \mb{M}_\kappa$.}. Thus there exists a bounded operator $\mathscr{H} : \R^N \to L^{q'}(0,1)$ such that
$\mathscr{G} \mathscr{H} \xi = \xi$ for all $\xi \in \R^N$. Hence \eqref{eq:Gconstraint} is equivalent to
\be \label{eq:Gconstraint_factored}
\xi = \mathscr{G}  ( I +\mathscr{H}  \mathscr{R}_h ) \hat{\beta}.
\ee

For all sufficiently small $h$, $ I + \mathscr{H}  \mathscr{R}_h $ defines an invertible operator $L^{q'}(0,1) \to L^{q'}(0,1)$: to see this, it suffices to show that $\| \mathscr{H}  \mathscr{R}_h \|_{\mc{B}(L^{q'}(0,1))} < 1$, or indeed, since $\mathscr{H}$ is a bounded operator $\R^N \to L^{q'}(0,1)$, that $\lim_{h \to 0} \| \mathscr{R}_h \|_{\mc{B}(L^{q'}(0,1) ; \R^N)} = 0$.
We have
\be
\| \mathscr{R}_h \|_{\mc{B}(L^{q'}(0,1) ; \R^N)} \leq \| R_A(\, \cdot \: ; h) P_0 \|_{L^{q}(0,1)} : = \left ( \int_0^1 \| R_A(\tau, h) P_0 \|_{M_{N \times N}}^q \dd \tau \right )^{1/q} .
\ee
Using the series expansion \eqref{def:RA}, we deduce that
\begin{align}
\| \mathscr{R}_h \|_{\mc{B}(L^{q'}(0,1) ; \R^N)} & \leq \sum_{l=1}^\infty  \sum_{j = 0}^\kappa \frac{h^l \| A \|^{l + j}}{(l + j)!} \left ( \int_0^1 \tau^{q(l + j)} \dd \tau \right )^{1/q} \\
& \leq \sum_{l=1}^\infty  \sum_{j = 0}^\kappa \frac{h^l \| A \|^{l + j}}{(l + j)! (1 + q(l + j) )^{1/q}} .
\end{align}
Since $(l + j)! \geq l! \, j!$, we may estimate this by
\begin{align}
\| \mathscr{R}_h \|_{\mc{B}(L^{q'}(0,1) ; \R^N)} & \leq \sum_{l=1}^\infty  \frac{ (h \| A \|)^{l}}{l !} \sum_{j = 0}^\kappa \frac{ \| A \|^{j}}{ j ! } \\
& \leq \left ( e^{h \| A \|} - 1 \right ) e^{\| A \|}.
\end{align}
Hence, defining $h_\ast$ by 
\be
h_\ast : = \frac{1}{\| A \|} \log \left ( 1 + \frac{1}{2 \| \mathscr{H} \| e^{\| A \|}} \right ),
\ee
for all $h \leq h_\ast$ we have $\| \mathscr{H}  \mathscr{R}_h \| \leq \frac{1}{2}$. Hence $ I + \mathscr{H}  \mathscr{R}_h $ is invertible with norm bounds
\be \label{est:inv_normbounds}
\| (  I + \mathscr{H}  \mathscr{R}_h )^{-1} \| \leq 2, \qquad  \| I + \mathscr{H}  \mathscr{R}_h \| \leq \frac{3}{2} .
\ee

For $h \leq h_\ast$,
we use a change of variable $\tilde \beta =  ( I + \mathscr{H} \mathscr{R}_h ) \hat \beta$ in the definition of $\hat J_{h}(\xi, h)$ to write 
\be
\hat J_{h}(\xi, h) : = \inf \left \{ \frac{1}{q'} \int_0^1 | ( I +\mathscr{H}  \mathscr{R}_h )^{-1} \tilde \beta_\tau |^{q '} \dd \tau \; {\Big \vert} \; \xi =\mathscr{G} \tilde \beta \right \} .
\ee
Note that, for all $\tilde \beta \in L^{q'}(0,1)$,
\be
\|  I +\mathscr{H}  \mathscr{R}_h  \|^{-q'} \int_0^1 |  \tilde \beta_\tau |^{q '} \dd \tau \leq \int_0^1 | ( I + \mathscr{H} \mathscr{R}_h )^{-1} \tilde \beta_\tau |^{q '} \dd \tau \leq \| ( I + \mathscr{H}  \mathscr{R}_h )^{-1} \|^{q'} \int_0^1 |  \tilde \beta_\tau |^{q '} \dd \tau ,
\ee
and hence by \eqref{est:inv_normbounds},
\be \label{est:Jtransform_equiv}
\left ( \frac{2}{3} \right )^{q'} \int_0^1 |  \tilde \beta_\tau |^{q '} \dd \tau \leq \int_0^1 | ( I + \mathscr{H}  \mathscr{R}_h )^{-1} \tilde \beta_\tau |^{q '} \dd \tau \leq 2^{q'} \int_0^1 |  \tilde \beta_\tau |^{q '} \dd \tau .
\ee

We complete the proof as in \cite{Seidman-Yong}. We first introduce a new norm on $\R^N$, induced by the operator $\mathscr{G}$. 
The quotient space $L^{q'}([0,1])/ \ker{\mathscr{G}}$ is a Banach space, for which $\mathscr{G}$ defines an isomorphism with $\R^N$.
Then the quotient norm
\be
\| \eta \|_{\mathscr{G}} : = \inf \{ \| \zeta\|_{L^{q'}} : \zeta \in L^{q'}([0,1]), \eta = \mathscr{G} \zeta \} 
\ee
defines a new norm on $\R^N$. 

Moreover, by \eqref{est:Jtransform_equiv}, for all $\tilde \beta \in L^{q'}(0,1)$ we have 
\be 
\frac{1}{C_q} \| \tilde \beta \|_{L^{q'}}^{q'} \leq \frac{1}{q'} \int_0^1 | ( I + \mathscr{H}  \mathscr{R}_h )^{-1} \tilde \beta_\tau |^{q '} \dd \tau \leq C_q \| \tilde \beta \|_{L^{q'}}^{q'} ,
\ee
and hence by taking infimum over $\tilde \beta$ such that $\xi =\mathscr{G} \tilde \beta$ we obtain
\be 
\frac{1}{C_q} \| \xi \|_{\mathscr{G}}^{q'} \leq \hat J_{h} (\xi, h) \leq C_q \| \xi \|_{\mathscr{G}}^{q'} .
\ee

Since the dimension of $\R^N$ is finite, $\| \cdot \|_{\mathscr{G}}$ is equivalent to the standard Euclidean norm. 
Hence, by modifying $C_q$, we obtain
\be 
\frac{1}{C_q} | \xi |^{q'} \leq \hat J_{h} (\xi, h) \leq C_q | \xi |^{q'} 
\ee
which completes the proof.

\end{proof}

We observe now that the proof of Proposition \ref{prop:tstar} follows immediately from Lemma \ref{lem:Jtransform} and Lemma \ref{lem:norms}.

\subsubsection{Curved Trajectories}

For equations with unbounded $f \in L^p$, our method is inspired by the approaches used in \cite{CardaliaguetGraber, Cardaliaguet-Silvestre}. In these works a `cone' of controlled trajectories is built by perturbed around optimal trajectories, created a set of non-zero measure in $\R \times \R^N$ on which $L^p$ estimates of $f$ can be used. An additional difficulty in our present setting is that, since the trajectories must follow the flow \eqref{eq:Properties_ODE}, they cannot be chosen as freely or explicitly as in these prior works. In particular, to ensure sufficient integrability of the Jacobian matrix, the trajectories need to be suitably curved near the vertex of the cone. A similar issue was considered in \cite{ADGLMR} in the setting of ultraparabolic equations. Here we build upon their construction, using perturbative arguments based on \cite{Seidman, Seidman-Yong} to extend to drift matrices $A$ that are not in principal part form (recall Definition~\ref{def:PrincipalPart}).

The following result is a consequence of those in \cite[Section 2]{ADGLMR}, transliterated into our notation.

\begin{lemma} \label{lem:ADGLMR}
Let $t > 0$ and let $(\alpha_i)_{i=0}^\kappa$ be pairwise distinct (i.e. $\alpha_i \neq \alpha_j$ for $i \neq j$).
There exist linear maps $(H_i : \R^N \to E_0)_{i=0}^\kappa$ (depending on $(\alpha_i)_{i=0}^\kappa$ and $t$) such that the following holds. 
For all $w \in \R^N$, let $\Phi( \cdot , w) : [0, t] \to \R^N$ denote the solution of the controlled ODE
\be
\partial_\tau \Phi(\tau, w) = A_0 \Phi(\tau, w) + \sum_{i=0}^\kappa \tau^{\alpha_i - 1} H_i w, \qquad \Phi(0, w) = 0, 
\ee
where we recall that $A_0$ is the principal part of $A$, defined in Definition \ref{def:PrincipalPart}.
Then 
\begin{enumerate}[(i)]
\item $\Phi(t,w) = w$ for all $w \in \R^N$.
\item For each $s \in (0, t]$, $\Phi(s, \cdot) : \R^N \to \R^N$ is an invertible linear transformation.
\item $P_0 \nabla \Phi^{-1}(s) = \sum_{i=0}^\kappa O ( s^{\alpha_i - 1})$ .
\end{enumerate}
\end{lemma}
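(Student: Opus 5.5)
The plan is to write $\Phi(s,w)$ explicitly by Duhamel's formula and reduce everything to invertibility of a finite Gram-type matrix. Since $A_0$ is nilpotent and $\Phi(0,w)=0$,
\be
\Phi(s,w) = \sum_{i=0}^\kappa \int_0^s e^{(s-\tau)A_0}\tau^{\alpha_i-1}\dd\tau \, H_i w = \sum_{i=0}^\kappa M_i(s) H_i w ,
\ee
where $M_i(s) := \int_0^s e^{(s-\tau)A_0}\tau^{\alpha_i-1}\dd\tau$ (we need $\alpha_i>0$ for integrability). Expanding $e^{(s-\tau)A_0}=\sum_{l=0}^\kappa \frac{(s-\tau)^l}{l!}A_0^l$ and using Beta integrals gives
\be
M_i(s)P_0 = \sum_{l=0}^\kappa c_{i,l}\, s^{\alpha_i+l}A_0^lP_0 , \qquad c_{i,l} = \frac{\Gamma(\alpha_i)}{\Gamma(\alpha_i+l+1)} .
\ee
By the block structure \eqref{A0_BlockForm}, $A_0^l P_0$ maps $E_0$ into $E_l$. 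Moreover, $P_lA^lP_0 = A^{(l,l-1)}\cdots A^{(1,0)}$ is a product of full-row-rank blocks, so it is surjective onto $E_l$. Choose right inverses $B_l : E_l\to E_0$ with $A_0^lP_0B_l = \mathrm{id}_{E_l}$.

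The ansatz is $H_i := \sum_{l=0}^\kappa \mu_{i,l}\, t^{-\alpha_i-l} B_l P_l$ with scalar coefficients $\mu_{i,l}$ to be determined. Then
\be
P_m\Phi(s,w) = \sum_{l=0}^\kappa \Big( \sum_{i} c_{i,m}\mu_{i,l}\,(s/t)^{\alpha_i}\Big) s^m t^{-l}\, A_0^mP_0B_lP_lw .
\ee
To get $\Phi(t,w)=w$, it suffices to have $\sum_i c_{i,m}\mu_{i,l}=\delta_{ml}$ for all $m,l$. Then the cross terms $m\neq l$ vanish at $s=t$, and the diagonal terms equal $P_lw$. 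This requires the $(\kappa+1)\times(\kappa+1)$ matrix $C=(c_{i,m})$ to be invertible, which is the step I expect to need real care. The entry $c_{i,m}$ equals $1/[\alpha_i(\alpha_i+1)\cdots(\alpha_i+m)]$. Column operations (partial fractions in $m$) reduce $C$ to a matrix of the form $\big(p_m(\alpha_i)/\prod_{j\le \kappa}(\alpha_i+j)\big)$, where $p_m$ is a polynomial of degree exactly $\kappa-m$. Such a matrix is Vandermonde-like, with nonzero determinant because the $\alpha_i$ are pairwise distinct. So $\mu=C^{-1}$ exists, and (i) follows.

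For (ii) and (iii), I would compute in the same way for general $s$. One subtlety is that $A_0^mP_0B_l$ with $m\neq l$ is not an identity on the blocks, but it maps $E_l$ into $E_m$, so the structure stays triangular in the right sense. Writing $\sigma=s/t$ and using the scaling invariance \eqref{eq:ScalingPreservesA0}, I expect $\Phi(s,\cdot) = S(\sigma)\,\Psi(\sigma)\,\sigma^{(\cdot)}$-type factorisations; more concretely, $\Phi(s,w)=S(s)\,G(\sigma)\,S(t)^{-1}w$ with $G(\sigma)$ an $(\alpha_i)$-power combination. The cleaner route is to choose the $B_l$ canonically, $B_l=(A_0^lP_0)^{+}$, and work directly in the block basis. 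Then $\det\Phi(s,\cdot)$ reduces to a determinant of the form $\det(c_{i,m}\sigma^{\alpha_i})\det(C^{-1})$ times powers of $\sigma$. This is a generalized Vandermonde (Müntz-type) determinant, nonvanishing for distinct real exponents, which gives (ii).

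Finally, $\nabla\Phi^{-1}(s)=\Phi(s,\cdot)^{-1}$, and by Cramer's rule its entries are combinations of the powers $s^{\alpha_i-\alpha_j}$ times $s^{-m}$-scalings. Applying $P_0$ selects the $E_0$ component. Tracking exponents should give the bound $O(s^{\alpha_i-1})$ claimed in (iii), in the form the paper uses, namely as $s\to0$ with $t$ fixed. The main obstacle is precisely this exponent bookkeeping for (iii), together with the nondegeneracy of the generalized Vandermonde determinant for non-integer exponents. For the latter I would invoke the classical Pólya (Descartes rule of signs) result that $\sum_i a_i\sigma^{\alpha_i}$ has at most $\kappa$ positive zeros.
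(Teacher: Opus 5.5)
\textbf{Verdict: there is a genuine gap.} Your proof of (i) works, but for your ansatz item (ii) is false as soon as $n_1<n_0$. Your Duhamel computation and (i) are fine. The condition you need is $C^\top\mu=I$. $C$ is invertible because multiplying row $i$ by $\alpha_i(\alpha_i+1)\cdots(\alpha_i+\kappa)$ turns it into $(p_m(\alpha_i))_{i,m}$ with $p_m(x)=\prod_{j=m+1}^{\kappa}(x+j)$, which is a Vandermonde matrix times a triangular change of basis.

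\textbf{Why (ii) breaks.} The $(m,l)$ block of $\Phi(s,\cdot)$ is $g_{ml}(\sigma)\,s^mt^{-l}A_0^mP_0B_l$, where $g(\sigma)=C^\top\mathrm{diag}(\sigma^{\alpha_i})(C^\top)^{-1}$. The operator $A_0^mP_0B_l$ is nonzero for every $m\neq l$: it is injective for $m<l$ and equals $A_0^{m-l}|_{E_l}$ for $m>l$. So there is no triangular structure. The reduction to $(\det g(\sigma))^{n_0}$ is valid only when every $A_0^lP_0|_{E_0}$ is a bijection, i.e.\ $n_0=n_1=\cdots=n_\kappa$; in that case $\det g(\sigma)=\sigma^{\sum_i\alpha_i}$ trivially, with no M\"untz/P\'olya argument needed.

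\textbf{Explicit counterexample.} If $n_1<n_0$, there is $0\neq w\in E_0$ with $A_0w=0$. For instance take $N=3$, $E_0=\mathrm{span}\{e_1,e_2\}$, $Ae_1=e_3$, $Ae_2=Ae_3=0$, $w=e_2$. For such $w$ you get $H_iw=\mu_{i,0}t^{-\alpha_i}w$, hence $\Phi(s,w)=\phi(s/t)\,w$ with $\phi(\sigma)=\sum_i\alpha_i^{-1}\mu_{i,0}\sigma^{\alpha_i}$. Your constraints for $l=0$ and $m=0,1$ say exactly $\phi(1)=1$ and $\int_0^1\phi(\sigma)\dd\sigma=\sum_ic_{i,1}\mu_{i,0}=0$. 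So $\phi$ vanishes at some $\sigma_*\in(0,1)$, and $\Phi(\sigma_*t,\cdot)$ has $w$ in its kernel. Descartes' rule only bounds how many such zeros occur; it cannot exclude one.

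\textbf{The missing idea.} You must respect the chain structure of $A_0$ on $E_0$: directions of $E_0$ that are annihilated early cannot be forced to satisfy constraints coming from higher levels.
\begin{itemize}
\item Write $K_j:=A_0^jP_0|_{E_0}$. For $k=0,\dots,\kappa$ choose $F_k$ a complement of $\ker K_k$ in $\ker K_{k+1}$, where $\ker K_{\kappa+1}=E_0$.
\item Then $A_0^j\colon F_k\to E_j$ is injective for $j\le k$.
\item Set $W_k:=\bigoplus_{j\le k}A_0^jF_k$. Then $\R^N=\bigoplus_kW_k$, and each $W_k$ is invariant under $A_0$ and under $S(\cdot)$.
\item On $W_k$ use only $\alpha_0,\dots,\alpha_k$, and solve with $C_k:=(c_{i,m})_{i,m\le k}$.
\end{itemize}
Then $\Phi(s,\cdot)|_{W_k}$ is represented by $\mathrm{diag}(s^m)\,(g_k(\sigma)\otimes I_{F_k})\,\mathrm{diag}(t^{-l})$, and $\det g_k(\sigma)=\sigma^{\alpha_0+\cdots+\alpha_k}\neq0$. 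This gives (ii).

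Item (iii) still has to be read off from the resulting explicit inverse; in your proposal it is only asserted. For comparison, the paper gives no proof of this lemma and simply cites \cite{ADGLMR}.
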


We will extend this result to general $A$, identifying asymptotics for the associated control costs and the Jacobian of the resulting transformation.

\begin{prop} \label{prop:flows}
Let $(\alpha_i)_{i=0}^\kappa$ be pairwise distinct with $\alpha_i \in \left(\frac{1}{q}, 1\right)$ for all $i = 0, \ldots, \kappa$.
There exist positive constants $h_\ast, C>0$ such that the following holds.

Let $t > 0$ and $h \geq 0$ satisfy $h t \leq h_\ast$. Then there exists a linear map $B(t ; h) : \R^N \to E_0^{\otimes (\kappa + 1)}$ such that, letting $\beta : (0,t) \to E_0$ be defined for all $w \in \R^N$ by
\be
\beta_\tau(w) = \sum_{i=0}^\kappa \tau^{\alpha_i - 1} B_i(t ; h) w ,
\ee
and letting $\Phi(s,w)$ denote the solution of the ODE
\be
\partial_s \Phi(s,w) = A_h \Phi(s,w) + P_0 \beta_s(w), \qquad \Phi(0,w) = 0,
\ee
then $\Phi(t,w) = w$ for all $w \in \R^N$. Moreover 
\begin{enumerate}[(i)]
\item For all $s \in (0,t]$, $\Phi(s, \cdot ) : \R^N \to \R^N$ defines an invertible linear transformation.
\item \label{item:prop:flows-Lp} For all $w \in \R^N$,
\be
\int_0^t |\beta_\tau |^{q'} \dd \tau \leq C t^{- q' / q}  |S(t)^{-1} w|^{q'}
\ee
\item The Jacobian of the transformation $\Phi^{-1}(s, \cdot)$ satisfies
\be
| \det \nabla_w \Phi^{-1}(s) | \leq C 
\left ( \frac{t}{s}  \right )^{N \alpha^\ast + \sum_{j=1}^\kappa j n_j}  \qquad s \in (0, t],
\ee
where
\be \label{def:alpha_max}
\alpha^\ast : = \max_i \alpha_i ,
\ee
and the gradient satisfies
\be
|S(t)^{-1} \nabla_w \Phi^{-1}(s) S(s)| \leq C \left ( \frac{t}{s}  \right )^{\alpha^\ast} .
\ee
\end{enumerate}

\end{prop}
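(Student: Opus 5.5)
The plan is to reduce to the unit time interval by the rescaling of Lemma~\ref{lem:eta_rep}, solve the principal-part problem there with Lemma~\ref{lem:ADGLMR} (taking $t=1$), and treat the lower-order part of $A_{ht}$ as a small perturbation, in the same way as in the proof of Lemma~\ref{lem:norms}.

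\emph{Reduction to unit time.} Put $\hat\beta_\tau = \beta_{t(1-\tau)}$, or, more conveniently here, work with forward time $\sigma=s/t$. A change of variables as in Lemma~\ref{lem:eta_rep} gives
\[
\Phi(s,w) = S(t)\,\hat\Phi(s/t, \hat w)\, t, \qquad \hat w := t^{-1} S(t)^{-1} w ,
\]
where $\hat\Phi$ solves $\partial_\sigma \hat\Phi = A_{ht}\hat\Phi + P_0\hat\beta_\sigma$ on $[0,1]$ with $\hat\beta_\sigma = t\,\beta_{t\sigma}$. This uses $tS(t)^{-1}A_hS(t)=A_{ht}$ and $S(t)^{-1}P_0=P_0$. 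A power-type control keeps its form: $\tau^{\alpha_i-1}B_i w$ becomes $\sigma^{\alpha_i-1}\hat B_i\hat w$ after absorbing powers of $t$ into $\hat B_i$. It therefore suffices to prove the statement for $t=1$ and all $h\le h_\ast$, and then rescale.

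\emph{Unit time, perturbing from $h=0$.} Lemma~\ref{lem:ADGLMR} with $t=1$ provides maps $H_i$ such that the endpoint map $\mathscr{G}_0: (c_i)_i\mapsto \int_0^1 e^{(1-\sigma)A_0}P_0\sum_i\sigma^{\alpha_i-1}c_i\,\dd\sigma$ has a linear right inverse $w\mapsto (H_iw)_i$ on the finite-dimensional parameter space $E_0^{\kappa+1}$. For $h>0$ the endpoint map is $\mathscr{G}_h = \mathscr{G}_0 + \mathscr{R}_h$, and the bound $\|e^{\sigma A_h}-e^{\sigma A_0}\|\le C h$ follows from the series in Lemma~\ref{lem:eta_rep}(i). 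Because $\alpha_i>1/q>0$, the functions $\sigma^{\alpha_i-1}$ are integrable, so $\|\mathscr{R}_h\|\le Ch$. Setting $M:=(H_i)_i$, the operator $\mathscr{G}_hM = I + \mathscr{R}_hM$ is invertible for $h\le h_\ast$, and I take $B(1;h):=M(\mathscr{G}_hM)^{-1}$, which then has norm bounded uniformly in $h$. This yields $\Phi(1,w)=w$. For item (ii), the condition $\alpha_i>1/q$ gives $q'(\alpha_i-1)>-1$, so $\int_0^1|\beta|^{q'}\le C|w|^{q'}$. Rescaling turns this into the bound $t\cdot t^{-q'}|S(t)^{-1}w|^{q'} = t^{-q'/q}|S(t)^{-1}w|^{q'}$, as stated.

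\emph{Invertibility and Jacobian.} This is the main obstacle. The goal is to show that for $\sigma\in(0,1]$ the map $\hat\Phi(\sigma,\cdot)$ satisfies
\[
\hat\Phi(\sigma,\cdot) = \sigma\, S(\sigma)\, G(\sigma;h)\,\operatorname{diag}(\sigma^{\alpha_i-1}\text{-weighted})\, B ,
\]
with $G$ uniformly invertible. To get this, apply Lemma~\ref{lem:eta_rep}(ii) on $[0,\sigma]$. This produces $\sigma S(\sigma)\int_0^1(e^{\tau A_0}+R_A(\tau;h\sigma))P_0\sum_i(\sigma(1-\tau))^{\alpha_i-1}\hat B_i\hat w\,\dd\tau$. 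The leading factor $\sum_i\sigma^{\alpha_i-1}(\cdot)$ comes from the $h=0$ matrix, which Lemma~\ref{lem:ADGLMR}(ii)--(iii) controls. The $R_A$ correction is $O(h\sigma)$ and is absorbed by the same Neumann-series argument, uniformly in $\sigma$. Inverting gives $|S(1)^{-1}\nabla\hat\Phi^{-1}(\sigma)S(\sigma)|\lesssim \sigma^{-1}\max_i\sigma^{1-\alpha_i}=\sigma^{-\alpha^\ast}$, using $\sigma\le 1$. Rescaling back to time $t$ then gives the gradient bound $(t/s)^{\alpha^\ast}$. The determinant bound follows by taking determinants in $S(t)^{-1}\nabla\Phi^{-1}S(s)$: we have $\det S(s)/\det S(t)=(s/t)^{\sum jn_j}$, and the determinant of the middle factor is at most $C(t/s)^{N\alpha^\ast}$ by the operator bound. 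The delicate step is that the lower bound on $G(\sigma;h)$ must be uniform as $\sigma\to0$. I would handle it by noting that after the $\sigma$-rescaling the perturbation parameter is $h\sigma\le h$, so the same threshold $h_\ast$ works for every $\sigma$.
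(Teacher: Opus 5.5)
Your reduction to unit time, the choice $B=\mathcal{H}(I+\mathcal{R}_h\mathcal{H})^{-1}$ (equal to the paper's $(I+\mathcal{H}\mathcal{R}_h)^{-1}\mathcal{H}$ by the push-through identity), the endpoint identity $\Phi(t,w)=w$ and item (ii) all follow the paper and are fine. One small slip: with your normalisation $\hat w=t^{-1}S(t)^{-1}w$ the rescaled control is $\hat\beta_\sigma=\beta_{t\sigma}$, not $t\beta_{t\sigma}$, though your final exponent is correct.

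\textbf{The gap is in (i) and (iii).} Your $G(\sigma;h)$ maps $E_0^{\kappa+1}$, which has dimension $n_0(\kappa+1)\ge N$, into $\R^N$. So ``$G$ uniformly invertible'' has no meaning in general. The bound $\sigma^{-1}\max_i\sigma^{1-\alpha_i}$ that you get by inverting only the diagonal factor is therefore unjustified. What must be inverted is the $N\times N$ matrix $X_h(\sigma):=(\mathcal{G}+\mathcal{R}_{h\sigma})\mathcal{D}_{\underline{\alpha}}(\sigma)\mathcal{H}$.

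Suppose you run your Neumann series around $h=0$ at scale $\sigma$, with $\alpha_\ast:=\min_i\alpha_i$. The perturbation $\mathcal{R}_{h\sigma}\mathcal{D}_{\underline{\alpha}}(\sigma)\mathcal{H}$ has norm $\lesssim h\sigma^{1+\alpha_\ast}$. The argument therefore closes once you know $\|X_0(\sigma)^{-1}\|\le C\sigma^{-\alpha^\ast}$ uniformly in $\sigma\in(0,1]$. But that is exactly the $h=0$ case of the gradient bound you are trying to prove.

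Lemma~\ref{lem:ADGLMR} does not supply this bound:
\begin{itemize}
\item Part (ii) is qualitative, for each fixed $s$.
\item Part (iii) only controls $P_0\nabla\Phi^{-1}$.
\item Compactness fails near $\sigma=0$: since the $\alpha_i$ are distinct, $\sigma^{-\alpha_\ast}X_0(\sigma)$ converges to a matrix of rank at most $n_0<N$. So $X_0(\sigma)$ genuinely degenerates, and the rate $\sigma^{-\alpha^\ast}$ has to come from the fine structure of the $H_i$.
\end{itemize}
Your remark that the perturbation parameter is $h\sigma\le h$ controls the size of the perturbation, not this degenerating lower bound.

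The paper does not perturb scale by scale. It writes
\[
\Phi(s,\cdot)=S(s)\mathcal{G}(I+\mathcal{H}\mathcal{R}_{hs})\mathcal{D}_{\underline{\alpha}}(s/t)(I+\mathcal{H}\mathcal{R}_{ht})^{-1}\mathcal{H}S(t)^{-1},
\]
inverts factor by factor, and reads off $(t/s)^{\alpha^\ast}$ from $\mathcal{D}_{\underline{\alpha}}(t/s)=(t/s)^{\alpha^\ast}\widehat{\mathcal{D}}_{\underline{\alpha}}(s/t)$, where $\widehat{\mathcal{D}}_{\underline{\alpha}}$ is bounded.

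That explicit inverse uses $\mathcal{H}\mathcal{G}=I$. This holds when all $n_k$ coincide, since then $\mathcal{G}$ is a bijection and $\mathcal{H}=\mathcal{G}^{-1}$. In that case it also gives you the missing estimate, because $X_0(\sigma)^{-1}=\mathcal{G}\mathcal{D}_{\underline{\alpha}}(\sigma)^{-1}\mathcal{H}$.

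In general you need a quantitative form of Lemma~\ref{lem:ADGLMR}(ii), namely $|\nabla\Phi_0^{-1}(\sigma)S(\sigma)|\le C\sigma^{-\alpha^\ast}$ for the principal-part flow, extracted from the explicit construction in \cite{ADGLMR}. Granted that, your per-scale Neumann argument does close. It also has the advantage of not requiring $\mathcal{G}$ to be invertible.
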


\begin{proof}
By Lemma~\ref{lem:eta_rep}, for every $w \in \R^N$ we are looking for $b = (b_i)_{i=0}^\kappa = \left ( b(w) \right )_{i=0}^\kappa \in E_0^{\otimes (\kappa + 1)}$ such that
\be
w = t S(t) \int_0^{1} \left ( e^{\tau A_0}  + R_A(\tau ; h t)  \right ) P_0 \hat{\beta}_{\tau} \dd \tau ,
\ee
where $\hat \beta_\tau  : = \beta_{t (1 - \tau )}$ and $\beta_\tau = \sum_{i=0}^\kappa \tau^{\alpha_i - 1} b_i $. Hence
\be
\hat \beta_\tau = \sum_{i=0}^\kappa (1 - \tau)^{\alpha_i - 1} t^{\alpha_i - 1} b_i ,
\ee
so that our constraint on $b$ may be written in the form
\be \label{eq:w_constraint}
w =  t S(t) \int_0^{1} \left ( e^{\tau A_0}  + R_A(\tau ; h t)  \right )  \sum_{i=0}^\kappa (1 - \tau)^{\alpha_i - 1} t^{\alpha_i - 1} P_0 b_i \dd \tau .
\ee

Let $\mc{D}_{\underline \alpha}(h)$ denote the family of anisotropic dilation operators on $E_0^{\otimes (\kappa + 1)}$ such that
\be
\left ( \mc{D}_{\underline \alpha}(h) b \right )_i = h^{\alpha_i} b_i \qquad \text{for } i=0, \ldots \kappa, \; \;  \text{for all } b \in E_0^{\otimes (\kappa + 1)} .
\ee
Then \eqref{eq:w_constraint} can be rewritten as
\be \label{eq:w_constraint_2}
w = S(t) \sum_{i=0}^\kappa \left ( \int_0^{1}  (1 - \tau)^{\alpha_i - 1} \left ( e^{\tau A_0}  + R_A(\tau ; h t)  \right )  \dd \tau \right ) \left ( \mc{D}_{\underline \alpha}(t) b \right )_i  
\ee

Now define operators
$\mc{G}, \mc{R}_h : (E_0)^{\otimes ( \kappa + 1)} \to \R^N$ by
\be\label{def:G_R}
\mc{G} b  := \sum_{i=0}^\kappa \int_0^1 (1-\tau)^{\alpha_i - 1} e^{\tau A_0} b_i \dd \tau , \quad \mc{R}_h b  := \sum_{i=0}^\kappa \int_0^1 (1-\tau)^{\alpha_i - 1} R_A(\tau ; h)  b_i \dd \tau \quad \text{for all } b \in (E_0)^{\otimes ( \kappa + 1)}. 
\ee
Then \eqref{eq:w_constraint_2} is the statement that
\be \label{eq:w_constraint_ops}
w = S(t) (\mc{G} + \mc{R}_{h t}) \mc{D}_{\underline \alpha}(t) b .
\ee

In \cite{ADGLMR}, it is shown that $\mc{G}$ has a right inverse $\mc{H} : \R^N \to (E_0)^{\otimes ( \kappa + 1)}$ such that $\xi = \mc{G} \mc{H} \xi$ for all $\xi \in \R^N$. We rewrite \eqref{eq:w_constraint_ops} as
\be \label{eq:w_constraint_factored}
w = S(t) \mc{G} ( I + \mc{H} \mc{R}_{h t}) \mc{D}_{\underline \alpha}(t) b .
\ee
It follows that, if the operator $( I + \mc{H} \mc{R}_{h t}) : (E_0)^{\otimes ( \kappa + 1)} \to (E_0)^{\otimes ( \kappa + 1)}$ is invertible, then
\be
b = \mc{D}_{\underline \alpha}(t)^{-1}  ( I + \mc{H} \mc{R}_{h t})^{-1} \mc{H} S(t)^{-1} w
\ee
gives a solution to the problem.

We show below in Lemma~\ref{lem:R_curved} that there exists $h_\ast$ such that, for all $h \leq h_\ast$, $I + \mc{H} \mc{R}_{h}$ is invertible with norm bounds
\be \label{est:normbounds_curved}
\| ( I + \mc{H} \mc{R}_{h} )^{-1} \| \leq 2 , \qquad \| I + \mc{H} \mc{R}_{h} \| \leq \frac{3}{2} .
\ee
Hence, for all $h t \leq h_\ast$ we may choose
\be
B(t) : = \mc{D}_{\underline \alpha}(t)^{-1}  ( I + \mc{H} \mc{R}_{h t})^{-1} \mc{H} S(t)^{-1}  .
\ee

By Lemma~\ref{lem:eta_rep}, 
\be
\int_0^t |\beta_\tau |^{q'}  \dd \tau = t \int_0^1 |\hat \beta_\tau |^{q'} \dd \tau .
\ee
Since
\begin{align}
\hat \beta_\tau &= \sum_{i=0}^\kappa (1 - \tau)^{\alpha_i - 1} t^{\alpha_i - 1} B_i(t) w \\
& = t^{ - 1} \sum_{i=0}^\kappa (1 - \tau)^{\alpha_i - 1} ( I + \mc{H} \mc{R}_{h t})^{-1} \mc{H} S(t)^{-1} w ,
\end{align}
the norm bounds \eqref{est:normbounds_curved} imply that
\be
\| \hat \beta \|_{L^{q'}} \leq t^{-1}  C |S(t)^{-1} w| \sum_{i=0}^\kappa \|  (1 - \tau)^{\alpha_i - 1} \|_{L^{q'}(0,1)} .
\ee
Since $\alpha_i > 1/q$, $(1 - \tau)^{\alpha_i - 1} \in L^{q'}(0,1)$ and we have $\| \hat \beta \|_{L^{q'}(0,1)} \leq C  t^{-1}  |S(t)^{-1} w|$. Hence
\be
\int_0^t |\beta_\tau |^{q'}  \dd \tau = t \| \hat \beta \|_{L^{q'}(0,1)}^{q'} \leq C t^{1-q'} |S(t)^{-1} w|^{q'} .
\ee
Since $q$ and $q'$ are H\"older conjugate exponents, $1 - q' = q'(\frac{1}{q'} - 1) = - \frac{q'}{q}$. We conclude that
\be
\int_0^t |\beta_\tau |^{q'}  \dd \tau  \leq C t^{- q'/ q}  |S(t)^{-1} w|^{q'} .
\ee

Then $\Phi(s,w)$ satisfies the ODE
\be
\partial_s \Phi(s,w) = A_h \Phi(s,w) + P_0 \sum_{i=0}^\kappa s^{\alpha_i - 1} B_i(t) w, \qquad \Phi(0,w) = 0 .
\ee
By Lemma~\ref{lem:eta_rep}, for any $s \in (0,t]$ we have
\begin{align}
\Phi(s,w) & = s S(s) \int_0^{1} \left ( e^{\tau A_0}  + R_A(\tau ; h s)  \right )   \sum_{i=0}^\kappa (1 - \tau)^{\alpha_i - 1} s^{\alpha_i - 1} P_0 B_i(t) w\dd \tau \\
& = S(s) \mc{G} ( I + \mc{H} \mc{R}_{h s}) \mc{D}_{\underline \alpha}(s) B(t) w \\
& = S(s) \mc{G} ( I + \mc{H} \mc{R}_{h s}) \mc{D}_{\underline \alpha} \left ( \frac{s}{t} \right) ( I + \mc{H} \mc{R}_{h t})^{-1} \mc{H} S(t)^{-1}  w .
\end{align}
Thus $\Phi(s, \cdot)$ indeed defines a linear map from $\R^N \to \R^N$ for $s \in (0,t]$. Since $s \leq t$, $h s \leq h t \leq h_\ast$, and hence $\Phi(s, \cdot)$ is invertible with inverse
\be
\Phi^{-1}(s,w) = S(t) \mc{G} ( I + \mc{H} \mc{R}_{h t}) \mc{D}_{\underline \alpha} \left ( \frac{t}{s} \right) ( I + \mc{H} \mc{R}_{h s})^{-1} \mc{H} S(s)^{-1}  w .
\ee
Therefore {the Jacobian matrix $\left ( \nabla_w \Phi^{-1} \right )_{ij} =  \nabla_{w_j} \Phi^{-1}_i \in \R^{N \times N}$} is
\be
\nabla_w \Phi^{-1}(s,w) = \left [ S(t) \mc{G} ( I + \mc{H} \mc{R}_{h t}) \mc{D}_{\underline \alpha} \left ( \frac{t}{s} \right) ( I + \mc{H} \mc{R}_{h s})^{-1} \mc{H} S(s)^{-1} \right ]^\top ;
\ee
in particular $\nabla_w \Phi^{-1}(s,w) = \nabla_w \Phi^{-1}(s)$ does not depend on $w$. 

We now define another dilation operator $\widehat{\mc{D}}_{\underline{\alpha}}$ on $E_0^{\otimes (\kappa + 1)}$ such that
\be
(\widehat{\mc{D}}_{\underline{\alpha}}(\tau) b)_i = \tau^{\alpha^\ast - \alpha_i} b_i \qquad \forall b \in E_0^{\otimes (\kappa + 1)}
\ee
(recall that $\alpha^\ast$ is defined by \eqref{def:alpha_max}).
Then $\mc{D}_{\underline{\alpha}}(\tau^{-1} ) = \tau^{- \alpha^\ast} \widehat{\mc{D}}_{\underline{\alpha}}(\tau)$ for all $\tau > 0$.
Since $\alpha^\ast \geq \alpha_i$ for all $i = 0, \ldots, \kappa$, the coefficients of $\widehat{\mc{D}}_{\underline{\alpha}}(\tau)$ are uniformly bounded for all $\tau \leq 1$.

We may therefore write 
\be
\nabla_w \Phi^{-1}(s) = \left ( \frac{t}{s}  \right )^{ \alpha^\ast} S(t) \mc{G} ( I + \mc{H} \mc{R}_{h t}) \widehat{\mc{D}}_{\underline{\alpha}} \left ( \frac{s}{t} \right) ( I + \mc{H} \mc{R}_{h s})^{-1} \mc{H} S(s)^{-1} .
\ee
Hence
\be
|\det \nabla_w \Phi^{-1}(s) | = \left ( \frac{t}{s}  \right )^{N \alpha^\ast} |\det S(t) | \left| \det S(s)^{-1}\right| \left | \det \mc{G} ( I + \mc{H} \mc{R}_{h t}) \widehat{\mc{D}}_{\underline{\alpha}} \left ( \frac{s}{t} \right) ( I + \mc{H} \mc{R}_{h s})^{-1} \mc{H} \right | .
\ee
Since $s \leq t$ and $h s \leq h t \leq h_\ast$, by \eqref{est:normbounds_curved} the matrices $\mc{G} ( I + \mc{H} \mc{R}_{h t}) \widehat{\mc{D}}_{\underline{\alpha}} \left ( \frac{s}{t} \right) ( I + \mc{H} \mc{R}_{h s})^{-1} \mc{H}$ form a bounded family. Hence there exists a constant $C>0$ such that
\be
\left | \det \mc{G} ( I + \mc{H} \mc{R}_{h t}) \widehat{\mc{D}}_{\underline{\alpha}} \left ( \frac{s}{t} \right) ( I + \mc{H} \mc{R}_{h s})^{-1} \mc{H} \right | \leq C \qquad \forall 0 < s \leq t, \; h t \leq h_\ast .
\ee
We conclude the proof by noting that 
\be
\det S(t) = t^{\sum_{j=1}^\kappa j n_j} .
\ee

\end{proof}

\begin{lemma} \label{lem:R_curved}
Let $\mc{H}$ and $\mc{R}_h$ be the operators defined in the proof of Proposition \ref{prop:flows}.
There exists $h_\ast >0$ such that, for all $h \leq h_\ast$, $I + \mc{H} \mc{R}_h$ is invertible with norm bounds
\be
\| I + \mc{H} \mc{R}_h \| \leq \frac{3}{2}, \qquad \| ( I + \mc{H} \mc{R}_h )^{-1} \| \leq 2 .
\ee

\end{lemma}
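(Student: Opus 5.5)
The plan is to mirror the argument used for Lemma~\ref{lem:norms}: a Neumann series in which $\mc{H}$ is fixed and $\mc{R}_h$ tends to zero. Here $(E_0)^{\otimes(\kappa+1)}$ is finite dimensional, equipped with any fixed norm. $\mc{H}:\R^N \to (E_0)^{\otimes(\kappa+1)}$ is a fixed bounded linear map independent of $h$; it is provided by \cite{ADGLMR} and depends only on $A_0$, $P_0$ and $(\alpha_i)$. It therefore suffices to show
\be
\lim_{h\to 0}\|\mc{R}_h\|_{\mc{B}((E_0)^{\otimes(\kappa+1)};\R^N)} = 0 ,
\ee
with an explicit rate. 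Once we have $\|\mc{H}\mc{R}_h\|\le \tfrac12$, the Neumann series gives invertibility with $\|(I+\mc{H}\mc{R}_h)^{-1}\|\le 2$, and the triangle inequality gives $\|I+\mc{H}\mc{R}_h\|\le \tfrac32$.

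To bound $\mc{R}_h$, take $b\in(E_0)^{\otimes(\kappa+1)}$ and estimate
\be
|\mc{R}_h b| \le \sum_{i=0}^\kappa |b_i| \int_0^1 (1-\tau)^{\alpha_i-1}\, \|R_A(\tau;h)\| \dd\tau .
\ee
From the series \eqref{def:RA}, using $\tau\le 1$ and $(l+j)!\ge l!\,j!$ exactly as in the proof of Lemma~\ref{lem:norms}, we get the bound uniformly in $\tau\in[0,1]$:
\be
\sup_{\tau\in[0,1]}\|R_A(\tau;h)\| \le \left(e^{h\|A\|}-1\right)e^{\|A\|}
\qquad\text{for } 0\le h .
\ee
Since $\alpha_i>\tfrac1q>0$, each weight satisfies $\int_0^1(1-\tau)^{\alpha_i-1}\dd\tau = 1/\alpha_i<\infty$. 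Hence
\be
\|\mc{R}_h\| \le C_\alpha \left(e^{h\|A\|}-1\right)e^{\|A\|},
\qquad C_\alpha := \sum_i \alpha_i^{-1},
\ee
up to a constant coming from the choice of norm on the product space.

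We then choose
\be
h_\ast := \frac{1}{\|A\|}\log\left(1+\frac{1}{2\|\mc{H}\|C_\alpha e^{\|A\|}}\right),
\ee
so that $\|\mc{H}\mc{R}_h\|\le\tfrac12$ for all $0\le h\le h_\ast$. This concludes the argument.

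The only delicate point is the singular weight $(1-\tau)^{\alpha_i-1}$. Because we bound the kernel $R_A$ in sup norm rather than in $L^q$, only integrability of the weight is needed, and this holds since $\alpha_i>0$. A secondary check is that $R_A$ is indexed with $m\ge 1$, so every term carries at least one factor of $h$; this is what makes the bound vanish as $h\to0$. Finally, $h_\ast$ depends only on $A$, $P_0$, $q$ and $(\alpha_i)$, and this is what Proposition~\ref{prop:flows} requires.
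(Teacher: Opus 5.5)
Your proposal is correct and is essentially the paper's argument: a Neumann series with $\mc{H}$ fixed and $\|\mc{R}_h\| = O(h)$, obtained from the series for $R_A$ together with integrability of $(1-\tau)^{\alpha_i-1}$. Bounding $\tau^{m+l}\le 1$ directly to get the factor $1/\alpha_i$ is, if anything, cleaner than the paper's integration by parts, which as written drops a harmless factor $m+l$. One small correction: the computation from Lemma~\ref{lem:norms} bounds $\sup_\tau \|R_A(\tau;h)P_0\|$, not the full $\|R_A(\tau;h)\|$, by $(e^{h\|A\|}-1)e^{\|A\|}$; this is all you need, because each $b_i\in E_0$ gives $R_A(\tau;h)b_i = R_A(\tau;h)P_0 b_i$.
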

\begin{proof}
From the definition \eqref{def:G_R} of $\mc{R}_h$ and the series definition of $R_A$ \eqref{def:RA} we have, for any $b \in (E_0)^{\otimes \kappa + 1}$,
\be
\mc{R}_h b  := \sum_{i=0}^\kappa  \sum_{l = 0}^{\kappa}  \sum_{j=0}^\kappa \sum_{m=1 \vee (j-l)}^\infty   h^{m} \frac{ \tau^{m + l -j }}{(m + l - j) !} \int_0^1 (1-\tau)^{\alpha_i - 1}  P_{l} A^{m + l - j} P_j  b_i \dd \tau
\ee
Since each $b_i \in E_0$, only terms with $j=0$ are non-zero:
\be
\mc{R}_h b  :=  \sum_{m=1}^\infty   h^{m} \sum_{i=0}^\kappa  \sum_{l = 0}^{\kappa} \frac{ 1}{(m + l) !} \left ( \int_0^1 (1-\tau)^{\alpha_i - 1} \tau^{m + l } \dd \tau \right ) P_{l} A^{m + l} P_0  b_i .
\ee
Integration by parts gives
\be 
\int_0^1 (1-\tau)^{\alpha_i - 1} \tau^{m + l } \dd \tau = \alpha_i^{-1} \int_0^1 (1-\tau)^{\alpha_i} \tau^{m + l - 1} \dd \tau;
\ee
thus
\be
\mc{R}_h b =  \sum_{l=1}^\infty h^m \sum_{i=0}^\kappa \sum_{l = 0}^\kappa \frac{1}{\alpha_i (m + l)!} \left ( \int_0^1 (1-\tau)^{\alpha_i} \tau^{ m + l - 1} \dd \tau \right ) P_j A^{m + l} P_0 b_i.
\ee

The operator norm of $\mc{R}_h$ may therefore be estimated by
\be 
\| \mc{R}_h \| \leq  \sum_{m=1}^\infty h^m \sum_{i=0}^\kappa \sum_{l = 0}^\kappa \frac{1}{\alpha_i (m + l)!} \left | \int_0^1 (1-\tau)^{\alpha_i} \tau^{m + l - 1} \dd \tau \right | \| P_l A^{m + l} P_0 \| .
\ee 
Since each $\alpha_i > 0$, for all $\tau \in [0,1]$ we have $0 \leq (1-\tau)^{\alpha_i} \leq 1$. Hence
\be
 \left | \int_0^1 (1-\tau)^{\alpha_i} \tau^{m + l - 1} \dd \tau \right | \leq  \int_0^1  \tau^{m + l - 1} \dd \tau .
\ee
Then
\be
 \| \mc{R}_h \| \leq \left ( \sum_{i=0}^\kappa \frac{1}{\alpha_i} \right ) \int_0^1 \sum_{m=1}^\infty h^m  \sum_{l = 0}^\kappa \frac{1}{ (m + l)!}   \tau^{m + l - 1}  \| A  \|^{m + l} \dd \tau .
\ee
Let $j : = m-1$ to obtain 
\be
 \| \mc{R}_h \| \leq  h \| A  \| \left ( \sum_{i=0}^\kappa \frac{1}{\alpha_i} \right ) \int_0^1 \sum_{j=0}^\infty (h \tau \| A \|)^j  \sum_{l = 0}^\kappa \frac{(\tau \| A  \|)^{l}}{ (l + j + 1)!}    \dd \tau .
\ee
Then, since $(l + j + 1)! \geq j! (l + 1)!$,
\begin{align}
\| \mc{R}_h \|
& \leq h \|A \| \left ( \sum_{i=0}^\kappa \frac{1}{\alpha_i} \right ) \int_0^1 \sum_{j=0}^\infty \frac{(h \tau \| A \|)^{j}}{j!}  \sum_{l = 0}^\kappa \frac{ (\tau \|A\|^l) }{ (l + 1)!}  \dd \tau \\
& \leq h \|A \| \left ( \sum_{i=0}^\kappa \frac{1}{\alpha_i} \right ) \int_0^1 \sum_{j=0}^\infty \frac{(h \tau \| A \|)^{j}}{j!}  \sum_{l = 0}^\infty \frac{ (\tau \|A\|^l) }{ l!}  \dd \tau \\
& \leq h \|A \| \left ( \sum_{i=0}^\kappa \frac{1}{\alpha_i} \right ) \int_0^1 e^{ \tau \|A \| (1 + h) } \dd \tau \\
& \leq \frac{h}{1 + h} e^{ \|A \| (1 + h) } \left ( \sum_{i=0}^\kappa \frac{1}{\alpha_i} \right ) .
\end{align} 
Thus $\| \mc{R}_h \| \to 0$ as $h \to 0$, and so there exists $h_\ast > 0$ such that
\be
\| \mc{R}_h \| \leq \frac{1}{2 \| \mc{H} \|} \qquad \text{for all} \; h \leq h_\ast .
\ee
Then, for $h \leq h_\ast$, $\|  \mc{H} \mc{R}_h \| \leq \frac{1}{2} < 1$. The statement follows.

\end{proof}

\subsubsection{Extent of Trajectories}

The following results will allow us to estimate the maximal extent of a controlled trajectory.

\begin{lemma} \label{lem:extent}
Let $s,t \in \R$ with $s < t$.
For all $h_\ast>0$ sufficiently small,
there exists a uniform constant $C_{q}$ such that, if $h(t-s) \leq h_\ast$ and $\eta : [s,t] \to \R^N$ satisfies
\be 
\dot \eta = A_h \eta + P_0 \beta, \qquad \eta(s) = y, \; \eta(t) = x, \qquad \beta \in L^{q'}(s,t),
\ee 
then, for all $\tau \in (s,t)$,
	\begin{align}
	(t-s)^{-1/q} | S(t-s)^{-1} \left ( e^{(t-\tau) A_h} \eta(\tau) - e^{(t-s) A_h} y \right ) | & \leq C_q \| \beta \|_{L^{q'}(s,t)} \\
	(t-s)^{-1/q} | S(t-s)^{-1} \left (x - e^{(t-\tau) A_h} \eta(\tau)  \right ) | & \leq C_q  \| \beta \|_{L^{q'}(s,t)} .
	\end{align}
\end{lemma}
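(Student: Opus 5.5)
The plan is to use the Duhamel representation from Lemma~\ref{lem:eta_rep}(ii) on the subinterval $[s,\tau]$. This gives
\be
e^{(t-\tau)A_h}\eta(\tau) - e^{(t-s)A_h} y = \int_s^\tau e^{(t-\sigma)A_h} P_0 \beta_\sigma \dd \sigma .
\ee
Analogously, the representation on $[\tau,t]$ gives
\be
x - e^{(t-\tau)A_h}\eta(\tau) = \int_\tau^t e^{(t-\sigma)A_h} P_0\beta_\sigma \dd\sigma .
\ee
Both quantities are therefore of the form $\int_s^t e^{(t-\sigma)A_h}P_0 (\one_{I}\beta)_\sigma \dd\sigma$, with $I=(s,\tau)$ or $(\tau,t)$. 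Writing $\tilde\beta=\one_I\beta$, we have $\|\tilde\beta\|_{L^{q'}(s,t)}\le\|\beta\|_{L^{q'}(s,t)}$. This reduces both estimates to a single bound for the linear map $\beta\mapsto\int_s^t e^{(t-\sigma)A_h}P_0\beta_\sigma\dd\sigma$ on the whole interval $[s,t]$.

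Next, I rescale exactly as in Lemma~\ref{lem:eta_rep}(ii). Set $T=t-s$ and change variables $\sigma = t-\theta T$. Then Lemma~\ref{lem:eta_rep}(i) gives
\be
\int_s^t e^{(t-\sigma)A_h}P_0\tilde\beta_\sigma\dd\sigma = T\,S(T)\int_0^1\left(e^{\theta A_0}+R_A(\theta;hT)\right)P_0\hat\beta_\theta\dd\theta ,
\ee
where $\hat\beta_\theta=\tilde\beta_{t-\theta T}$. Applying $T^{-1/q}S(T)^{-1}$ to both sides, the left-hand quantity in the statement equals $T^{1-1/q}\,(\mathscr G+\mathscr R_{hT})\hat\beta$, with $\mathscr G,\mathscr R_h$ the operators from the proof of Lemma~\ref{lem:norms}.

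To bound this, recall that $\mathscr G$ is a bounded operator $L^{q'}(0,1)\to\R^N$, since $e^{\theta A_0}$ is bounded on $[0,1]$. The proof of Lemma~\ref{lem:norms} also shows
\be
\|\mathscr R_h\|\le (e^{h\|A\|}-1)e^{\|A\|},
\ee
which is at most $1$, say, for $hT\le h_\ast$ with $h_\ast$ small. Hence
\be
|(\mathscr G+\mathscr R_{hT})\hat\beta|\le C\|\hat\beta\|_{L^{q'}(0,1)} .
\ee
Finally, the change of variables gives
\be
\|\hat\beta\|_{L^{q'}(0,1)} = T^{-1/q'}\|\tilde\beta\|_{L^{q'}(s,t)} .
\ee
Combining the factors yields $T^{1-1/q-1/q'}=T^0=1$, so both inequalities hold with a constant $C_q$ independent of $s,t,h$.

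I expect the only delicate point to be bookkeeping of the exponents of $T$ and making sure the smallness $hT\le h_\ast$ is exactly what controls $\mathscr R_{hT}$ uniformly. Neither step is a genuine obstacle. Invertibility is not needed here, only boundedness, so any $h_\ast$ (indeed any bounded range of $hT$) works, with $C_q$ depending on the bound.
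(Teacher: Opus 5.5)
Your proof is correct. It shares the paper's first step: restricting $\beta$ to $(s,\tau)$ or $(\tau,t)$ is exactly the paper's $\beta_1=\beta\one_{(s,\tau)}$ and $\beta_2=\beta\one_{(\tau,t)}$. Where you differ is in how you bound the resulting endpoint displacement.

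The paper argues through the value function. The truncated control steers $y$ to $e^{(t-\tau)A_h}\eta(\tau)$, respectively $e^{-(\tau-s)A_h}\eta(\tau)$ to $x$. So $J_h$ between these endpoints is at most $\frac{1}{q'}\|\beta\|_{L^{q'}}^{q'}$, and the lower bound in Proposition~\ref{prop:tstar} turns this into the claimed estimate.

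You instead write the displacement explicitly, via Duhamel and the rescaling of Lemma~\ref{lem:eta_rep}, as $T\,S(T)(\mathscr G+\mathscr R_{hT})\hat\beta$. You then use only that $\mathscr G+\mathscr R_{hT}$ is bounded from $L^{q'}(0,1)$ to $\R^N$, uniformly in $hT$. Your exponent bookkeeping $T^{1-1/q-1/q'}=1$ is right.

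In effect you prove the \emph{easy half} of Proposition~\ref{prop:tstar} directly: the lower bound on $J_h$ is precisely the boundedness of this forward map. The paper's proof of that proposition instead goes through the right inverse $\mathscr H$ and the invertibility of $I+\mathscr H\mathscr R_h$, which is what forces $h_\ast$ to be small. Your route is therefore more elementary and self-contained. As you note, it also gives the estimate for any bounded range of $hT$, with $C_q$ depending on that bound. The paper's route is shorter on the page because it reuses Proposition~\ref{prop:tstar}, and it makes the control-theoretic meaning explicit: a trajectory that strays far must spend a lot of control energy.
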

\begin{proof}
	
	Let $\tau \in (s,t)$. Consider $\beta_1 : = \beta \one_{(s,\tau)}$. Then $\beta_1 \in L^{q'}(s,t)$, and the path $\eta_1 : [s,t] \to \R^N$ defined as the solution of the ODE
	\be 
		\dot \eta_1 = A_h \eta_1 + P_0 \beta_1, \quad \eta_1(s) = y
	\ee 
	satisfies $\eta_1(\tau') = \eta(\tau')$ on the time interval $\tau' \in [s,\tau]$, and
	\be
	\eta_1(\tau') = e^{(\tau'-\tau) A_h} \eta_1(\tau) =  e^{(\tau'-\tau) A_h} \eta(\tau) \quad \tau ' \in [\tau, t] .
	\ee
	In particular $\eta_1(t) = e^{(t-\tau) A_h} \eta(\tau)$.
	
	It follows that
	\be
	J_r(s,t, y, e^{(t-\tau) A_h} \eta(\tau)) \leq \frac{1}{q'} \| \beta_1 \|_{L^{q'}}^{q'}
	\ee
	and hence
	\be
	\widetilde J_r (t-s, e^{(t-\tau) A_h} \eta(\tau) - e^{(t-s) A_h} y) \leq \frac{1}{q'} \| \beta \|_{L^{q'}}^{q'} .
	\ee
	
	Then, by Proposition~\ref{prop:tstar}, for all small $h_\ast > 0$ there exists $C_q > 0$ such that for all $r$ with $h (t-s) \leq h_\ast$,
	\be
	(t-s)^{-1/q} | S(t-s)^{-1} \left ( e^{(t-\tau) A_h} \eta(\tau) - e^{(t-s) A_h} y \right ) | \leq C_q \| \beta \|_{L^{q'}} 
	\ee
	
	\be
	(t-s)^{-1/q} | S(t-s)^{-1} e^{t A_h} \left ( e^{-\tau A_h} \eta(\tau) - e^{-s A_h} y \right ) | \leq C_q \| \beta \|_{L^{q'}} 
	\ee
	
	Similarly, considering the control $\beta_2 : = \beta \one_{(\tau, t)}$ and defining the path $\eta_2 : [s,t] \to \R^N$ to be the solution of the ODE
	\be
		\dot \eta_2 = A_h \eta_2 + P_0 \beta_2, \quad \eta_2(t) = x
	\ee
	produces a trajectory satisfying $\eta_2(\tau ') = \eta(\tau ')$ for all $\tau ' \in [\tau, t]$ and $\eta_2(\tau ') = e^{- (\tau - \tau ' )A_h} \eta(\tau)$ for $\tau ' \in [s, \tau]$. Thus
	\begin{align}
	(t-s)^{-1/q} | S(t-s)^{-1} \left (x - e^{(t-\tau) A_h} \eta(\tau)  \right ) | & \leq C_q J_r(s,t, e^{-(\tau - s) A_h} \eta(\tau), x)^{1/q'} \\
	& \leq C_q  \| \beta \|_{L^{q'}} .
	\end{align}
\end{proof}

\subsection{Viscosity Solutions of the Hamilton-Jacobi Equations} \label{sec:SubSuper}

In this section, we discuss comparison principles and representation formulae for viscosity solutions of Hamilton--Jacobi equations of the form
\be
\partial_t u +  \mc{H}(t, x, \nabla u) = 0
\ee
where $\mc{H}:(0,T)\times\R^N\times\R^N\to\R$ is given by
\be
 \mc{H}(t, x, \xi) : = \langle Ax, \xi \rangle + |P_0 \xi|^q + F.
\ee
Here $T>0$ is a given time horizon, the matrices $A,P_0$ are the ones governing the control system \eqref{intro:control_sys} and $ F:(0,T)\times\R^N\to\R$ is a continuous function.

A Hamiltonian $\mc{H}$ of this form satisfies the standard assumption (cf. \cite{CraIshLio})
\begin{align}
\left | \mc{H}(t, x, \frac{x-y}{\e}) - \mc{H}(s, y, \frac{x-y}{\e}) \right | \leq C_A \frac{|x-y|^2}{\e} + |F(s,y) - F(t,x)| .
\end{align}
The following local comparison principle can therefore be proved by techniques similar to \cite[Theorem II.3.7]{Bardi-CD}.

\begin{lemma} \label{lem:comparison}
Let $T>0$ be a given time horizon, let $\Omega \subset \R^N$ be a bounded domain, and let
$Q = \bigcup_{t \in [0,T]} M(t, \Omega) $, for $M : [0,T] \times \overline{\Omega}\to\R^N$ continuous such that $M(t, \cdot) : \overline{\Omega} \to \R^N$ is a bijection for all $t \in [0,T]$. 
Define the parabolic-like boundary of $Q$ by
\be 
\partial^- Q : = \{ 0 \} \times M(0, \Omega) \cup \bigcup_{t \in [0,T]} M( t, \partial \Omega ) = \partial Q \setminus \{ T \} \times M(T, \Omega) .
\ee 

Let $u_1 \in C(\overline{Q})$ and $u_2 \in C(\overline{Q})$ be, respectively, a sub- and supersolution of 
\be \label{eq:HJB_comparison}
\partial_t u +  \langle Ax, D u \rangle + \frac{\lambda^q}{q} |P_0 Du |^q +  F(t,x) = 0 \qquad (t,x) \in Q ,
\ee
where $\lambda > 0$ and $F \in C(\overline{Q})$.
Suppose that $u_1 \leq u_2$ on $\partial^- Q$. Then $u_1 \leq u_2$ on $\overline{Q}$.
\end{lemma}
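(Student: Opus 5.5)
The proof will follow the classical doubling-of-variables argument for comparison of viscosity sub- and supersolutions (as in \cite[Theorem II.3.7]{Bardi-CD} or \cite{CraIshLio}), adapted to the moving domain $Q$ and the parabolic-like boundary $\partial^- Q$. Two preliminary reductions come first.

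\emph{Strict subsolution.} For $\delta>0$ replace $u_1$ by $u_1^\delta(t,x) := u_1(t,x) - \delta t - \delta/(T-t)$. Then $u_1^\delta$ is a strict subsolution, with left-hand side $\leq -\delta - \delta/(T-t)^2 <0$. It still satisfies $u_1^\delta\le u_2$ on $\partial^-Q$, and it tends to $-\infty$ as $t\to T$. It therefore suffices to show $u_1^\delta \le u_2$ in $Q$ for each $\delta$ and let $\delta\to0$. The blow-up term keeps the maximum away from the top face $\{T\}\times M(T,\Omega)$; that face is excluded from $\partial^-Q$, so no boundary information is available there.

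\emph{Compactness.} Since $\Omega$ is bounded and $M$ is continuous on the compact set $[0,T]\times\overline\Omega$, the set $\overline Q$ is compact, so $u_1,u_2,F$ are bounded and uniformly continuous on it.

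Suppose, for contradiction, that $\sigma:=\max_{\overline Q}(u_1^\delta-u_2)>0$. Consider
\[
\Phi_\e(t,x,s,y) := u_1^\delta(t,x) - u_2(s,y) - \frac{|x-y|^2}{2\e} - \frac{|t-s|^2}{2\e}
\]
on $\overline Q\times\overline Q$, and let $(t_\e,x_\e,s_\e,y_\e)$ be a maximum point. The standard argument gives $|x_\e-y_\e|^2/\e + |t_\e-s_\e|^2/\e \to 0$, and any limit point $(\bar t,\bar x)$ of both pairs maximises $u_1^\delta-u_2$. Since $\sigma>0$ and $u_1^\delta\le u_2$ on $\partial^-Q$, the point $(\bar t,\bar x)$ is not on $\partial^-Q$. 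It is also not on the top face, by the blow-up of $u_1^\delta$. Hence it is interior, and so are $(t_\e,x_\e)$ and $(s_\e,y_\e)$ for $\e$ small, because $\partial^-Q\cup(\{T\}\times M(T,\Omega))=\partial Q$ is closed.

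At these interior points, use $(t,x)\mapsto u_2(s_\e,y_\e)+\frac{|x-y_\e|^2}{2\e}+\frac{|t-s_\e|^2}{2\e}$ as a test function from above for $u_1^\delta$, and the analogous function from below for $u_2$. Both test functions have the same momentum $p_\e=(x_\e-y_\e)/\e$ and the same time derivative $(t_\e-s_\e)/\e$. Subtracting the two viscosity inequalities, the time terms and the $\frac{\lambda^q}{q}|P_0p_\e|^q$ terms cancel exactly, which is why it matters that the nonlinearity is independent of $(t,x)$. What remains is
\[
\delta \le \langle A(y_\e-x_\e),p_\e\rangle + F(s_\e,y_\e)-F(t_\e,x_\e) \le \|A\|\frac{|x_\e-y_\e|^2}{\e} + \omega_F(|x_\e-y_\e|+|t_\e-s_\e|),
\]
which is the structural estimate quoted before the lemma. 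Letting $\e\to0$ gives the contradiction $\delta\le 0$.

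The main point needing care is the boundary localisation: we must confirm that the maximiser of $u_1^\delta-u_2$ on $\overline Q$ lies in the interior, i.e.\ that $\partial Q$ is exactly $\partial^-Q$ together with the top face. I would check this using the hypothesis that each $M(t,\cdot)$ is a continuous bijection of $\overline\Omega$, hence a homeomorphism onto its image, so that $Q$ is a continuous family of homeomorphic slices and its relative interior consists of $M(t,\Omega)$ for $t\in(0,T)$. If this fails for general $M$, I would straighten the domain via $(t,z)\mapsto(t,M(t,z))$ over the cylinder $[0,T]\times\Omega$, or restrict to the cylinders $Q_r^{h,\gamma}$ actually used later, which are images of $\Omega_r^\gamma$ under the smooth flows $e^{tA_h}$.
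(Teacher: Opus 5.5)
Your proposal is correct and follows the route the paper indicates. The paper gives no proof beyond pointing to the doubling-of-variables argument of \cite[Theorem II.3.7]{Bardi-CD} together with the structural estimate $|\mc{H}(t,x,\frac{x-y}{\e})-\mc{H}(s,y,\frac{x-y}{\e})|\le C_A\frac{|x-y|^2}{\e}+|F(s,y)-F(t,x)|$. That is what you carry out, adding the standard strict-subsolution perturbation that also blows up at $t=T$. The boundary localisation you flag is asserted in the statement itself, which identifies $\partial^-Q$ with $\partial Q\setminus(\{T\}\times M(T,\Omega))$. It also follows from invariance of domain applied to $(t,z)\mapsto(t,M(t,z))$, and is trivial for the linear flows $z\mapsto e^{tA_h}z$ used later.
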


\begin{lemma} \label{lem:local_value}
Let $g \in BUC(\R^N)$ and $F \in BUC([0, T] \times \R^N)$ be given bounded and uniformly continuous functions. Define the function $u$ for $(t,x) \in [0, T]\times\R^N$ by
\be
u(t,x) : = \inf_{\beta \in L^{q'}((0,t); \R^N)} \left \{ \int_{0}^t \frac{| \beta_\tau |^{q'}}{q' \lambda^{q'}}  +  F(\tau, \eta^\beta(\tau ; t,x)) \dd \tau + g (\eta^\beta(0 ; t,x)) \right \} ,
\ee
where, for each $\beta \in L^{q'}((0,t); \R^N)$, $\eta^\beta$ is the solution of the ODE
\be 
\dot \eta^\beta(s ; t,x) = A \eta^\beta(s ; t,x) + \beta_s, \; \; s \in (0,t); \qquad \eta^\beta(t) = x .
\ee
Then $u$ is a continuous viscosity solution of \eqref{eq:HJB_comparison} in $(0,T) \times \R^N$.
\end{lemma}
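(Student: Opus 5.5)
The plan is the classical dynamic programming argument for finite-horizon optimal control, as in \cite[Chapter III]{Bardi-CD}. I use exactly the dynamics $\dot\eta^\beta = A\eta^\beta + \beta$ with $\eta^\beta(t)=x$, controls $\beta \in L^{q'}((0,t);\R^N)$, running cost $\frac{|\beta|^{q'}}{q'\lambda^{q'}} + F(\tau,\eta^\beta)$ and initial cost $g(\eta^\beta(0))$. There are three steps: (a) $u$ is bounded and continuous; (b) $u$ satisfies the dynamic programming principle (DPP); (c) the DPP is linearised at a smooth test function to give the sub- and supersolution inequalities.

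\emph{Steps (a) and (b).} Taking $\beta\equiv 0$ gives $u \le T\|F\|_\infty + \|g\|_\infty$. Since the control cost is nonnegative, $u \ge -T\|F\|_\infty - \|g\|_\infty$. Consequently one may restrict to $\varepsilon$-optimal controls satisfying
\be
\|\beta\|_{L^{q'}}^{q'} \le C_0 := C\lambda^{q'}\bigl(T\|F\|_\infty+\|g\|_\infty+1\bigr).
\ee
By Duhamel's formula $\eta^\beta(s) = e^{(s-t)A}x - \int_s^t e^{(s-\tau)A}\beta_\tau \dd\tau$ and H\"older's inequality, such controls satisfy the uniform estimate
\be
|\eta^\beta(s)-e^{(s-t)A}x| \le C|t-s|^{1/q}C_0^{1/q'}.
\ee
Moreover, the dependence on $x$ is $e^{(s-t)A}$, which is Lipschitz. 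Continuity in $x$ then follows from the uniform continuity of $F$ and $g$ applied along the same control. For continuity in $t$, I would shift or truncate controls: for $t<t'$, use $\beta$ on $(0,t)$ followed by $0$ on $(t,t')$, and conversely restrict a control to a shorter interval. The uniform continuity of $F,g$ and the trajectory estimate above make the resulting error $o(1)$. The DPP states that for $0<\delta<t$,
\be
u(t,x) = \inf_{\beta}\Bigl\{\int_{t-\delta}^t \frac{|\beta_\tau|^{q'}}{q'\lambda^{q'}} + F(\tau,\eta^\beta(\tau))\dd\tau + u\bigl(t-\delta,\eta^\beta(t-\delta)\bigr)\Bigr\}.
\ee
It is proved in the standard way: split a control at time $t-\delta$ for one inequality, and concatenate with an $\varepsilon$-optimal control for $u(t-\delta,\cdot)$ for the other. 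Concatenation is legitimate because the dynamics are autonomous in $\beta$ and the cost is additive.

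\emph{Step (c).} Let $\varphi\in C^1$ and suppose $u-\varphi$ has a local maximum at $(t,x)$. For a constant control $\beta_\tau\equiv b$, the DPP gives
\be
\varphi(t,x)-\varphi(t-\delta,\eta(t-\delta)) \le \int_{t-\delta}^t \Bigl(\tfrac{|b|^{q'}}{q'\lambda^{q'}} + F\Bigr)\dd\tau.
\ee
Since $\eta(t-\delta) = x-\delta(Ax+b)+o(\delta)$, dividing by $\delta$ and letting $\delta\to 0$ yields
\be
\partial_t\varphi + \langle Ax+b,\nabla\varphi\rangle - \tfrac{|b|^{q'}}{q'\lambda^{q'}} - F \le 0 .
\ee
Taking the supremum over $b$ and using the Legendre transform
\be
\sup_b\Bigl\{\langle b,p\rangle - \tfrac{|b|^{q'}}{q'\lambda^{q'}}\Bigr\} = \tfrac{\lambda^q}{q}|p|^q
\ee
gives the subsolution inequality. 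For the supersolution inequality at a local minimum, take $\varepsilon\delta$-optimal controls in the DPP. The uniform $L^{q'}$ bound confines the trajectories to a neighbourhood of size $O(\delta^{1/q})$. Then estimate
\be
\frac1\delta\int_{t-\delta}^t\Bigl(\partial_t\varphi+\langle A\eta+\beta,\nabla\varphi\rangle - \tfrac{|\beta|^{q'}}{q'\lambda^{q'}} - F\Bigr)\dd\tau
\ee
from above pointwise by the Hamiltonian evaluated along the trajectory, using the Fenchel inequality, and use continuity of $\nabla\varphi$ and $F$ to pass to the limit.

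\emph{Main obstacle.} The genuinely delicate point is matching the Hamiltonian produced by this computation with the one displayed in \eqref{eq:HJB_comparison}. With the dynamics exactly as stated ($\beta$ ranging over all of $\R^N$) and time running forward to $(t,x)$, step (c) yields the nonlinearity $\frac{\lambda^q}{q}|\nabla u|^q$, with the full gradient rather than $|P_0\nabla u|^q$, and the running cost enters as $-F$. The sign of the nonlinearity and of $F$ must therefore be tracked carefully. I would first verify the Legendre-transform step and the direction of the time increment, since these determine which form of \eqref{eq:HJB_comparison} is obtained. If the identification with $\frac{\lambda^q}{q}|P_0\nabla u|^q + F$ is intended, the same three steps go through verbatim once the Hamiltonian is read as the one given by this computation. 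The remaining technical issue, namely the $\delta\to0$ limit for the supersolution property with unbounded ($L^{q'}$) controls, is handled by the a priori trajectory estimate from step (a).
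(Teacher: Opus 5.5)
Your dynamic-programming argument is correct. It is the standard route of \cite{Bardi-CD}, which is what the paper relies on implicitly, since it states Lemma~\ref{lem:local_value} without proof.

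The mismatch you flag is a genuine typo in the statement, and you should commit to your computation rather than hedge. The lemma is actually used in the proof of Proposition~\ref{prop:ImproveOsc}, with $F=\varepsilon f$ for $u^\ast_g$ and $F=-\varepsilon$ for $u_\ast^\ell$. For that use, the dynamics must read $\dot\eta=A\eta+P_0\beta$, and the conclusion is that $u$ solves $\partial_t u+\langle Ax,\nabla u\rangle+\frac{\lambda^q}{q}|P_0\nabla u|^q-F=0$. Your three steps give this verbatim, since $\sup_{b}\{\langle P_0 b,p\rangle-\frac{|b|^{q'}}{q'\lambda^{q'}}\}=\frac{\lambda^q}{q}|P_0p|^q$. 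By contrast, no reading of the argument produces the printed $+F$.
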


\section{H\"older Regularity}\label{sec:holder}

\subsection{Small Source Terms}

We begin by proving H\"older regularity in the case where the zero-order terms of the Hamilton--Jacobi equations are small, and the drift matrix is close to its principal part.

\begin{prop} \label{prop:small_Holder}

Let $p  >  N/q + 1 + \sum_{j=1}^\kappa j n_j$.
There exist $\alpha \in (0,1)$ and constants $C, \e_\ast, h_\ast > 0$ 
such that for all
$0 \leq h \leq h_\ast$ 
and $0 \leq \e  \leq \e_\ast$
the following holds. 

Let $Q^h_1$ be defined as in Definition \ref{def:Q_r}. Let $u \in C(\ov{Q}^h_1)$ be a viscosity supersolution of the Hamilton--Jacobi equation
\be
\label{eq:HJ_ImpOfOsc_sup}
\partial_t u + \left \langle A_h x ,  \nabla_x u \right \rangle + \frac{\Lambda^q}{q} | P_0 \nabla_x u |^q + \e = 0 \qquad \text{in} \; Q^h_1
\ee
and a viscosity subsolution of the Hamilton--Jacobi equation
\be
\label{eq:HJ_ImpOfOsc_sub}
\partial_t u + \left \langle A_h x , \nabla_x u \right \rangle + \frac{\lambda^q}{q} |P_0 \nabla_x u |^q -  \e f = 0 \qquad \text{in} \; Q^h_1 ,
\ee
for some non-negative continuous function $f \geq 0$ satisfying $\| f \|_{L^p} = 1$.

Suppose that $\osc_{Q_1^h} u \leq 1$. Then
\be
|u(t,x) - u(0,0)| \leq C \omega_\alpha(t,x) \qquad \forall (t,x) \in Q^h_1,
\ee
where the definition of the modulus of continuity $\omega_\alpha$ is given in Definition \ref{def:omega}.
\end{prop}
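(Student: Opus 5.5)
The plan is the standard De Giorgi-type iteration: an improvement-of-oscillation step on $Q^h_1$, followed by rescaling and iteration.

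\textbf{Step 1: improvement of oscillation.} The main ingredient is the following one-step statement (the paper's Proposition~\ref{prop:ImproveOsc}). There exist $\theta\in(0,1)$, $\rho\in(0,1)$ and $\e_\ast,h_\ast>0$ such that, under the hypotheses of the proposition with $\osc_{Q_1^h}u\le 1$, we have $\osc_{Q^{h}_{\rho}} u\le 1-\theta$. Because of the sub/super structure this splits into two halves, according to whether $u$ is mostly close to its supremum or its infimum. Normalize so that $u$ takes values in $[0,1]$.

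\emph{Lowering the upper bound.} Fix a point $(t,x)\in Q^h_\rho$. Connect it backwards in time to points near the bottom of $Q^h_1$ using admissible trajectories of $\dot\eta=A_h\eta+P_0\beta$ that stay inside $Q_1^h$. Their existence and cost bounds come from Proposition~\ref{prop:tstar}, and they stay inside the cylinder by Lemma~\ref{lem:extent}. Along such a trajectory, the subsolution inequality \eqref{eq:HJ_ImpOfOsc_sub} gives, via the dynamic programming inequality obtained from Lemma~\ref{lem:comparison} and Lemma~\ref{lem:local_value},
\[
u(t,x)\le u(\text{start})+\int \frac{|\beta|^{q'}}{q'\lambda^{q'}}+\e\int f(\tau,\eta_\tau)\,d\tau .
\]
Since $f$ is only in $L^p$, I would average this over the cone of curved trajectories from Proposition~\ref{prop:flows}. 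The Jacobian bound there, $|\det\nabla\Phi^{-1}(s)|\lesssim (t/s)^{N\alpha^\ast+\sum jn_j}$, together with H\"older's inequality, controls $\int f$ provided $p>N/q+1+\sum jn_j$: choose $\alpha^\ast$ close to $1/q$ so that $(t/s)^{\cdot}\in L^{p'}$. If $u\le 1/2$ on a set of positive proportion near the base, the averaged starting value is at most $1-c$, and for $\e$ small this yields $u\le 1-\theta$ on $Q^h_\rho$.

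\emph{Raising the lower bound.} In the complementary case, compare $u$ from below with the optimal-control solution on $Q_1^h$ with boundary data built from $u$ (Lemma~\ref{lem:local_value} and comparison with \eqref{eq:HJ_ImpOfOsc_sup}). For the lower bound one must use \emph{optimal} trajectories. The task is to show that an optimal trajectory from a point of $Q^h_\rho$ either reaches the bottom of the cylinder, where $u$ is large on most of the set, or hits the lateral boundary only at a large control cost. The latter follows from Proposition~\ref{prop:tstar}: leaving $\Omega^\gamma_1$ from $\Omega_\rho^\gamma$ in time $\lesssim\rho$ costs $\gtrsim \rho^{-q'/q}$. This forces $u\ge \theta$ on $Q^h_\rho$. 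The term $\e$ costs at most $\e$.

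\textbf{Step 2: iteration.} Set $u_k(t,x)=(1-\theta)^{-k}u\circ\widetilde D^{(\gamma)}_{\rho^k}(t,x)$ with $\gamma=1/q+\alpha/q'$ and $(1-\theta)=\rho^{\alpha}$. By Corollary~\ref{rmk:HJ_rescaled_Ar} and Remark~\ref{rmk:CylinderTransformation}, $u_k$ satisfies the same pair of inequalities on $Q^{h\rho^k}_1$, with drift $A_{h\rho^k}$ (so $h$ only decreases) and source $\e\rho^{k\delta}f^{[\cdot]}$ with $\delta>0$ for $\alpha$ in the range \eqref{alpha_range}. Renormalizing $f$ to unit $L^p$ norm only reduces $\e$. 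Hence Step 1 applies at every scale, giving $\osc_{Q^{h}_{\rho^k}}u\le(1-\theta)^k=\rho^{k\alpha}$. Finally, for $(t,x)\in Q^h_1$ pick $k$ with $(t,x)\in Q^h_{\rho^k}\setminus Q^h_{\rho^{k+1}}$. Using $|S(\rho^k)^{-1}e^{-tA_h}x|<\rho^{k\gamma}$ and $\|e^{-tA_h}-I\|\lesssim|t|$, this $k$ satisfies $\rho^{k\alpha}\lesssim\omega_\alpha(t,x)$, since $|P_jx|\gtrsim\rho^{(k+1)(\gamma+j)}$ or $|t|\ge\rho^{k+1}$.

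\textbf{Main obstacle.} I expect the lower-bound half of Step 1 to be hardest. It requires controlling where optimal trajectories exit the anisotropic cylinder, uniformly in $h\le h_\ast$. A secondary difficulty is matching exponents in the $L^p$ averaging over curved cones.
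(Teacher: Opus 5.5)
Your architecture matches the paper's. Step~2 is Lemma~\ref{lem:OscIter}: you iterate Proposition~\ref{prop:ImproveOsc} through $u\mapsto\rho^{-\alpha}u\circ\widetilde D^{(\gamma)}_\rho$, which replaces $h$ by $\rho h$ and shrinks the source. Your last step is the content of Lemma~\ref{lem:inf_rho}.

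That last step fails as you justify it. With $r=\rho^{k+1}$, you need: if $|t|\le c\,r$ and $|P_i x|\le c\,r^{\gamma+i}$ for all $i$, then $|S(r)^{-1}e^{-tA_h}x|<r^{\gamma}$, i.e.\ $(t,x)\in Q^{h,\gamma}_r$. The operator-norm bound $\|e^{-tA_h}-I\|\lesssim|t|$ only gives $|P_j(e^{-tA_h}-I)x|\lesssim|t|\,|x|\lesssim c^2r^{1+\gamma}$. This is not $O(r^{\gamma+j})$ once $j\ge 2$. Using $(t,x)\in Q^{h,\gamma}_{\rho^k}$ does not help, since it only gives $|x|\lesssim\rho^{k\gamma}$. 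So for $\kappa\ge2$ you cannot conclude $\rho^{k\alpha}\lesssim\omega_\alpha(t,x)$ this way.

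The missing ingredient is the anisotropic block structure of the flow, $P_je^{-tA_h}P_i=O(|t|^{j-i})$ for $i<j$. It is conveniently packaged as $S(r)^{-1}e^{-tA_h}S(r)=e^{-(t/r)A_{hr}}$, whose norm is bounded uniformly for $|t|\le r\le 1$ and $hr\le h_\ast$ (Lemma~\ref{lem:eta_rep}(i), or simply boundedness of $A_{h'}$ for $h'\le h_\ast$). This yields $|S(r)^{-1}e^{-tA_h}x|\le C|S(r)^{-1}x|\le C'c\,r^{\gamma}$, and choosing $c$ small closes the argument. This is precisely the proof of Lemma~\ref{lem:inf_rho}, and with this replacement your conclusion goes through.

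Some smaller points.
\begin{enumerate}
\item Since $\gamma=\frac1q+\frac{\alpha}{q'}$ is defined from $\theta$ and $\rho$, Step~1 must provide $\theta,\rho$ independent of $\gamma\in[\frac1q,1]$, otherwise the argument is circular. Proposition~\ref{prop:ImproveOsc} is stated with this uniformity, but your formulation of Step~1 suppresses $\gamma$.
\item Requiring both $1-\theta=\rho^{\alpha}$ and \eqref{alpha_range} may be inconsistent. Either decrease $\theta$, or take $\alpha$ as the minimum in \eqref{def:alpha_Holder} and use only $\rho^{-\alpha}(1-\theta)\le1$.
\item If your sketch of Step~1 is meant as a proof, its dichotomy is not the one that works. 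Optimal trajectories can end anywhere on the base, so the lower bound needs $u(-1,\cdot)$ bounded below on \emph{all} of $2e^{-A_h}\Omega^\gamma_\rho$, not on most of it. Conversely, the upper bound needs only \emph{one} point $y$ there with $u(-1,y)$ small, because the curved cone has both endpoints fixed and the averaging serves only to control $\int f$.
\item Trajectories ending outside $2e^{-A_h}\Omega^\gamma_\rho$ may take time of order one and need only cross an anisotropic separation of order $\rho^\gamma$. Their cost is therefore bounded below merely by $c\,\rho^{q'(1+\kappa)}$, not by $\rho^{-q'/q}$, and this is what dictates how small $\theta$ must be.
\end{enumerate}
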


This proposition can be generalised from the cylinder $Q^h_1$ to other open sets by using the left translation operators defined in \eqref{def:LT}. We obtain the following corollary.

\begin{cor} \label{cor:HolderSmall}
Let $p  >  N/q + 1 + \sum_{j=1}^\kappa j n_j$.
There exist $\alpha \in (0,1)$ and constants $C, \e_\ast, h_\ast > 0$ 
such that for all 
$0 \leq h \leq h_\ast$ 
and $0 \leq \e  \leq \e_\ast$
the following holds. 

Let $Q^h_1$ be defined as in Definition \ref{def:Q_r}. Let $\ms{U} \subset \R \times \R^N$ be an open set and suppose that there exists a subset $\ms{V} \subset \ms{U}$ such that
\be 
\bigcup_{(t,x) \in \ms{V}} l_{(t,x)}^h \ov{Q}^h_1
\subset \ms{U} .
\ee
Let $u \in C(\ms{U})$ be a viscosity supersolution of the Hamilton--Jacobi equation
\be
\label{eq:HJ_ImpOfOsc_sup}
\partial_t u + \left \langle A_h x ,  \nabla_x u \right \rangle + \frac{\Lambda^q}{q} | P_0 \nabla_x u |^q + \e = 0 \qquad \text{in} \; \ms{U}
\ee
and a viscosity subsolution of the Hamilton--Jacobi equation
\be
\label{eq:HJ_ImpOfOsc_sub}
\partial_t u + \left \langle A_h x , \nabla_x u \right \rangle + \frac{\lambda^q}{q} |P_0 \nabla_x u |^q -  \e f = 0 \qquad \text{in} \; \ms{U} ,
\ee
for some non-negative continuous function $f \geq 0$ satisfying $\| f \|_{L^p} = 1$.

Assume that $\osc_{\ms{U}} u \leq 1$. Then, for all $(s,y), (t,x) \in \ms{V}$ with $s \leq t$,
\be
|u(t,x) - u(s,y)|  \leq C \omega_\alpha \left ( (t,x)^{-1}_h \diamond_h (s,y) \right ) .
\ee

\end{cor}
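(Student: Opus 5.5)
The plan is to reduce Corollary~\ref{cor:HolderSmall} to Proposition~\ref{prop:small_Holder} by a left translation. Fix $(s,y),(t,x)\in\ms{V}$ with $s\le t$, and define
\be
v := u\circ l^h_{(t,x)} \qquad \text{on } \ov{Q}^h_1 .
\ee
By hypothesis $l^h_{(t,x)}\ov{Q}^h_1\subset\ms{U}$, so $v\in C(\ov{Q}^h_1)$. Moreover
\be
\osc_{Q^h_1} v\le \osc_{\ms{U}} u\le 1 .
\ee

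\textbf{Step 1: the translated equations.} We check that $v$ is a viscosity supersolution of \eqref{eq:HJ_ImpOfOsc_sup} and a subsolution of \eqref{eq:HJ_ImpOfOsc_sub} in $Q^h_1$, with source $f\circ l^h_{(t,x)}$.

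The invariance identity displayed after \eqref{def:LT} was stated for differentiable functions. To transfer it to viscosity solutions, note that $l^h_{(t,x)}$ is an affine diffeomorphism of $\R\times\R^N$, namely $(\tau,z)\mapsto(t+\tau,\,z+e^{\tau A_h}x)$. Hence a test function $\phi$ touching $v$ at a point corresponds to $\phi\circ(l^h_{(t,x)})^{-1}$ touching $u$ at the image point, and the identity applies to the smooth test function. Two features make this work: $P_0\nabla_z$ is unchanged because the spatial part is a translation, and the drift term transforms correctly because $\partial_\tau(e^{\tau A_h}x)=A_h e^{\tau A_h}x$.

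For the source, the translation has unit Jacobian, so
\be
\|f\circ l^h_{(t,x)}\|_{L^p(Q^h_1)}\le \|f\|_{L^p}=1 .
\ee
The proposition is stated with norm exactly $1$. If the norm $m$ is strictly less than $1$, I would write $\e f\circ l = (\e m)\,(f\circ l/m)$, which gives norm one and $\e m\le \e_\ast$. If $m=0$, the subsolution inequality holds with any admissible nonnegative source, for instance $\e\cdot 0\le \e g$ with $\|g\|_{L^p}=1$.

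\textbf{Step 2: locating the point.} Set
\be
(\tau,z):=(t,x)^{-1}_h\diamond_h(s,y)=(s-t,\; y-e^{(s-t)A_h}x) .
\ee
Then $l^h_{(t,x)}(\tau,z)=(s,y)$ and $l^h_{(t,x)}(0,0)=(t,x)$, so the proposition applied to $v$ gives
\be
|u(s,y)-u(t,x)|=|v(\tau,z)-v(0,0)|\le C\omega_\alpha(\tau,z) ,
\ee
which is the claim, provided $(\tau,z)\in Q^h_1$.

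We have $\tau=s-t\le 0$, but $(\tau,z)\in Q^h_1$ requires in addition $\tau\ge-1$ and $e^{-\tau A_h}z\in\Omega_1$. I expect this to be the main obstacle, since the statement as written does not guarantee it.

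To handle it, first note that $e^{-\tau A_h}z = e^{(t-s)A_h}y-x$. When $(s,y)$ does lie in $l^h_{(t,x)}\ov{Q}^h_1$, Step 2 applies directly. Otherwise $\omega_\alpha(\tau,z)$ is bounded below by a constant $c>0$: either $|\tau|>1$, or the spatial component is at least comparable to $1$ in some block (using $\|e^{\pm A_h}\|\le C$ uniformly for $h\le h_\ast$). In that case
\be
|u(s,y)-u(t,x)|\le\osc_{\ms{U}} u\le 1\le c^{-1}\omega_\alpha(\tau,z) ,
\ee
and it suffices to enlarge $C$. For the lower bound $c$, I would use that each term $|P_jz|^{\alpha/(\alpha/q'+1/q+j)}$ is at least a fixed power of $|P_jz|$, and that $|z|\gtrsim 1$ whenever $|e^{-\tau A_h}z|\ge 1$ with $|\tau|\le 1$.
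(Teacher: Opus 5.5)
Your proposal is correct and follows the paper's proof. You translate by $l^h_{(t,x)}$ and apply Proposition~\ref{prop:small_Holder} to $u\circ l^h_{(t,x)}$ when $(t,x)^{-1}_h\diamond_h(s,y)\in\ov{Q}^h_1$; otherwise you combine $\osc_{\ms{U}}u\le 1$ with a uniform lower bound on $\omega_\alpha$ off the cylinder, which the paper takes from Lemma~\ref{lem:inf_rho} and you derive directly.

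One small tidy-up in your normalisation step: do not replace $\e$ by $\e m$ in the subsolution only, since a supersolution for $+\e$ need not be one for $+\e m$. Instead use $f\circ l^h_{(t,x)}\le m^{-1} f\circ l^h_{(t,x)}$ together with $\e\ge 0$, so the same $\e$ works in both equations with the normalised source.
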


The strategy of the proof is based on the approach of \cite{Cardaliaguet-Silvestre}: we first establish an {\it improvement of oscillation} property for $u$ (Subsection~\ref{sec:ImpOsc}), by constructing suitable sub- and supersolutions for the Hamilton-Jacobi equations. Then, an iteration argument shows that $u$ is H\"older continuous (Subsection~\ref{sec:Iter}).

\subsubsection{Improvement of Oscillation} \label{sec:ImpOsc}

\begin{prop} \label{prop:ImproveOsc}
Let $p  >  N/q + 1 + \sum_{j=1}^\kappa j n_j$.
There exist constants $\e_\ast, h_\ast > 0$ and $\delta, \theta \in (0,1)$ such that for all $1/q \leq \gamma \leq 1$, $0 \leq \e \leq \e_\ast$ and $0 \leq h \leq h_\ast$ the following holds. 

Let $Q^r_1$ be defined as in Definition \ref{def:Q_r}. Let $u \in C(\ov{Q}^h_1)$ be a viscosity supersolution of the Hamilton--Jacobi equation
\be
\label{eq:HJ_ImpOfOsc_sup}
\partial_t u + \left \langle A_h x ,  \nabla_x u \right \rangle + \frac{\Lambda^q}{q} | P_0 \nabla_x u |^q + \e = 0 \qquad \text{in} \; Q^h_1
\ee
and a viscosity subsolution of the Hamilton--Jacobi equation
\be
\label{eq:HJ_ImpOfOsc_sub}
\partial_t u + \left \langle A_h x , \nabla_x u \right \rangle + \frac{\lambda^q}{q} |P_0 \nabla_x u |^q -  \e f = 0 \qquad \text{in} \; Q^h_1 ,
\ee
for some non-negative 
function $f \geq 0$ satisfying $\| f \|_{L^p} = 1$.

If
\be
0 \leq u \leq 1 \qquad \text{in} \; Q^h_1,
\ee
then
\be
\osc_{Q^{h, \gamma}_{\delta}} u \leq 1-\theta. 
\ee
\end{prop}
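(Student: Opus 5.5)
The plan follows the Cardaliaguet--Silvestre dichotomy, adapted to the cylinders $Q^{h,\gamma}_r$. Fix a small $\delta$ and consider the intermediate time slab $\{t\in[-1,-1/2]\}$ of $Q^h_1$, with the trajectories taken inside a slightly shrunk cylinder such as $Q^h_{1/2}$-type sets translated backwards. We distinguish two alternatives. Either $u$ is small on some nontrivial set at an earlier time, in which case we improve the upper bound on $Q^{h,\gamma}_\delta$. Or $u$ is large there, in which case we improve the lower bound. In each alternative we obtain $\sup_{Q_\delta}u\le 1-\theta$ or $\inf_{Q_\delta}u\ge\theta$, and either gives $\osc\le 1-\theta$. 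Since $\gamma\ge 1/q$ and $\delta<1$, we have $\Omega^\gamma_\delta\subset\Omega^{1/q}_\delta$. It therefore suffices to prove the result for the largest cylinder, uniformly in $\gamma$. We also use that $\|A_h-A_0\|\le Ch$, so all constants from Proposition~\ref{prop:tstar}, Proposition~\ref{prop:flows} and Lemma~\ref{lem:extent} are uniform for $h\le h_\ast$.

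\emph{Upper bound, via the subsolution inequality and curved trajectories.} Since $u$ is a subsolution of the equation with $\frac{\lambda^q}{q}|P_0\nabla u|^q-\e f$, it satisfies the optimal-control (dynamic programming) inequality. Along any admissible path $\eta$ with $\dot\eta=A_h\eta+P_0\beta$ staying in $Q^h_1$ from $(s,y)$ to $(t,x)$,
\be
u(t,x)\le u(s,y)+\int_s^t\Big(\frac{|\beta_\tau|^{q'}}{q'\lambda^{q'}}+\e f(\tau,\eta_\tau)\Big)\dd\tau .
\ee
We justify this by comparing, via Lemma~\ref{lem:comparison}, with the value function of Lemma~\ref{lem:local_value} on a tube around the path, after mollifying $f$. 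Suppose first that $u\le 1/2$ at some point $(s,y)$ in the early slab. We connect $(s,y)$ to each $(t,x)\in Q^{h,\gamma}_\delta$ by the cone of curved trajectories of Proposition~\ref{prop:flows}, based at $(t,x)$ and running backwards. Item (ii) bounds the control cost by $C|S(t-s)^{-1}(\cdots)|^{q'}\le C$, since $t-s\sim1$. Lemma~\ref{lem:extent} keeps all trajectories in $Q^h_1$, provided the endpoints sit well inside. Averaging over the endpoint parameter $w$ in a small ball, the $f$-integral becomes $\int|f|\,|\det\nabla\Phi^{-1}(s)|$. By item (iii) and H\"older's inequality, this is bounded by $\|f\|_{L^p}$ times the $L^{p'}$ norm of $(t/s)^{N\alpha^\ast+\sum jn_j}$ in $s$. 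That norm is finite because $p>N/q+1+\sum jn_j$ and $\alpha^\ast$ can be chosen close to $1/q$. The averaging requires the set where $u\le1/2$ to have positive measure. So the alternative is phrased as: $|\{u\le1/2\}\cap\text{slab}|\ge\mu$. This gives $u(t,x)\le 1/2+C\cdot(\text{small})+C\e\le 1-\theta$ if $\delta$, $\e_\ast$ and the cost are tuned. The cost is made small by replacing $u\le1/2$ with the estimate $u\le \frac12$ and using $\lambda$; otherwise we rescale the threshold. Here one needs the cost of the pieces of trajectory to be $\le c$ small relative to $1/2$. We achieve this by taking the starting set deep in the past, so that costs scale like $(t-s)^{1-q'}|S^{-1}\cdot|^{q'}$ with bounded $|S^{-1}\cdot|$.

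\emph{Lower bound, via the supersolution inequality and optimal trajectories.} In the complementary case $u\ge1/2$ on most of the slab. We compare $u$ from below with the solution $w$ of the equation with $\frac{\Lambda^q}{q}|P_0\nabla w|^q+\e$ on a subcylinder, using Lemma~\ref{lem:comparison}. The boundary data are $\ge 0$ on the lateral boundary and equal to $u$ (roughly $\ge\frac12\mathbf 1_{\text{good set}}$) at the initial time. The representation of Lemma~\ref{lem:local_value} gives $w(t,x)=\inf_\beta\{\ldots\}$ over optimal trajectories. An optimal trajectory either reaches the lateral boundary, or ends in the bad set, or ends in the good set. In the first two cases, Lemma~\ref{lem:extent} and Proposition~\ref{prop:tstar} force a cost bounded below by some $c_1>0$ when $(t,x)\in Q_\delta$ with $\delta$ small. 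For the bad set, we note that it is small in measure, so either we adjust the data or we use a Lipschitz-in-time cutoff. In the third case the trajectory collects the value $1/2$. Hence $w\ge\min(c_1,1/2)-\e\ge\theta$ on $Q^{h,\gamma}_\delta$.

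The main obstacle is the upper-bound step with only $L^p$ control on $f$. It requires that the Jacobian integrability from Proposition~\ref{prop:flows}(iii) matches exactly the threshold on $p$, that every curved trajectory stays in $Q^h_1$, and that the positive-measure alternative is correctly coupled with the lower-bound case. This is where $\alpha_i\in(1/q,1)$ must be chosen close to $1/q$ and $\delta$ fixed last.
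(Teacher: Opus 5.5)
There is a genuine gap in how you couple the two alternatives. Your reduction to $\gamma=1/q$ is fine. The problem is that your curved cone has its vertex at $(t,x)$ and is averaged over its \emph{other} endpoint, so the upper bound needs $u$ to be small on a set of positive measure. That forces the measure-theoretic dichotomy. In the complementary case you then only know that $u\ge 1/2$ off a set of small measure, and no lower bound follows from this.

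The reason is that the comparison function $\inf_\beta\{\ell(\eta^\beta(-1))+\frac{1}{q'\Lambda^{q'}}\int|\beta|^{q'}-\e(1+t)\}$ is governed by the \emph{pointwise} infimum of the data. An optimal path can end in the bad set at essentially zero cost, since small measure says nothing about where the bad points sit relative to the backward flow line through $(t,x)$. So your claim that ending in the bad set ``forces a cost bounded below'' is false.

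Concretely, take $u(t,x)=\min\{1,M|e^{-tA_h}x|^{q'}\}$. It satisfies $\partial_t u+\langle A_hx,\nabla_x u\rangle=0$ away from the kink, so it is a supersolution of the $\Lambda$-equation for every $M$. For $M$ large it is $\ge 1/2$ outside a set of arbitrarily small measure, yet $\inf_{Q^{h,\gamma}_\delta}u=0$. Such a $u$ is not a subsolution with small $\e f$, so the proposition is not contradicted. But your second case uses only the supersolution property, so it cannot close, and adjusting the data or using a time cutoff does not help.

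The missing idea is to make the alternative pointwise: whether $\inf_{y\in 2e^{-A_h}\Omega^\gamma_\delta}u(-1,y)\le 2\theta$. The upper bound must then start from a \emph{single} point $(-1,y)$. With only $\|f\|_{L^p}$ control, this needs admissible paths with \emph{both} endpoints fixed. The paper does this as follows:
\begin{itemize}
\item It perturbs the optimal path $\eta^\ast$ from $(-1,y)$ to $(t,x)$ by $\Psi(\cdot,w)$.
\item $\Psi$ concatenates a forward curved flow from Proposition~\ref{prop:flows}, joining $(-1,0)$ to $(\frac{t-1}{2},w)$, with a backward one joining $(t,0)$ to the same midpoint.
\item It averages over $w\in\mc{W}_{\e,t}=\e^{ap/N}(1+t)^bS(1+t)\Omega_1$.
\item The Jacobian then degenerates at both ends, and $|\det\nabla_w\Psi^{-1}(\tau)|^{1/(p-1)}$ is integrable exactly when $p>1+N\alpha^\ast+\sum_j jn_j$.
\item The exponents $a,b$ are chosen to balance the extra control cost against the $f$-term, giving $C\e^\mu$.
\end{itemize}
With the pointwise alternative, your lower-bound analysis works as you describe: paths ending outside the small ball cost at least $c\,\delta^{q'(1+\kappa)}$.

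Two further points need fixing.
\begin{itemize}
\item The connecting cost is small only because the starting point lies in the small ball, where it is $\lesssim\lambda^{-q'}\delta^{\gamma q'}$. From a set spread over a slab it is merely $O(\lambda^{-q'})$, and ``rescaling the threshold'' does not fix this.
\item The H\"older step requires $\int|\det\nabla\Phi^{-1}|^{p'-1}\dd\tau<\infty$, because the image of the parameter set shrinks by the factor $|\det\nabla\Phi^{-1}|^{-1}$. It does not require finiteness of the $L^{p'}$ norm of $(t/s)^{N\alpha^\ast+\sum_j jn_j}$.
\end{itemize}
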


\begin{remark}
We give the proof here for the case where $f$ is additionally assumed to be continuous on $\ov{Q^h_1}$. The general case can be obtained by a regularisation argument: for a standard smooth compactly supported kernel $\chi_1:\R\times\R^N$ and for a family of smooth mollifiers defined as $\chi_\delta(t,x) := \delta^{-(N+1)} \chi_1(\delta^{-1}t, \delta^{-1} x)$ ($\delta > 0$), $u^\delta := \chi_\delta \ast u$ is a subsolution of 
\be
\partial_t u^\delta + \left \langle A_h x , \nabla_x u^\delta \right \rangle + \frac{\lambda^q}{q} |P_0 \nabla_x u^\delta |^q -  \e f \ast \chi_\delta -[ \div_x (Ax \chi_\delta) ] \ast u = 0
\ee
and therefore satisfies the upper bounds proved below, with the modified (continuous) source term $\e f \ast \chi_\delta + [ \div_x (Ax \chi_\delta) ] \ast u$. Observe that these bounds depend on the source only through the $L^p$ norm. Since $0 \leq u \leq 1$ by assumption, the modified source converges in $L^p$ as $\delta$ tends to zero to $\e f$, while $u^\delta$ converges pointwise to $u$ by continuity of $u$. Thus the same upper bounds for $u$ can be recovered in this case.
\end{remark}

\begin{proof}

\noindent{\bf Upper Bound.}
By the comparison principle for viscosity sub- and supersolutions (Lemma~\ref{lem:comparison}), if $u^\ast$ is a viscosity supersolution of \eqref{eq:HJ_ImpOfOsc_sub} such that $u \leq u^\ast$ on the parabolic boundary $\partial_- Q^h_{1}$ defined by
\be
\partial_- Q^h_{1} : = \{ - 1 \} \times e^{- A_h}(\Omega_{1}) \cup \bigcup_{t \in [-1, 0]} e^{t A_h} \left ( \partial \Omega_1 \right ) ,
\ee
then $u \leq u^\ast$ in $Q_{1}^h$.

By Lemma~\ref{lem:local_value}, for any $g \in BUC(\R^N)$
the following function  $u^\ast_g$, defined for $(t,x)\in [-1, 0] \times \R^N$ by
\be \label{def:u_upper}
u^\ast_g(t,x) : = \inf_{ \beta \in L^{q'}((-1,t); \R^N)
} \left \{ g(\eta^\beta (-1))+ \frac{1}{q' \lambda^{q'}} \int_{-1}^t |\beta(\tau)|^{q'} \dd \tau  + \e \int_{-1}^t f(\tau, \eta^\beta (\tau)) \dd \tau  \right \} ,
\ee

where, for any $\beta \in L^{q'}((-1,t) ; \R^N)$, $\eta^\beta$ denotes the solution of the ODE
$\dot \eta^\beta = A_h \eta^\beta + P_0 \beta$, $\eta^\beta(t) = x$,
and $f$ is extended continuously to $[-1,0] \times \R^N$ by projection onto the boundary values $f(t,x) = f\left(t, \frac{x}{|e^{-t A_h} x|}\right)$ for $|e^{-t A_h} x| > 1$,
is a viscosity supersolution of \eqref{eq:HJ_ImpOfOsc_sub}. 
We wish to choose $g$ such that $u^\ast_g \geq u$ on $\partial^-Q$; then $u \leq u^\ast_g$ will hold throughout $Q_1^h$.

\begin{figure}[h] 
\centering
\includegraphics[width=0.5\textwidth]{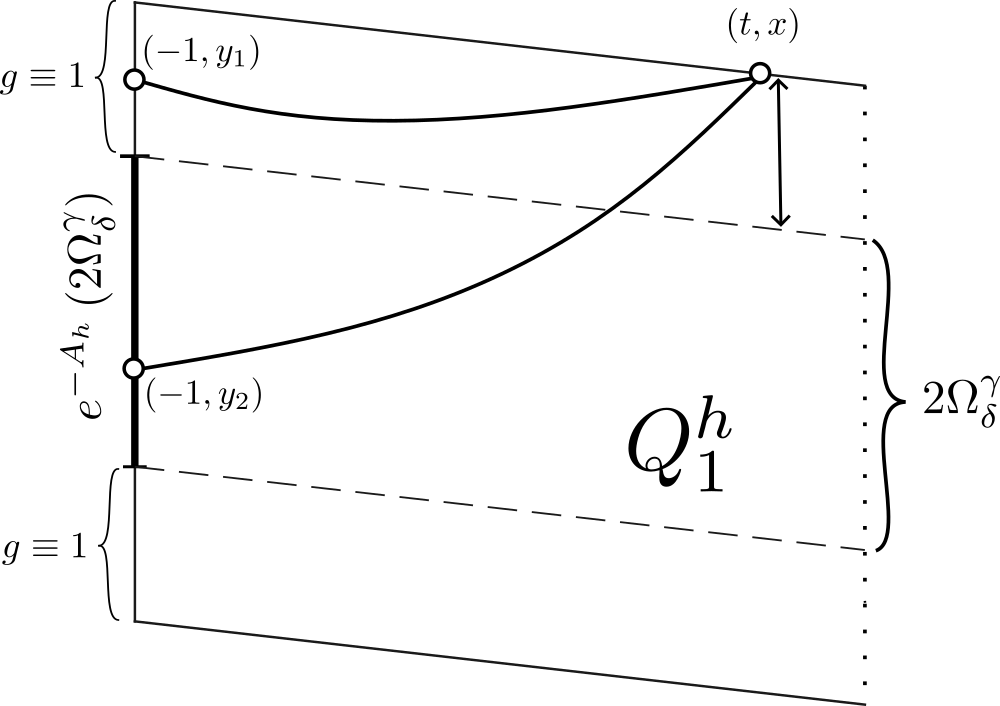}
\caption{ Estimation of the values of $u^\ast_g$ on the boundary subset $\partial^- Q^\gamma_1 \cap \{ t > -1 \}$. (i) If a controlled trajectory connects $(t,x)$ to a point $(-1, y_1)$ where $y_1 \not \in e^{-A_h} (2 \Omega_\delta)$, then the associated cost is at least $g(-1, y_1) = 1$. (ii) If a controlled trajectory connects $(t,x)$ to a point $(-1, y_2)$ where $y_2 \in e^{-A_h} (2 \Omega_\delta)$, then the associated control cost can be bounded from below, since the trajectory must cross from the spatial boundary into the inner cylinder $\bigcup_{t \in [-1, 0)} e^{t A_h} (2 \Omega_\delta^\gamma) $ (region within the dashed lines) over a time interval of length at most 1.
}
\label{fig:g_bdy}
\end{figure}

To achieve this, we begin by restricting our attention to some particular time-independent functions $g$ such that $g \equiv 1$ on $(e^{-A_h}(2 \Omega_\delta))^c$. Further properties of such $g$ functions will be given later. Then,
for any $\beta \in L^{q'}(-1, t)$ such that $\eta^\beta (-1) \not \in e^{-A_h}(2 \Omega_\delta)$ (see Figure~\ref{fig:g_bdy}),
\be \label{est:u_upper_outer}
g(\eta^\beta (-1))+ \frac{1}{q' \lambda^{q'}} \int_{-1}^t |\beta(\tau)|^{q'} \dd \tau  + \e \int_{-1}^t f(\tau, \eta^\beta (\tau)) \dd \tau \geq 1 .
\ee

On the other hand, if $\eta^\beta (-1) \in e^{-A_h}(2 \Omega_\delta)$ (Figure~\ref{fig:g_bdy}) and $(t,x) \in \partial^- Q$ with $t > -1$ (that is, $x \in e^{t A_h} \partial \Omega_1$), then by Proposition~\ref{prop:tstar}
\begin{align}
\frac{1}{q'} \int_{-1}^t |\beta(\tau)|^{q'} \dd \tau & \geq \frac{1}{C_{q}}  (1 + t)^{- q'/q} \left| S(1 + t)^{-1} \left ( x - e^{(1+t) A_h} \eta^\beta (-1) \right ) \right|^{q'} \\
& \geq \frac{1}{C_{q}}  (1 + t)^{- q'/q} \left| S(1 + t)^{-1} e^{t A_h} \left ( e^{-t A_h} x - e^{ A_h} \eta^\beta (-1) \right ) \right|^{q'} .
\end{align}
Then, since
\begin{align}
\left|  e^{-t A_h} x - e^{ A_h} \eta^\beta (-1) \right |  & \leq \| e^{-t A_h} \| \| S(1 + t)  \| \left| S(1 + t)^{-1} e^{t A_h} \left ( e^{-t A_h} x - e^{ A_h} \eta^\beta (-1) \right ) \right|
\end{align}
and, by Lemma~\ref{lem:eta_rep} (since $-1 \leq t \leq 0$),
\be
\| S(1 + t) \| = \max \{ 1, (1+t)^\kappa \}  = 1, \qquad \|  e^{-t A_h} \| = \| e^{ - t A_0} + R_A(|t|, h) \| \leq C_{h_\ast} ,
\ee
we deduce that 
\begin{align}
 \frac{1}{q'} \int_{-1}^t |\beta(\tau)|^{q'} \dd \tau &\geq C_{q, h_\ast} (1 + t)^{- q'/q} \left|  e^{-t A_h} x - e^{ A_h} \eta^\beta (-1) \right |^{q'} \\
 &\geq C_{q, h_\ast} \rm{dist}(\partial \Omega_1, 2 \Omega_\delta)^{q'}.
\end{align}
Finally, we compute $\rm{dist}(\partial \Omega_1, 2 \Omega_\delta)$: if $\tilde x \in \partial \Omega_1$ and $\tilde y \in 2 \Omega_\delta$, then 
\be
|\tilde x|^2 =\sum_{i=0}^\kappa |P_i \tilde x|^2 = 1, \qquad \sum_{i=0}^\kappa \delta^{-2 i} |P_i \tilde y|^2 \leq 4 \delta^{2\gamma} \leq 4 \delta^{\frac{2}{q}}  \Rightarrow |\tilde y|^2 =\sum_{i=0}^\kappa |P_i \tilde y|^2 \leq 4 \delta^{\frac{2}{q}}.
\ee
Hence $|\tilde x - \tilde y| \geq 1 - 2 \delta^{1/q}$. It follows that, for all $\delta \leq 4^{-q}$, $\rm{dist}(\partial \Omega_1, 2 \Omega_\delta) \geq \frac{1}{2}$. We conclude that there exists a constant $K_{q, h_\ast}$ such that, for all such $\delta$, 
\be
 \frac{1}{q'} \int_{-1}^t |\beta(\tau)|^{q'} \dd \tau \geq K_{q, h_\ast} > 0 .
\ee
Hence
\be \label{est:u_upper_inner}
g(\eta^\beta (-1))+ \frac{1}{q' \lambda^{q'}} \int_{-1}^t |\beta(\tau)|^{q'} \dd \tau  + \e \int_{-1}^t f(\tau, \eta^\beta (\tau)) \dd \tau \geq \inf_{e^{-A_h}(2 \Omega_\delta)} g + \frac{K_{q, h_\ast}}{ \lambda^{q'}}   .
\ee

Combining \eqref{est:u_upper_outer} and \eqref{est:u_upper_inner}, we deduce that
\be
u_g^\ast \geq \min \left \{ 1, \inf_{e^{-A_h}(2 \Omega_\delta)} g + \frac{K_{q, h_\ast}}{ \lambda^{q'}} \right \} \qquad \text{on }  \partial^- Q \cap \{ t > -1\} .
\ee
Since $u \leq 1$ on $\partial^- Q$ by assumption, we deduce that $u \leq u^\ast_g$ for all $g$ satisfying
\be \label{def:g}
g(x) \geq \begin{cases}
1 & x \in (e^{-A_h}(2 \Omega_\delta))^c \\
\max \left\{u(-1,x), 1 - \frac{K_{q, h_\ast}}{ \lambda^{q'}} \right\} & x \in e^{-A_h}(2 \Omega_\delta) .
\end{cases} 
\ee

Next, we will estimate  $u^\ast_g(t,x)$ from above on the smaller cylinder $Q_\delta^{h, \gamma}$ in terms of $\inf_{e^{-A_h}(2 \Omega_\delta)} g$.
To do this, we consider a particular family of controlled trajectories $\eta^\beta$, that connect a point $(- 1, y)$ with $y \in e^{-A_h} ( 2 \Omega_\delta^\gamma )$ to a point $(t,x) \in Q^{h, \gamma}_{\delta}$ (see Figure~\ref{fig:UB}).  These trajectories will be competitors in the optimisation problem defining $u^\ast_g(t,x)$ \eqref{def:u_upper}, and therefore provide an upper bound. Moreover, we construct these trajectories so as to have controls with minimal $L^{q'}$ energy. 

\begin{figure}[h]
\centering
\includegraphics[width=0.45\textwidth]{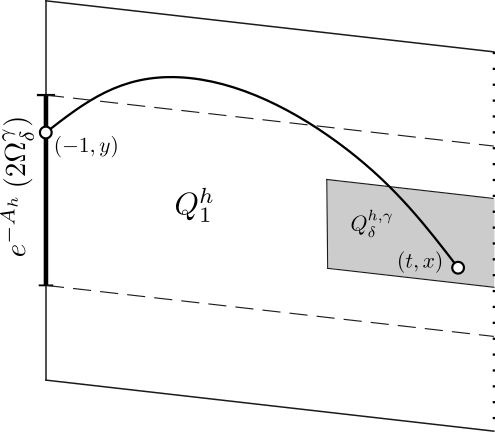}
\caption{A point $(- 1, y)$ with $y \in e^{-A_h} ( 2 \Omega_\delta^\gamma )$ can be connected to every point $(t,x) \in Q_\delta^{h,\gamma}$ (shaded region) by a controlled trajectory. If $\delta$ is sufficiently small, then trajectories with minimal control cost remain within the larger cylinder $Q^h_1$ throughout the time interval $(-1, t]$.}
\label{fig:UB}
\end{figure}

To construct the $L^{q'}$-optimal control $\beta^\ast$, recall the discussion in in Subsection~\ref{sec:OptControls}: given $y \in 2 e^{- A_h}\Omega_\delta^\gamma$ and $(t,x) \in Q^{h, \gamma}_{\delta}$, there exists a unique $\beta^\ast \in L^{q'}(- 1,   t )$ such that
\begin{align}
\frac{1}{q'} \int_{-1 }^{t} |\beta^\ast_\tau|^{q'} \dd \tau & = \min_{\beta \in L^{q'} (- 1,   t ) } \left \{ \frac{1}{q'} \int_{- 1 }^{t}  |\beta(\tau)|^{q'} \dd \tau : \eta^\beta(- 1 ) = y, \eta^\beta( t) =  x \right \} \\
& = J_{h}(- 1 ,  t ;y, e^{- t A_h} x) .
\end{align}
We will use the shorthand $\eta^\ast : = \eta^{\beta^\ast} : [-1, t] \to \R^N$ to denote the controlled trajectory satisfying
\be
\dot \eta^\ast = A_h \eta^\ast + P_0 \beta^\ast 
\quad \eta^\ast(- 1 ) = y . 
\ee

We next estimate the $L^{q'}$ norm of $\beta^\ast$ in terms of $x$ and $y$. Recall that the assumptions $(t,x) \in Q^{h, \gamma}_\delta$ and $y \in 2 e^{- A_h}\Omega_\delta^\gamma$ imply by Definitions~\ref{def:Omega} and \ref{def:Q_r} that
\be \label{eq:UB_xy_locations}
|S(\delta)^{-1} e^{- t A_h} x| < \delta^\gamma, \qquad |S(\delta)^{-1} e^{A_h} y | < 2 \delta^\gamma .
\ee
We apply Proposition~\ref{prop:tstar}: if $h (1+t) \leq h_\ast$ ( which is always true if $h \leq h_\ast$,) then 
\be
\frac{1}{q'}  \int_{-1}^{ t} |\beta^\ast_\tau|^{q'} \dd \tau \leq C_q (1 +  t)^{- q'/q} \left| S(1 +  t )^{-1}  e^{  t A_h} \left ( e^{-t A_h} x - e^{ A_h} y \right ) \right|^{q'}  .
\ee

We may write
\be
S(1 +  t )^{-1}  e^{ t A_h} \left ( e^{-t A_h} x - e^{ A_h} y \right ) = S(1 +  t )^{-1}  e^{ t A_h} S(\delta) \, S(\delta)^{-1} \left ( e^{-t A_h} x - e^{ A_h} y \right ) .
\ee
We therefore wish to estimate the operator norm of the matrix 
\be
S(1 + { t} )^{-1}  e^{  t A_h} S(\delta) = S\left( \frac{\delta}{1 +  t} \right) \,  S(\delta)^{-1} e^{ - \delta { \frac{|t|}{\delta}} A_h} S(\delta) .
\ee

Noting that $|t| \leq \delta$, we may apply Lemma~\ref{lem:eta_rep} to find that
\be \label{est:S_flow_conj}
\| S(\delta)^{-1} e^{- \delta  { \frac{|t|}{\delta}} A_h} S(\delta) \| \leq \left\| e^{\frac{|t|}{\delta}A_0} + R_{- A}\left(\frac{|t|}{\delta}, \delta h \right) \right\| \leq C_{h_\ast} .
\ee
Meanwhile, since $1- \delta \leq 1+ t \leq 1$, for all $\delta \leq \frac{1}{2}$ we have $\frac{\delta}{1 + t} \leq 1$ and hence
\be
\left \|  S \left ( \frac{\delta}{1 + t} \right )\right \| \leq 1.
\ee
We deduce that, for all $\delta \leq \frac{1}{2}$,
\be
\left \| S(1 + t )^{-1}  e^{ t A_h} S(\delta) \right \| \leq C_{h_\ast} .
\ee
It follows that
\be
 \left | S(1 +  t )^{-1}  e^{  t A_h} \left ( e^{-t A_h} x - e^{ A_h} y \right ) \right |  \leq  C_{h_\ast}\left | S(\delta)^{-1} \left ( e^{-t A_h} x - e^{ A_h} y \right ) \right | .
\ee
Hence, by \eqref{eq:UB_xy_locations},
\be \label{est:UB_x_beta_weight}
 \left | S(1 + t )^{-1}  e^{  t A_h} \left ( e^{-t A_h} x - e^{ A_h} y \right ) \right |  \leq  C_{h_\ast} \delta^\gamma .
\ee
We conclude that, for some $C_{q, h_\ast} >0$,
\be  \label{est:betaLq_opt}
\| \beta^\ast \|_{L^{q'}} \leq C_{q, h_\ast} \delta^\gamma  .
\ee

In order to obtain estimates on $u^\ast_g$ that depend on $\| f \|_{L^p(Q^h_1)}$, we need to ensure that $\eta^\ast \rvert_{(-1, t]}$ is contained in $Q^h_1$. We check that we can ensure this by taking $\delta$ sufficiently small.
We must show that, for all $\tau \in (-1, t)$, $|e^{-\tau A_h} \eta^\ast (\tau )| < 1$.
To do this, we apply Lemma~\ref{lem:extent}
to find that
\be 
 | S(1 + t)^{-1}  e^{  t A_h} \left ( e^{-t A_h} x - e^{-\tau A_h} \eta^\ast(\tau)  \right ) | \leq C_q (1 + t)^{1/q} \| \beta^\ast \|_{L^{q'}{(-1, - \delta)}} \leq  C_{q, h_\ast} \delta^\gamma ,
\ee
where in the final inequality we have used that $1 +t \leq 1$ as well as the $L^{q'}$-estimate on $\beta^\ast$ \eqref{est:betaLq_opt}.

Therefore, using \eqref{est:UB_x_beta_weight} in the case $y=0$ and the triangle inequality,
\be 
 | S(1 +  t)^{-1}  e^{ t A_h} e^{-\tau A_h} \eta^\ast(\tau)   | \leq C_q \| \beta^\ast \|_{L^{q'}} + C_{h_\ast} \delta^\gamma  \leq  C_{q, h_\ast} \delta^\gamma 
\ee
 (for a different constant $C_{q, h_\ast}$).

 It then remains only to estimate the matrix norm $\| e^{  - t A_h}  S(1 +  t)  \|$. For $\delta \leq \frac{1}{2}$ and $1-\delta \le 1+t\le 1$, we have $\frac{|t|}{1+t}\leq 1$. Then, by Lemma~\ref{lem:eta_rep},
\be
e^{  - t A_h}  S(1 + t) = S(1 + t) S(1 + t)^{-1} e^{  - t A_h}  S(1 + t) = S(1 + t) \left ( e^{\frac{|t|}{1+t} A_0} + R_A\left(\frac{|t|}{1+t}, h(1+t)\right)\right ).
\ee
Hence, since $1 + t \leq 1$, for $h \leq h_\ast$ then
\be
\| e^{  - t A_h}  S(1 +  t)  \| \leq C_{h_\ast} \| S(1 + t) \| \leq C_{h_\ast} .
\ee
Altogether, we have shown that there exists a constant $C_{q, h_\ast}'  > 0$ such that 
\be
| e^{-\tau A_h} \eta^\ast(\tau) | \leq C_{q, h_\ast}' \delta^\gamma \leq C_{q, h_\ast}' \delta^{1/q}
\ee
where we have 
used the assumptions that $\gamma \geq \frac{1}{q}$ and $\delta \leq 1$.
If $\delta < \frac{1}{(2 C_{q, h_\ast}')^q}$, then
\be \label{eq:opt_eta_contained}
| e^{-\tau A_h} \eta^\ast(\tau) | \leq \frac{1}{2}.
\ee
Hence $\eta^\ast$ is contained within $Q_1^h$.

When $f \in L^\infty$, the single path $\eta^\ast$ will suffice to bound $u^\ast_g$. However, when $f \in L^p$ for $p < + \infty$, we need an estimate for the term $\int_{-1}^t f(\tau, \eta^\beta (\tau)) \dd \tau$ that depends on the $L^p$ norm of $f$.
To obtain such a bound, we use a technique inspired by \cite{CardaliaguetGraber}: 
we construct a parametrised family of paths of the control system connecting $(-1,y)$ and $(t,x)$. By averaging over this family, we create a $(N+1)$-dimensional set over which to  integrate $f$, enabling the use of $L^p$ estimates (Figure~\ref{fig:LpCone}). In order for the argument to be valid for the widest possible range of $p$, we want to use \emph{curved} paths, to ensure that the Jacobian of the transformation is integrable near the endpoints $(-1,y)$ and $(t,x)$. In \cite{CardaliaguetGraber} the control system was $\dot \eta = \beta$, and so it was possible to write down the desired paths directly. For control systems of the form \eqref{intro:control_sys}, this is less straightforward. We will use the paths constructed in Proposition~\ref{prop:flows} (using the techniques of \cite{ADGLMR}) to define a suitable family of perturbations of $\eta^\ast$.

\begin{figure}[h]
\centering
\includegraphics[width=0.6\textwidth]{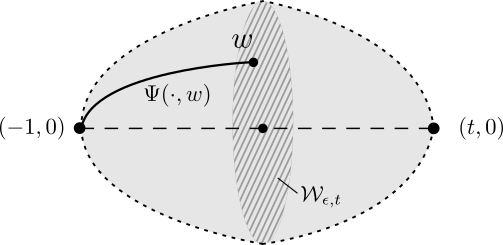}
\caption{The constant zero path from $(-1,0)$ to $(t,0)$ is perturbed to pass through $w \in \mc{W}_{\e,t}$ at the midpoint $\frac{1}{2}(t-1)$. The union of all such trajectories $\Psi(\cdot, w)$ creates a set of positive measure in $\R \times \R^N$ [region enclosed by dotted lines].}
\label{fig:LpCone}
\end{figure}

The perturbed paths are parametrised by $w \in \R^N$:  for each $w$, we add to $\eta^\ast$ a path $\Psi(\cdot, w) : [-1, t] \to \R^N$ that connects $(-1, 0)$ to the midpoint $(\frac{t-1}{2}, w)$, and then connects $(\frac{t-1}{2}, w)$ to $(t,0)$.
To construct a suitable family of paths, we apply Proposition~\ref{prop:flows}: 

First, fix $(\alpha_i)_{i=0}^\kappa \in (0, 1)$ pairwise distinct such that for all $i = 0, \ldots, \kappa$,
\be \label{def:alpha_range}
\frac{1}{q} < \alpha_i < \frac{1}{N} \left ( p - 1 - \sum_{j=0} j n_j \right ) .
\ee
This is always possible, since  $p >  N/q + 1 + \sum_{i=1}^\kappa i n_i $.
Then there exists $B_i \left ( \frac{1+t}{2} \right )$ such that the solution of
\be
\partial_\tau\Phi(\tau, w) = A_h \Phi(\tau, w) + \sum_{i=0}^\kappa (\tau + 1)^{\alpha_i - 1} B_i \left ( \frac{1 + t}{2} \right ) w, \qquad \tau \in \left[-1, \frac{t-1}{2} \right], \; \Phi(-1,w) = 0
\ee
satisfies $\Phi (\frac{t-1}{2}, w) = w$ and $\Phi(\tau,\cdot)$ defines a diffeomorphism $\R^N \to \R^N$ for $-1 < \tau \leq \frac{t-1}{2}$. Similarly there exists $\check{B}_i \left ( \frac{1+t}{2} \right )$ such that the solution of 
\be
\partial_\tau \check \Phi(\tau, w) = A_h \check \Phi(\tau, w) + \sum_{i=0}^\kappa (t - \tau)^{\alpha_i - 1} \check B_i \left ( \frac{1+t}{2} \right ) w, \qquad \tau \in \left[\frac{t-1}{2}, t \right], \; \Phi(t ,w) = 0
\ee
satisfies $\check \Phi (\frac{t-1}{2}, w) = w$ and $\check\Phi(\tau,\cdot)$ defines a diffeomorphism $\R^N \to \R^N$ for $\frac{t-1}{2} \leq \tau < t$.

We concatenate these two flows to define the controls
\be
\beta^w_\tau = 
\begin{cases}
\sum_{i=0}^\kappa (\tau + 1)^{\alpha_i - 1} B_i \left ( \frac{t + 1}{2} \right ) w  \qquad \tau \in (-1, \frac{t-1}{2} ) \\
\sum_{i=0}^\kappa (t - \tau)^{\alpha_i - 1} \check{B}_i \left ( \frac{t + 1}{2} \right ) w  \qquad \tau \in (\frac{t - 1}{2}, t ).
\end{cases}
\ee
and the flow $\Psi : (-1,t) \times \R^N \to \R^N$
\be
\Psi(\tau , w) : = \begin{cases}
\Phi(\tau, w) & \tau \in (-1, \frac{t-1}{2} ) \\
\check \Phi(\tau, w) & \tau \in (\frac{t-1}{2}, t ) .
\end{cases}
\ee
We note that, by Proposition~\ref{prop:flows},
\be\label{bound_2}
\int_{-1}^t |\beta_\tau^w |^{q'} \dd \tau = \int_{-1}^{\frac{t-1}{2}} |\beta_\tau^w |^{q'} \dd \tau +  \int_{\frac{t-1}{2}}^t |\beta_\tau^w |^{q'} \dd \tau \leq C (1+t)^{- q' / q}  |S(1+ t)^{-1} w|^{q'} .
\ee

We now define a perturbed path
\be
\eta^w (\tau) := \eta^\ast(\tau) + \Psi(\tau, w),  \qquad  \tau \in [-1, t].
\ee
Observe that $\eta^w(-1) = y$, $\eta^w(t) = x$, and $\partial_t \eta^w = A_h \eta^w + P_0 (\beta^\ast + \beta^w)$.
Hence, if $\eta^w \vert_{(-1, t])} \subset Q^r_1$, then $\eta^w$ 
is admissible as a competitor for the infimum defining $u^\ast$ (Equation \eqref{def:u_upper}).
This condition can be satisfied by taking $|w|$ sufficiently small:

Namely, consider $w \in \mc{W}_{\e, t} : = \e^{a p/N} (1 + t)^b S(1 + t) \Omega_1$ for some exponents $a \in (0,1)$ and $b > 0$ to be determined. Then, by \eqref{bound_2},
\be
\| \beta^w \|_{L^{q'}} \leq C \e^{a p/N} (1 + t)^{b - 1/q} \qquad \forall w \in\mc{W}_{\e, t} .
\ee
Hence, using Lemma~\ref{lem:extent}, since $\Psi(-1, w) = 0$ we deduce that
	\begin{align}
	(1 + t)^{-1/q} | S(1 + t)^{-1} e^{(t-\tau) A_h} \Psi (\tau, w)  | & \leq C_q  \| \beta^w \|_{L^{q'}} \\
	& \leq C_q  \e^{a p/N} (1 + t)^{b - 1/q}
	\end{align}
Then, since $0 < 1 + t < 1$,
\be
| e^{-\tau A_h} \Psi (\tau, w)  | \leq \| e^{t A_h} \| | S(1 + t)^{-1} e^{(t-\tau) A_h} \eta(\tau)  | \leq C_q  \e^{a p/N} (1 + t)^{b} \leq C_q  \e^{a p/N} .
\ee
Thus, if $\e$ is chosen small enough that $ C_q  \e^{a p/N} < \frac{1}{2}$
by \eqref{eq:opt_eta_contained} we have $\eta^w \vert_{(-1,t]} \subset Q^r_1$ for all $w \in \mc{W}_{\e,t}$.

We may therefore use the family of paths $\eta^w$ to obtain an upper bound for $u^\ast_g$:
\begin{align} \label{est:betaLq_w}
u^\ast_g & \leq \inf_{ y \in 2 e^{A_h} \Omega_\delta} \inf_{w \in \mc{W}_{\e,t}} \left \{ g(y) + \frac{\lambda^{-q'}}{q'} \int_{-1}^t |\beta^\ast_\tau + \beta^w_\tau |^{q'} \dd \tau  + \e \int_{-1}^t f(\tau, \eta^w(\tau) ) \dd \tau \right \} \\
& \leq \inf_{ y \in 2 e^{A_h} \Omega_\delta} \left \{ g(y) + \frac{\lambda^{-q'}}{q' |\mc{W}_{\e,t}|} \int_{\mc{W}_{\e,t}} \int_{-1}^t |\beta^\ast_\tau + \beta^w_\tau |^{q'} \dd \tau \dd w  + \frac{\e}{|\mc{W}_{\e,t}|} \int_{\mc{W}_{\e,t}} \int_{-1}^t f(\tau, \eta^w(\tau) ) \dd \tau \dd w \right \}.
\end{align}

By \eqref{est:betaLq_opt} and \eqref{bound_2},
\be
\| \beta^\ast + \beta^w \|_{L^{q'}} \leq C_{q, h_\ast} (\delta + (1+t)^{-1 / q}  |S(1+ t)^{-1} w| ),
\ee
and hence
\begin{multline}
u^\ast_g(t,x) \leq \inf_{ y \in 2 e^{A_h} \Omega_\delta} \left \{ g(y) + \frac{\e}{|\mc{W}_{\e,t}|} \int_{\mc{W}_{\e,t}} \int_{-1}^t f(\tau, \eta^w(\tau) ) \dd \tau \dd w \right \}\\
+ C_{q, h_\ast} \lambda^{-q'} \delta^{q'} +C_{q, h_\ast}  \frac{\lambda^{-q'}}{ |\mc{W}_{\e,t}|} \int_{\mc{W}_{\e,t}} (1+t)^{-q' / q}  |S(1+ t)^{-1} w|^{q'}  \dd w  .
\end{multline}

Since $\mc{W}_{\e, t} = \e^{a p/N} (1 + t)^b S(1 + t) \Omega_1$, using changes of variable we obtain
\begin{align}
 \frac{1}{ |\mc{W}_{\e,t}|} \int_{\mc{W}_{\e,t}} (1+t)^{-q' / q}  |S(1+ t)^{-1} w|^{q'}  \dd w & = (1+t)^{-q' / q} \frac{1}{ |\Omega_1|} \int_{\Omega_1} (\e^{a p/N} (1 + t)^b)^{q'}  |w|^{q'}  \dd w \\
 & = (1+t)^{q' ( b - 1/ q)} \e^{a p q' /N} . \label{est:curved_control_cost_scaled}
\end{align}

To estimate the term involving $f$, we use a change of variable $w' = \Psi(\tau,w)$ to write
\be
\frac{\e}{|\mc{W}_{\e,t} |} \int_{-1}^t \int_{\mc{W}_{\e,t}  }  f(\tau, \eta^{\ast}(\tau) + \Psi(\tau, w) ) \dd w \dd \tau  = \frac{\e}{|\mc{W}_{\e,t} |} \int_{-1}^t \int_{\Psi(\tau, \mc{W}_{\e,t} ) }  f(\tau, \eta^{\ast}(\tau) + w')  |\det \nabla_{w'} \Psi^{-1}(\tau, w') |  \dd w' \dd \tau .
\ee
Recalling that $\| f \|_{L^p(Q^h_1)} = 1$ by assumption,
we apply a space-time H\"older inequality to obtain
\be
\frac{\e}{|\mc{W}_{\e,t} |} \int_{-1}^t \int_{\mc{W}_{\e,t}  }  f(\tau, \eta^{\ast}(\tau) + \Psi(\tau, w) ) \dd w \dd \tau  \leq \frac{\e}{|\mc{W}_{\e,t} |}  \left ( \int_{-1}^t \int_{\Psi(\tau, \mc{W}_{\e,t})}  |\det \nabla_w \Psi^{-1}(\tau, w) |^{p'} \dd w \dd \tau \right )^{1/p'} . 
\ee
Since $\nabla_w \Psi^{-1}(\tau, w)$ is independent of $w$ ($\Psi(\tau, \cdot)$ being linear),
\begin{align}
 \frac{\e}{|\mc{W}_{\e,t} |}  &\left ( \int_{-1}^t \int_{\Psi(\tau, \mc{W}_{\e,t})}  |\det \nabla_w \Psi^{-1}(\tau) |^{p'} \dd w \dd \tau \right )^{1/p'}\\
  & \leq \frac{\e}{|\mc{W}_{\e,t} |}  \left ( \int_{-1}^t |\det \nabla_w \Psi^{-1}(\tau) |^{p'-1} \int_{\Psi(\tau, \mc{W}_{\e,t})} |\det \nabla_w \Psi^{-1}(\tau) | \dd w \dd \tau \right )^{1/p'} \\
 & \leq \frac{\e}{|\mc{W}_{\e,t} |^{1/p}}  \left ( \int_{-1}^t |\det \nabla_w \Psi^{-1}(\tau) |^{\frac{1}{p-1}} \dd \tau \right )^{1/p'}
\end{align}

By Proposition~\ref{prop:flows},
\be
 |\det \nabla_w \Psi^{-1}(\tau) | \leq \begin{cases}
 C \left ( \frac{1 + t}{1 + \tau}  \right )^{N \alpha^\ast + \sum_{j=1}^\kappa j n_j} & \tau \in \left(-1, \frac{t-1}{2} \right] \\
 C \left ( \frac{1 + t}{t - \tau}  \right )^{N \alpha^\ast + \sum_{j=1}^\kappa j n_j} & \tau \in \left[ \frac{t-1}{2} , t\right) .
 \end{cases}
\ee
Hence $|\det \nabla_w \Psi^{-1}(\tau) |^{\frac{1}{p-1}}$ is integrable when $\frac{N  \alpha^\ast + \sum_{j=0}^\kappa j n_j }{p-1} < 1$, 
or in other words when
$p > 1 + N \alpha^\ast + \sum_{j=0}^\kappa j n_j$,
which is true by \eqref{def:alpha_range}.
In this case,
\begin{align}
 \int_{-1}^t  |\det \nabla_w \Psi^{-1}(\tau) |^{\frac{1}{p-1}} \dd \tau & \leq C \left (\int_{-1}^{\frac{ t-1}{2}}  \left ( \frac{1+t}{1+\tau }  \right )^{N \alpha^\ast + \sum_{j=1}^\kappa j n_j}  \dd \tau + \int_{\frac{ t-1}{2}}^t  \left ( \frac{1+t}{t - \tau}  \right )^{N \alpha^\ast + \sum_{j=1}^\kappa j n_j}  \dd \tau \right ) \\
 & \leq C_{\alpha^\ast} (1+t) .
\end{align}
We deduce that
\be\label{est:fLp}
\frac{\e}{|\mc{W} |} \int_s^t \int_{\mc{W}  }  f(\tau, \eta^{\ast}(\tau) + \Psi(\tau, w) ) \dd w \dd \tau \leq C \e \, |\mc{W}_{\e, t}|^{-1/p} (1+t)^{1/p'} .
\ee

Next, we compute
\be
|\mc{W}_{\e,t}| = |\e^{ap/N} (1+t)^b S(1+t) \Omega_1| =  C_N \e^{a p} (1+t)^{Nb + \sum_{i=0}^\kappa i n_i}.
\ee
Hence \eqref{est:fLp} implies that
\be\label{est:fLp_scaled}
\frac{\e}{|\mc{W}_{\e,t} |} \int_{-1}^t \int_{\mc{W}  }  f(\tau, \eta^{\ast}(\tau) + \Psi(\tau, w) ) \dd w \dd \tau \leq C \e^{1 - a} \, (1+t)^{1 - (1 + Nb + \sum_{i=0}^\kappa i n_i)/p} .
\ee

Now choose the exponents $a, b$ to balance the contributions of the two terms \eqref{est:curved_control_cost_scaled} and \eqref{est:fLp_scaled}, i.e.
\be
a := \frac{N}{N + p q'} \in (0,1), \qquad b = \frac{1}{q} + \frac{p - (1 + \frac{N}{q} + \sum_{i=0}^\kappa i n_i)}{N + p q'} > \frac{1}{q}
\ee
so that 
\begin{align}
& 1 - a = \frac{a p q'}{N} = \frac{p q'}{N + p q'} = : \mu > 0, \\
& q' ( b - 1/ q) = 1 - (1 + Nb + \sum_{i=0}^\kappa i n_i)/p =  a ( p - (1 + \frac{N}{q} + \sum_{i=0}^\kappa i n_i) )  = : \nu > 0 .
\end{align}
Thus, for all sufficiently small $\delta > 0$ and $\e > 0$, we obtain the estimate
\begin{align}
u^\ast_g(t,x) & \leq \inf_{y \in 2 e^{-A_h} \Omega_\delta^\gamma }  g(y) + C_{q, h_\ast} \lambda^{-q'} \delta^{q'/q} + C \e^{\mu} (1+ t)^{ \nu } \\
& \leq \inf_{y \in 2 e^{-A_h} \Omega_\delta^\gamma } g(y) + C_{q, h_\ast} \lambda^{-q'} \delta^{q'/q} + C \e^{\mu} ,
\end{align}
where the last inequality holds since $0 < 1 + t \leq 1$ and $\nu > 0$.

\noindent{\bf Lower Bound.} 
$u$ is a viscosity supersolution of \eqref{eq:HJ_ImpOfOsc_sup}. By the comparison principle for viscosity sub- and supersolutions (Lemma~\ref{lem:comparison}), if $u_\ast$ is a viscosity subsolution of \eqref{eq:HJ_ImpOfOsc_sup}
such that $u \geq u_\ast$ on $\partial_- Q^h_{1}$, then $u \geq u_\ast$ in $Q_{1}^h$.

By Lemma~\ref{lem:local_value}, for any $\ell \in BUC(\R^N)$ the function $u_\ast^\ell$ defined by
\be
u_\ast^{\ell}(t,x) : = \inf_{\substack{\beta \in L^{q'}((-1,t); \R^N)
}} \left \{ \ell(\eta^\beta(-1 ; t,x)) + \frac{1}{q' \Lambda^{q'}} \int_{-1}^t |\beta(\tau)|^{q'} \dd \tau  - \e (1 + t) \; \right \}
\ee
is a viscosity subsolution of \eqref{eq:HJ_ImpOfOsc_sup}.

We consider $\ell \in BUC(\R^N)$ satisfying 
\be \label{def:l}
\begin{cases}
0 \leq \ell(x) \leq u(-1,x) & x \in e^{- A_h} \Omega_1 \\
\ell (x) = 0 & x \not \in e^{- A_h} \Omega_1 .
\end{cases} 
\ee
Then, by considering the zero control $\beta \equiv 0$, we obtain that for all $(t,x) \in \partial^- Q$ with $t > -1$ -- i.e. all points of the form $(t, e^{t A_h} \tilde x)$ for $\tilde x \in \partial \Omega_1$.
\be
 u_\ast^{\ell}(t,x) \leq \ell(e^{-(t + 1) A_h} x) - \e (1 + t) \leq \ell(e^{-(t + 1) A_h} x) = \ell(e^{- A_h} \tilde x) = 0  \leq u(t,x).
\ee
Hence, $u_\ast^\ell \leq u$ on $\partial^- Q$.

Thus by Lemma~\ref{lem:comparison} $u \geq u_\ast^l$ on $Q_{1}^h$, that is,  
\begin{align}
u (t,x) & \geq \inf_{\substack{\beta \in L^{q'}((-1,t); \R^N)
}} \left \{ \ell(\eta^\beta(-1 ; t,x)) + \frac{1}{q' \Lambda^{q'}} \int_{-1}^t |\beta(\tau)|^{q'} \dd \tau  - \e (1 + t) \; \right \}
\end{align}

We now wish to estimate $u_\ast^\ell$ from below on the small cylinder $Q^\gamma_\delta$. 
There are two possible cases, depending on where minimising trajectories hit the $\{t= -1\}$ boundary.

The first possibility (Figure~\ref{fig:LB_time}) is that the infimum can be approached by paths that hit $\partial_- Q^h_{1}$ at $(-1, y)$, where $e^{A_h} y \in 2 \Omega_\delta^\gamma$, i.e.
\be
u_\ast^\ell(t,x) = \inf_{y \in 2 e^{- A_h} \Omega_{\delta}^\gamma } \left \{ \ell(y) + \Lambda^{-q'} J_h(-1 ,t;y,x) - \e(1 + t) \right \} .
\ee
In this case, since $ J_{r} \geq 0$ and $1 + t \leq 1$ we estimate from below by
\be
u(t,x) \geq \inf_{y \in 2 e^{- A_h} \Omega_{\delta} } \ell(y) - \e .
\ee

\begin{figure}[h] 
\centering
\includegraphics[width=0.5\textwidth]{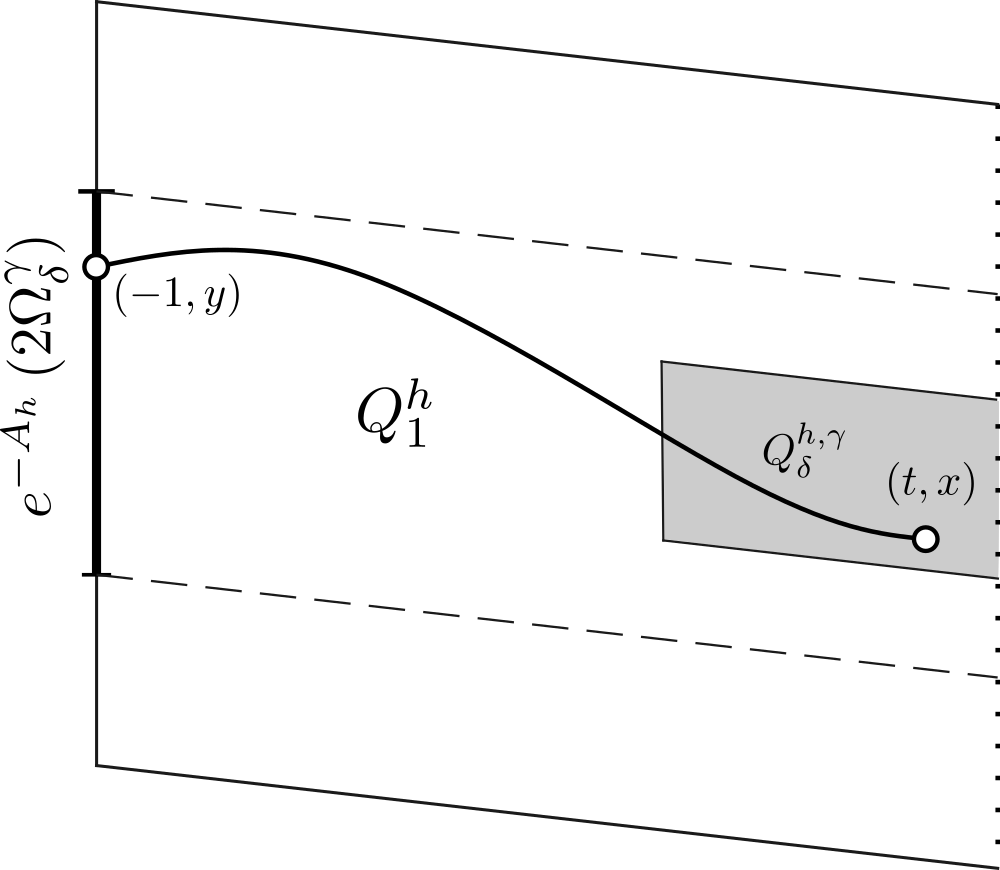}
\caption{ Optimising paths hit the boundary at points $(-1, y)$ with $y \in 2 e^{- A_h} \Omega^\gamma_\delta$ (thick line). In this case an improved lower bound is obtained if $\ell$ can be chosen to be sufficiently large everywhere on this part of the boundary while keeping $\sup_{\partial^- Q^h_1} (\ell - u) \leq 0$. This is possible provided that $\inf_{2 e^{- A_h} \Omega^\gamma_\delta} u$ is sufficiently large. If there is a point $y \in 2 e^{- A_h} \Omega^\gamma_\delta$ where this fails (i.e. $u(-1, y)$ is small), then we will instead be able to improve the upper bound (Figure~\ref{fig:UB}).}
\label{fig:LB_time}
\end{figure}

Otherwise (Figure~\ref{fig:LB_space}), we use the bound $l \geq 0$ to obtain
\be 
u_\ast^\ell(t,x) \geq \Lambda^{- q'} \inf_{y \not \in  2 e^{- A_h} \Omega_{\delta} }  J_h(-1 ,t;y,x) - \e(1 + t) .
\ee
By Proposition~\ref{prop:tstar},
\be
J_h(-1, t; y, x) \geq \frac{1}{C_{q}}  |1+t|^{- q' /q} \left | S(1+t)^{-1} \left ( x - e^{(1+t) A_h} y \right ) \right |^{q'} .
\ee

\begin{figure}[h] 
\centering
\includegraphics[width=0.5\textwidth]{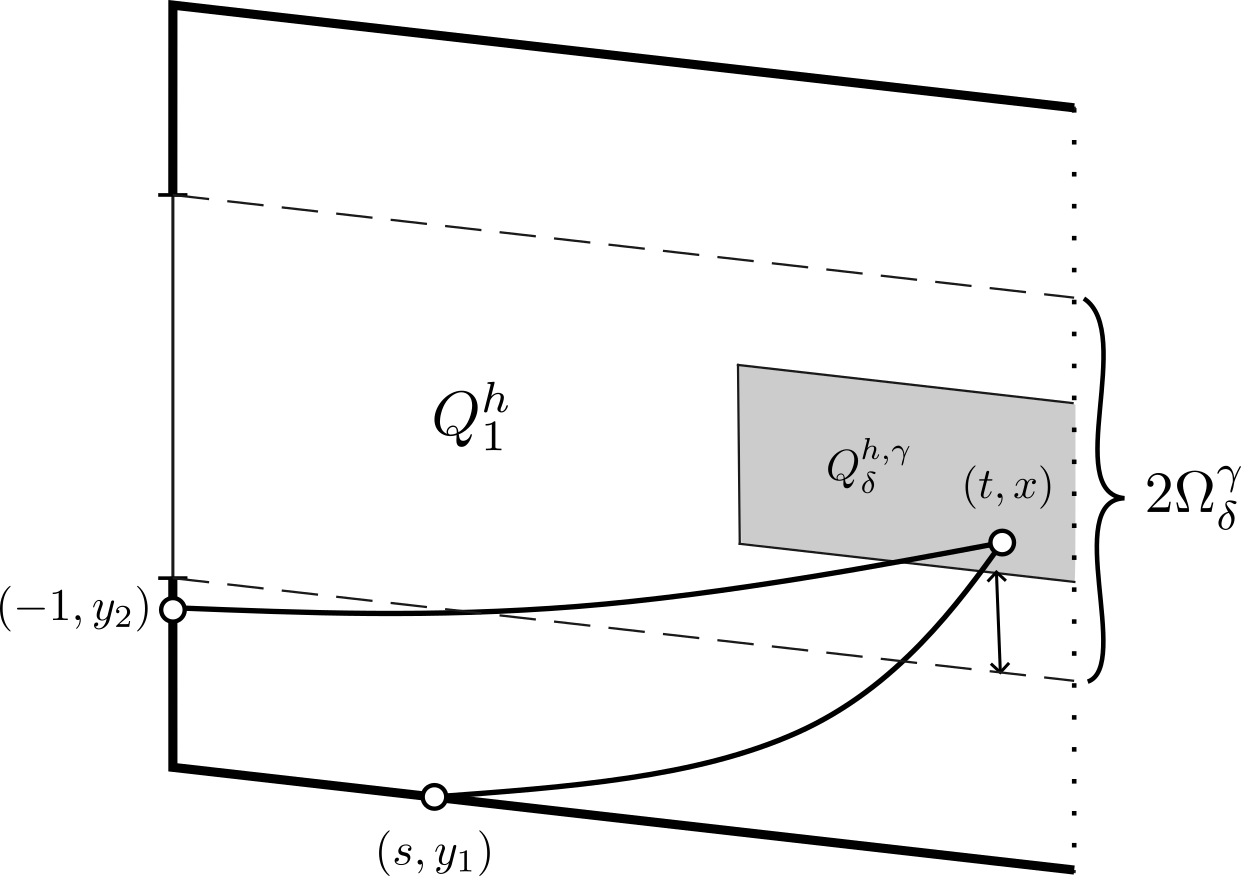}
\caption{
We consider optimising paths that either reach the spatial boundary of $Q^h_1$ at some point $y_1 \in e^{s A_h} \partial \Omega_1$, $s \in (0,t)$, or remain within $Q^h_1$ over the time interval $(-1, t)$ and hit the \emph{outer} part of the time boundary at some point $(-1,y_2)$ where $y_2 \in e^{- A_h}(\Omega_1 \setminus 2 \Omega^\gamma_\delta)$ (thick outer line). In this case the control energy is bounded below, since paths must cross from outside of $2 e^{s A_h} \Omega_\delta^\gamma$ ($s \in [-1,0]$) (dashed lines) into $Q^{h,\gamma}_\delta$ (shaded region) over a time interval of length at most 1.}
\label{fig:LB_space}
\end{figure}

There exist $\tilde y \in \Omega_1 \setminus 2 \Omega_\delta^\gamma$ and $\tilde x \in \Omega_{\delta}^\gamma$ such that $y = e^{s A_h} \tilde y$ and $x = e^{t A_h} \tilde x$.
Thus
\be \label{est:J_lower_tspt}
J_h(-1, t; y, x) \geq \frac{1}{C_{q, h_\ast}}  |1+t|^{- q' /q} \left | S(1+t)^{-1} e^{tA_h} \left ( \tilde x - \tilde y \right ) \right |^{q'} .
\ee 
Moreover, by the reverse triangle inequality
\be
| S(\delta)^{-1} (\tilde x - \tilde y) | \geq | S(\delta)^{-1}  \tilde y |  - | S(\delta)^{-1} \tilde x | \geq 2 \delta^\gamma - \delta^\gamma \geq \delta^\gamma \geq \delta,
\ee
where in the last inequality we have used $\gamma \leq 1$, $\delta \leq 1$.

It remains to estimate 
\begin{align}
\| S(\delta)^{-1} e^{- t A_h} S(1+t) \| & \leq \| S(\delta)^{-1} \| \| e^{- t A_h} \| \| S(1+t) \| \\
& \leq \delta^{- \kappa } e^{\| A_h \|} \leq C \delta^{- \kappa } ,
\end{align}
where we have used the estimate \eqref{eq:Sr_norm} for the norm of $S$, the assumption $\delta \leq 1$, and the fact that $t \in [-1, 0]$ implies that $|t|, |1+t| \leq 1$.

Hence
\be
J_h(-1, t; y, x) \geq \frac{1}{C_{q,  h_\ast}} \delta^{q'  (1 + \kappa)} .
\ee

We deduce that
\be
u(t,x) \geq \frac{1}{C_{q, h_\ast}} \Lambda^{-q'} \delta^{q'  (1 + \kappa)} - \e .
\ee

Combining the two cases, we have found that
\be
u(t,x) \geq \min \left \{\inf_{y \in 2 e^{- A_h} \Omega_{\delta}^\gamma } \ell(y) , \frac{1}{C_{q, h_\ast}} \Lambda^{-q'} \delta^{q'  (1 + \kappa )}\right \} - \e
\ee

\noindent {\bf Improvement of Oscillation.}
To summarise, so far we have obtained the following lower and upper bounds for all $(t,x) \in Q_{\delta}^{h, \gamma}$, where $\delta$ is sufficiently small: for all $g \in BUC(\R^N)$ satisfying \eqref{def:g} and $\ell\in BUC(\R^N)$ satisfying \eqref{def:l} 
\be \label{u-bounds:UL_lg}
\min \left \{\inf_{y \in 2 e^{- A_h} \Omega_{\delta}^\gamma } \ell(y) , \frac{1}{C_{q, h_\ast}} \Lambda^{-q'} \delta^{q'  (1 + \kappa )}\right \} - \e
\leq u(t,x) \leq \inf_{y \in 2 e^{- A_h} \Omega_{\delta}^\gamma }  g(y)  + C_{q, h_\ast} \lambda^{-q'} \delta^{q'/q} + C \e^\mu  .
\ee
 
We take the supremum over $\ell$ and infimum over $g$ to obtain
\begin{align} \label{u-bounds:UL}
\min \left \{\inf_{y \in 2 e^{- A_h} \Omega_{\delta}^\gamma } u(-1,y) , \frac{1}{C_{q, h_\ast}} \Lambda^{-q'} \delta^{q'  (1 + \kappa )}\right \} - \e
&\leq u(t,x)\\
& \leq \max \left\{ \inf_{y \in 2 e^{- A_h} \Omega_{\delta}^\gamma }  u(-1,y) , 1 - \frac{K_{q, h_\ast}}{\lambda^{q'}}  \right\} + C_{q, h_\ast} \lambda^{-q'} \delta^{q'/q} + C \e^\mu\nonumber  ,
\end{align}
where $K_{q, h_\ast} > 0$ is a constant.

We now wish to choose the parameters $\delta$, $\theta$ and $\e_\ast$ to satisfy the following:
\begin{align} \label{est:lambda}
C_{q, h_\ast} \lambda^{-q'} \delta^{q'/q} & \leq  \min \left\{ 1 - 2 \theta,  \frac{K_{q, h_\ast}}{\lambda^{q'}} \right\}  - 2 \theta \\ \label{est:Lambda}
 \frac{1}{C_{q, h_\ast}} \Lambda^{-q'} \delta^{q'  (1 + \kappa )} &\geq 2 \theta \\
\e_\ast  \leq \theta , \; C \e^\mu_\ast & \leq \theta . \label{est:eps}
\end{align}
This is possible if we first choose $\delta > 0$ small enough that the estimates \eqref{u-bounds:UL} hold and 
\be 
C_{q, h_\ast} \delta^{q'/q} < K_{q, h_\ast} .
\ee
Then fix $\theta > 0$ small enough that, for the given value of $\delta$,
\be
0 < \theta <  \frac{1}{2} \min \left \{  \frac{K_{q, h_\ast}}{\lambda^{q'}}  - C_{q, h_\ast} \lambda^{-q'} \delta^{q'/q}, \frac{1}{2}(1 - C_{q, h_\ast} \lambda^{-q'} \delta^{q'/q}), \frac{1}{ C_{q, h_\ast}} \Lambda^{-q'} \delta^{q'  (1 + \kappa)}  \right \} ;
\ee
this is possible since the right hand side is strictly positive. This ensures that \eqref{est:lambda}-\eqref{est:Lambda} are satisfied. Finally, take $\e_\ast > 0$ small enough that the estimates \eqref{u-bounds:UL} hold and \eqref{est:eps} is satisfied for the given value of $\theta$.

Then, for the chosen $\theta, \delta$ and any $\e \leq \e_\ast$, from \eqref{u-bounds:UL} and \eqref{est:lambda}-\eqref{est:Lambda}-\eqref{est:eps} we obtain 
\be \label{u-bounds:UL_params}
\min \left \{\inf_{y \in 2 e^{- A_h} \Omega_{\delta} } u(-1 ,y) , 2 \theta \right \} - \theta
\leq u(t,x) \leq \max \left\{ \inf_{y \in 2 e^{- A_h} \Omega_{\delta}^\gamma }  u(-1,y) ,  1- \frac{K_{q, h_\ast}}{\lambda^{q'}}   \right\}   -  \max \left\{ 2 \theta, 1- \frac{K_{q, h_\ast}}{\lambda^{q'}} \right\}  + 1-  \theta.
\ee

Then either:
\begin{enumerate}[(i)]
\item $\ds \inf_{y \in 2 e^{- A_h} \Omega_{\delta}^\gamma } u(-1 ,y) \leq 2 \theta$, in which case the upper bound of \eqref{u-bounds:UL_params} gives $\sup_{Q^{h, \gamma}_\delta} u \leq 1-\theta$, or
\item $\ds \inf_{y \in 2 e^{- A_h} \Omega_{\delta}^\gamma } u(-1 ,y) > 2 \theta$, in which case the lower bound of \eqref{u-bounds:UL_params} gives $\inf_{Q^{h, \gamma}_\delta} u \geq \theta$.
\end{enumerate}  
Since by assumption $0 \leq u \leq 1$ on $Q^h_1 \supset Q^{h, \gamma}_\delta$, in either case we deduce that
\be
\osc_{Q^{h, \gamma}_\delta} u \leq 1- \theta.
\ee

\end{proof}

\subsubsection{H\"{o}lder Regularity} \label{sec:Iter}

In this section, we complete the proofs of Proposition~\ref{prop:small_Holder} and Corollary~\ref{cor:HolderSmall}. The key step is to use the improvement of oscillation to prove H\"older regularity. The proof is based on a standard iteration argument (see e.g. \cite{Cardaliaguet-Silvestre}); however, in our present setting, we must take care to pay attention to the effect of the rescaling $\widetilde D_{\rho}$ on the drift matrix $A_h$.

\begin{lemma} \label{lem:OscIter}
Let $p, \theta, \delta, \e_\ast$ and $h_\ast$ be as in Proposition~\ref{prop:ImproveOsc}. Define 
\be \label{def:alpha_Holder}
\alpha : = \min \left \{ \frac{\log(1-\theta)}{\log \delta}, \frac{1 -  \frac{1}{p} (\frac{N}{ q} + 1 + \sum_{j=1}^\kappa j n_j )}{1 + \frac{N}{ p q '} } \right \}, \quad \gamma : = \frac{1}{q} + \frac{\alpha}{q'} .
\ee

Let $Q^{h, \gamma}_\rho$ be defined as in Definition \ref{def:Q_r} for this value of $\gamma$ and any $0\leq h \leq h_\ast$. 

Let $u \in C(\ov{Q}^h_1)$ be a viscosity supersolution of the Hamilton--Jacobi equation
\be
\label{eq:HJ_HR_sup}
\partial_t u + \left \langle A_h x ,  \nabla_x u \right \rangle + \frac{\Lambda^q}{q} | P_0 \nabla_x u |^q + \e = 0 \qquad \text{in} \; Q^h_1
\ee
and a viscosity subsolution of the Hamilton--Jacobi equation
\be
\label{eq:HJ_HR_sub}
\partial_t u + \left \langle A_h x , \nabla_x u \right \rangle + \frac{\lambda^q}{q} |P_0 \nabla_x u |^q -  \e f = 0 \qquad \text{in} \; Q^h_1 ,
\ee
for $0 \leq h \leq h_\ast$, $\e \leq \e_\ast$ and some non-negative continuous function $f \geq 0$ satisfying $\| f \|_{L^p(Q^h_1)} = 1$.

Suppose that $\osc_{Q^h_1} u \leq 1$. Then, for all $\rho \in (0,1)$,
\be
\osc_{Q^{h, \gamma}_{\rho}} u \leq \delta^{-\alpha} \rho^\alpha .
\ee
\end{lemma}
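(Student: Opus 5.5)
We argue by induction on $k\ge 0$, proving
\[
\osc_{Q^{h,\gamma}_{\delta^k}} u \le (1-\theta)^k \le \delta^{k\alpha}.
\]
The second inequality holds because $\alpha\le \log(1-\theta)/\log\delta$ and $\delta<1$. Given $\rho\in(0,1)$, choose $k$ with $\delta^{k+1}<\rho\le\delta^k$. The cylinders are nested, $Q^{h,\gamma}_\rho\subset Q^{h,\gamma}_{\delta^k}$ (this needs checking; see below), so
\[
\osc_{Q^{h,\gamma}_\rho}u\le\delta^{k\alpha}=\delta^{-\alpha}\delta^{(k+1)\alpha}\le\delta^{-\alpha}\rho^\alpha .
\]
The case $k=0$ is the hypothesis $\osc_{Q^h_1}u\le1$, since $Q^{h,\gamma}_1=Q^h_1$ by Remark~\ref{rmk:CylinderTransformation}(iii).

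For the inductive step, set $r=\delta^k$ and $m_k=\inf_{Q^{h,\gamma}_{r}}u$. We rescale with Lemma~\ref{lem:HJ_rescaled}, in the form of Corollary~\ref{rmk:HJ_rescaled_Ar}, applied with the drift $A_h$ in place of $A$:
\[
v:=r^{-\alpha}\bigl(u\circ\widetilde D^{(\gamma)}_r-m_k\bigr).
\]
The key computation is $rS(r)^{-1}A_hS(r)=A_{hr}$. By Remark~\ref{rmk:CylinderTransformation}(i), $\widetilde D^{\gamma}_r Q^{hr,\gamma}_1=Q^{h,\gamma}_r$, so $v$ is defined on $Q^{hr}_1$ and satisfies
\[
0\le v\le r^{-\alpha}\osc_{Q^{h,\gamma}_r}u\le 1 .
\]
Moreover $v$ is a supersolution of the equation with drift $A_{hr}$ and constant $r^{1-\alpha}\e\le\e\le\e_\ast$. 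It is also a subsolution with source $r^{1-\alpha}\e f\circ\widetilde D_r$. By \eqref{eq:fr_Lp}, this source has $L^p(Q^{hr}_1)$ norm at most $\e r^{\sigma}$, where
\[
\sigma=1-\tfrac1p\Bigl(\tfrac Nq+1+\sum_j jn_j\Bigr)-\alpha\Bigl(1+\tfrac N{pq'}\Bigr)\ge0,
\]
by the second entry in the definition of $\alpha$. Since $hr\le h\le h_\ast$, the drift stays in the admissible range.

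The source must be written as $\e' f'$ with $\|f'\|_{L^p}=1$ and $\e'\le\e_\ast$. Take $\e'=\e r^{\sigma}\|f\circ\widetilde D_r\|_{L^p}/r^{\sigma}$, i.e. the actual norm, which is $\le\e$. If the norm is zero, perturb $f$ slightly; the proof of Proposition~\ref{prop:ImproveOsc} only uses $\|f\|_{L^p}\le1$ anyway, and we note this explicitly. One subtlety is that the supersolution constant and the source prefactor then differ. Proposition~\ref{prop:ImproveOsc}'s proof treats them separately, with the $\e$ term bounded by $\e_\ast$ in the lower bound and $C\e^\mu$ in the upper bound, so both being $\le\e_\ast$ suffices. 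Applying Proposition~\ref{prop:ImproveOsc} with $h$ replaced by $hr$ gives $\osc_{Q^{hr,\gamma}_\delta}v\le1-\theta$. Mapping back with $\widetilde D_r$ and using $\widetilde D_rQ^{hr,\gamma}_\delta=Q^{h,\gamma}_{r\delta}$ yields
\[
\osc_{Q^{h,\gamma}_{\delta^{k+1}}}u\le r^\alpha(1-\theta)\le\delta^{(k+1)\alpha}.
\]
To keep the induction clean, we carry the normalized statement $\osc_{Q^{h,\gamma}_{\delta^k}}u\le\delta^{k\alpha}$ itself, since this is exactly what normalizes $v$ to $[0,1]$.

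The main obstacle is the bookkeeping of the cylinders and the drift under dilation. The cylinder identity in Remark~\ref{rmk:CylinderTransformation}(i) must be applied with the correct parameter, $h\rho^{-1}$ versus $h\rho$. Its orientation should be double-checked via $S(\rho)^{-1}A_hS(\rho)=\rho^{-1}A_{h\rho}$, so that rescaling to small scales sends $h\to hr\le h_\ast$ rather than increasing it. We also need the monotonicity $Q^{h,\gamma}_\rho\subset Q^{h,\gamma}_{\rho'}$ for $\rho\le\rho'$, used in the final interpolation. The time ranges are nested, so what remains is the spatial inclusion. Once $e^{-tA_h}x\in\Omega^\gamma_\rho$, we have $|S(\rho')^{-1}z|\le|S(\rho)^{-1}z|\,\rho^{0}$ componentwise, because $\rho'^{-i}\le\rho^{-i}$. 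Hence $|S(\rho')^{-1}z|<\rho^\gamma\le\rho'^\gamma$, which gives the inclusion.
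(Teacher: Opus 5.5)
Your proof is correct and follows the paper's argument. Both proceed by induction over the scales $\delta^k$ using the dilations $\widetilde D^{(\gamma)}_{\delta^k}$, which turn $A_h$ into $A_{h\delta^k}$ and map $Q^{h\delta^k}_1$ onto $Q^{h,\gamma}_{\delta^k}$. Proposition~\ref{prop:ImproveOsc} is applied at each step, and nested cylinders handle the interpolation between consecutive scales. Your explicit subtraction of the infimum and your treatment of the source normalisation are details the paper glosses over, with two small corrections: the intended choice is $\e' = \|r^{1-\alpha}\e f\circ\widetilde D_r\|_{L^p(Q^{hr}_1)} \le \e r^\sigma$, since the formula you wrote simplifies to something else, and the induction proves the bound $\delta^{k\alpha}$, not $(1-\theta)^k$.
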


\begin{proof}
The proof is by induction. Let 
\be u_0 : = u , \qquad u_{n+1} : = \delta^{-\alpha} u_n \circ \widetilde D_\delta^\gamma = \delta^{- \alpha n} u \circ \widetilde D_{\delta^n}^\gamma,
\ee
where we have used the fact that $( \widetilde D_\delta^\gamma )^n = \widetilde D_{\delta^n}^\gamma$.
Recalling Remark~\ref{rmk:CylinderTransformation} we note further that $( \widetilde D_{\delta^n}^\gamma)^{-1} Q^h_1  = Q^{\delta^n h, \gamma}_{ \delta^{-n}} \supset Q^{\delta^n h}_{ 1}$, and so $u_n$ is well-defined at least on the set $Q^{\delta^n h}_{ 1}$.

From the rescaling \eqref{eq:f_PRescalingCost} we have moreover that
$u_n$ is a viscosity supersolution of 
\be
\label{eq:HJ_ImpOfOsc_sup_n}
\partial_t u_n + \left \langle A_{ \delta^n h} x ,  \nabla_x u_n \right \rangle + \frac{\Lambda^q}{q} | P_0 \nabla_x u_n |^q + \delta^{n (1-\alpha)} \e = 0 \qquad \text{in} \; \; Q^{\delta^n h}_1
\ee
and a viscosity subsolution of 
\be
\label{eq:HJ_ImpOfOsc_sub_n}
\partial_t u_n + \left \langle A_{\delta^n h} x , \nabla_x u_n \right \rangle + \frac{\lambda^q}{q} |P_0 \nabla_x u_n |^q -  \e  \delta^{  n \left ( 1 -  \frac{1}{p} (\frac{N}{ q} + 1 + \sum_{i=1}^\kappa i n_i )  - \alpha \left (1 + \frac{N}{ p q '} \right ) \right ) }  f_{\delta^n} = 0 \qquad \text{in} \; Q^{\delta^n h}_1 ,
\ee
where $\| f_{\delta^n} \|_{L^p } \leq 1$
Due to the choice of $\alpha$ \eqref{def:alpha_Holder}, the exponents of $\delta$ in each of the source terms are non-negative. Since $\delta < 1$, $u_n$ therefore satisfies the assumptions of Proposition~\ref{prop:ImproveOsc}.

As the inductive step, assume that
\be
\osc_{Q^{\delta^n h}_1} u_n \leq 1.
\ee
Then, by Proposition~\ref{prop:ImproveOsc},
\be
\osc_{Q^{\delta^n h, \gamma}_\delta} u_n \leq 1 - \theta.
\ee
Meanwhile, by definition of $u_{n+1}$ we have
\be 
\osc_{ Q^{\delta^{n+1} h}_1} u_{n+1} = \delta^{-\alpha} \osc_{ Q^{\delta^{n+1} h}_1} u_n \circ \widetilde D_\delta^\gamma
\ee
By Remark~\ref{rmk:CylinderTransformation}, $\widetilde D_\delta^\gamma Q^{\delta^{n+1} h}_1 = Q^{\delta^{n} h, \gamma}_\delta$.
Hence
\be
\osc_{ Q^{\delta^{n+1} h}_1} u_{n+1} = \delta^{-\alpha} \osc_{Q^{\delta^{n} h, \gamma}_\delta} u_n \leq \delta^{-\alpha} (1 - \theta).
\ee
By the choice of $\alpha$ \eqref{def:alpha_Holder}, $\delta^{-\alpha} (1 - \theta) \leq 1$. Thus we have obtained 
\be
\osc_{ Q^{\delta^{n+1} h}_1} u_{n+1}  \leq 1 
\ee
as required.

Therefore, by induction, for all integer $n \geq 0$ we have 
\be
\osc_{Q^{\delta^n h}_1} u_n \leq 1.
\ee
That is, 
\be
\osc_{Q^{\delta^n h}_1} \delta^{-\alpha n} u \circ ( \widetilde D_\delta^\gamma )^n \leq 1.
\ee
Hence, using Remark~\ref{rmk:CylinderTransformation} once more, 
\be
\osc_{Q^{h, \gamma}_{\delta^n}} u = \osc_{\widetilde D_{\delta^n} Q^{\delta^n h}_1}  u  \leq \delta^{\alpha n}.
\ee

For general $\rho > 0$, find $n$ such that $\delta^{n+1} \leq \rho \leq \delta^n$. Then 
\be
\osc_{Q^{h, \gamma}_\rho} u \leq \osc_{Q^{h, \gamma}_{\delta^n}} u \leq \delta^{ \alpha n} \leq \delta^{-\alpha} \delta^{ \alpha (n+1)} \leq \delta^{-\alpha} \rho^\alpha .
\ee
The proof is complete.

\end{proof}

\begin{lemma} \label{lem:inf_rho}
Let $\gamma \in [1/q, 1]$ and $0 \leq h < h_\ast$ be given. For all $t \leq 0$ and $x \in \R^{N}$ define
\be \label{def:inf_rho}
\rho(t,x) : = \inf \{ \rho' > 0 : (t,x) \in \ov{Q}^{h, \gamma}_{\rho'} \} .
\ee
Then there exists a constant $C>0$ depending on $h_\ast$ and $q$ only such that
\be
\rho(t,x) \wedge 1 \leq C \left ( |t| + \sum_{j=0}^\kappa |P_j x|^{\frac{1}{\gamma + j}} \right ) .
\ee
\end{lemma}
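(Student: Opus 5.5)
We need to show: if $\rho_0 := C(|t| + \sum_j |P_j x|^{1/(\gamma+j)})$ is at most 1, then $(t,x)\in \ov Q^{h,\gamma}_{\rho_0}$. If $\rho_0 > 1$, there is nothing to prove, since $\rho(t,x)\wedge 1 \le 1 < \rho_0$. So we assume $\rho_0 \le 1$ and, using the definition of the infimum in \eqref{def:inf_rho}, show that $(t,x)$ belongs to the closed cylinder of radius $\rho_0$.

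By Definition~\ref{def:Q_r}, membership in $\ov Q^{h,\gamma}_{\rho}$ means two things: $t\in[-\rho,0]$, and $|S(\rho)^{-1}e^{-tA_h}x|\le \rho^\gamma$. The time condition holds as soon as $C\ge 1$, because then $|t|\le\rho_0$. For the spatial condition we write
\[
S(\rho)^{-1}e^{-tA_h}x = \bigl(S(\rho)^{-1}e^{-tA_h}S(\rho)\bigr)\,S(\rho)^{-1}x.
\]
Since $|t|\le\rho\le 1$, we can write $-t = \rho\tau$ with $\tau=|t|/\rho\in[0,1]$. Lemma~\ref{lem:eta_rep}(i) then gives
\[
S(\rho)^{-1}e^{-tA_h}S(\rho) = e^{\tau A_0} + R_A(\tau;h\rho),
\]
and $h\rho\le h_\ast$. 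Exactly as in \eqref{est:S_flow_conj}, this matrix is bounded in norm by a constant $C_{h_\ast}$, uniformly in $\tau\in[0,1]$ and $h\rho\le h_\ast$. This uses the series bound on $R_A$ from the proof of Lemma~\ref{lem:norms}.

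It therefore suffices to arrange
\[
|S(\rho_0)^{-1}x| \le \rho_0^\gamma / C_{h_\ast}.
\]
Now $|S(\rho_0)^{-1}x|\le \sum_j \rho_0^{-j}|P_jx|$. For each $j$, the definition of $\rho_0$ gives $|P_jx|\le (\rho_0/C)^{\gamma+j}$. Hence, using $\gamma+j\ge\gamma>0$ and $C\ge1$,
\[
\rho_0^{-j}|P_jx|\le C^{-(\gamma+j)}\rho_0^{\gamma}\le C^{-\gamma}\rho_0^\gamma \le C^{-1/q}\rho_0^\gamma.
\]
Summing over $j$ gives $|S(\rho_0)^{-1}x|\le(\kappa+1)C^{-1/q}\rho_0^\gamma$. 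We then choose $C$ large enough that $(\kappa+1)C^{-1/q}\le 1/C_{h_\ast}$. This choice depends only on $q$, $h_\ast$ and $A$ (through $\kappa$ and $C_{h_\ast}$), and is uniform in $\gamma\in[1/q,1]$. With it, $(t,x)\in\ov Q^{h,\gamma}_{\rho_0}$, so $\rho(t,x)\le\rho_0$.

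The degenerate case $\rho_0=0$ means $t=0$ and $x=0$. Then $(0,0)$ lies in every cylinder, so $\rho(t,x)=0$ and the bound holds trivially.

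The main point needing care is the uniform bound on the conjugated flow $S(\rho)^{-1}e^{-tA_h}S(\rho)$. It depends on two facts: $|t|/\rho\le1$, which the choice $C\ge1$ guarantees, and $h\rho\le h_\ast$. These are exactly what let us invoke Lemma~\ref{lem:eta_rep}. One should also check the sign convention: the lemma is applied with the time parameter $-t\ge0$, which is consistent with how \eqref{est:S_flow_conj} was used earlier.
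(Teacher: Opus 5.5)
Your proposal is correct and follows essentially the same route as the paper: take $\rho_0 = C\bigl(|t| + \sum_j |P_j x|^{1/(\gamma+j)}\bigr)$, bound the conjugated flow $S(\rho_0)^{-1}e^{-tA_h}S(\rho_0) = e^{(|t|/\rho_0)A_{h\rho_0}}$ uniformly using $|t|\le\rho_0\le 1$ and $h\rho_0\le h_\ast$, then use $|P_jx|\le(\rho_0/C)^{\gamma+j}$ and choose $C$ large, uniformly in $\gamma\in[1/q,1]$. Your version is slightly tidier, since you treat the degenerate case $(t,x)=(0,0)$ separately and write inequalities where the paper writes equalities.
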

\begin{proof}
Recall that by definition, $(t,x) \in \ov{Q}^{h, \gamma}_{\rho'}$ if and only if $- \rho' \leq t \leq 0$ and $|D_{1/\rho'}^{(\gamma)} e^{-t A_h} x | \leq 1$.
Consider, for some $R>1$ to be determined, $\rho_R$ defined by
\be
\rho_R : = R  \left ( |t| + \sum_{j=0}^\kappa |P_j x|^{\frac{1}{\gamma + j }} \right ) .
\ee
We claim that, 
if $R$ is large enough (independently of $(t,x)$) and $\rho_R < 1$, 
then $(t,x) \in \ov{Q}^r_{\rho_R}$; this implies that $\rho \leq \rho_R$ for $\rho_R < 1$.
If $\rho_R \geq 1$, then there is nothing to prove.

Since $R>1$ by assumption,
$\rho_R > |t|$. Hence indeed $- \rho_R \leq t \leq 0$, since $t$ was assumed non-positive.
It remains to estimate $|D_{1/\rho_R}^{(\gamma)} e^{-t A_h} x |$.

First observe that
\be
D_{1/\rho_R}^{(\gamma)} e^{- t A_h} = e^{- t \rho_R^{-1} A_{h \rho_R}} D_{1/\rho_R}^{(\gamma)} .
\ee
By Lemma~\ref{lem:eta_rep},
\be
\| e^{- t \rho_R^{-1} A_{h \rho_R}} \| \leq \| e^{A_0} + R_{A}(|t| \rho_R^{-1} ; h \rho_R) \| .
\ee
Since $|t| \rho_R^{-1} \leq 1$ and $h \rho_R \leq h_\ast$, there exists a constant $C>0$ depending on $h_\ast$ only such that
\be
\| e^{-t \rho_R^{-1} A_{h \rho_R}} \| \leq C .
\ee
It follows that
\be
|D_{1/\rho_R}^{(\gamma)} e^{-t A_h} x | \leq C | D_{1/\rho_R}^{(\gamma)} x| .
\ee

To estimate 
\be
| D_{1/\rho_R}^{(\gamma)} x | = \left ( \sum_{j=0}^\kappa \rho_R^{- 2 (\gamma + j )} |P_j x|^2 \right )^{1/2},
\ee
observe that, for each $j=0, \ldots, \kappa$,
\be
\rho_R \geq R |P_j x|^{\frac{1}{\gamma + j }} .
\ee
Hence
\be
\rho_R^{- 2 (\gamma  + j )} |P_j x|^2 \leq R^{-2(\gamma + j)} ,
\ee
whence it follows that
\be
| D_{1/\rho_R}^{(\gamma)} x | = R^{-\gamma} \left ( \sum_{j=0}^\kappa R^{- 2  j }  \right )^{1/2}.
\ee
Since $R>1$, we can obtain the following estimate which is independent of $\gamma \in [\frac{1}{q}, 1]$:
\be
| D_{1/\rho_R}^{(\gamma)} x | \leq R^{-1/q} \left ( \sum_{j=0}^\kappa R^{- 2  j }  \right )^{1/2}.
\ee

We have therefore shown that
\be
|D_{1/\rho_R}^{(\gamma)} e^{t A_h} x | \leq C_{h_\ast} R^{-1/q} \left ( \sum_{j=0}^\kappa R^{- 2  j }  \right )^{1/2} .
\ee
Thus, for $R$ large enough that 
\be
C_{h_\ast} R^{-1/q} \left ( \sum_{j=0}^\kappa R^{- 2  j }  \right )^{1/2} < 1,
\ee
we have 
\be
|D_{1/\rho_R}^{(\gamma)} e^{- t A_h} x | < 1 ;
\ee
in other words, $(t,x) \in Q^{h, \gamma}_{\rho_R}$. Notice that the choice of $R$ depends only on $C_{h_\ast}$ and $q$, and hence only on $h_\ast$ and $q$. It follows that $\rho < \rho_R$, which completes the proof.

\end{proof}

\begin{proof}[Proof of Proposition~\ref{prop:small_Holder}]
Let $\alpha, \gamma$ be as in \eqref{def:alpha_Holder}, and let $\rho = \rho(t,x)$ be defined by \eqref{def:inf_rho} for this value of $\gamma$. Since $(t,x) \in \ov{Q}^h_1$ by assumption, $\rho \leq 1$. Hence, by Lemma~\ref{lem:inf_rho} there exists $C>0$ such that
\be \label{eq:rho_bound_smallHolder}
\rho \leq C \left ( |t| + \sum_{j=0}^\kappa |P_j x|^{\frac{1}{\gamma + j}} \right ) .
\ee

Meanwhile, by Lemma~\ref{lem:OscIter}, for some constant $C>0$,
\be
\osc_{\ov{Q}^{h, \gamma}_{\rho}} u \leq C \rho^\alpha .
\ee
Since $(t,x), (0,0) \in \ov{Q}^{h, \gamma}_{\rho}$, we have in particular that
\be
|u(t,x) - u(0,0)| \leq \osc_{\ov{Q}^{h, \gamma}_{\rho}} u \leq C \rho^\alpha .
\ee
Hence, by \eqref{eq:rho_bound_smallHolder},
\begin{align}
|u(t,x) - u(0,0)| & \leq C \left ( |t| + \sum_{j=0}^\kappa |P_j x|^{\frac{1}{\gamma + j}} \right )^\alpha \\
& \leq C \left ( |t|^\alpha + \sum_{j=0}^\kappa |P_j x|^{\frac{\alpha}{\gamma + j}} \right ) .
\end{align}
Finally, substituting $\gamma = \frac{1}{q} + \frac{\alpha}{q'}$ completes the proof.

\end{proof}

\begin{proof}[Proof of Corollary~\ref{cor:HolderSmall}]
	Given $(s,y)$ and $(t,x)$ with $s \leq t$, 
Then $u \circ l_{(t,x)}^h$ satisfies the assumptions of Proposition~\ref{prop:small_Holder}, while
\begin{align}
|u(t,x) - u(s,y)| &= |u \circ l_{(t,x)}^h(0,0) - u \circ l_{(t,x)}^h ((t,x)^{-1}_h \diamond_h (s,y)) | 
\end{align}
If $(t,x)^{-1}_h \diamond_h (s,y) \in \ov{Q}^h_1$ then by Proposition~\ref{prop:small_Holder}, 
\be
 |u \circ l_{(t,x)}^h(0,0) - u \circ l_{(t,x)}^h ((t,x)^{-1}_h \diamond_h (s,y)) |\leq C \omega_\alpha((t,x)^{-1}_h \diamond_h (s,y)) .
\ee
Otherwise $(t,x)^{-1}_h \diamond_h (s,y) \not \in \ov{Q}^h_1$, in which case 
\be
\rho((t,x)^{-1}_h \diamond_h (s,y)) \geq 1,
\ee
where $\rho$ is the function defined in \eqref{def:inf_rho}.
Then, by Lemma~\ref{lem:inf_rho}, there exists a constant $C > 0$ such that
\be
1 \leq  C \omega_\alpha((t,x)^{-1}_h \diamond_h (s,y)) .
\ee
Then, since $\osc_{\ms{U}} u \leq 1$ by assumption,
\begin{align}
 |u \circ l_{(t,x)}^h(0,0) - u \circ l_{(t,x)}^h ((t,x)^{-1}_h \diamond_h (s,y)) | & \leq 1 \\
 & \leq C \omega_\alpha((t,x)^{-1}_h \diamond_h (s,y)) .
\end{align}
This completes the proof.
\end{proof}

\subsection{General Case}\label{subsec:general}

To complete the proof of Theorem~\ref{thm:main} in the general case, we perform a final rescaling that reduces the problem to the setting of Corollary~\ref{cor:HolderSmall}.

\begin{proof}[Proof of Theorem~\ref{thm:main}]
Fix a compact set $K \subset \ms{U}$.
We perform a translation and rescaling to obtain a function that satisfies the assumptions of Corollary~\ref{cor:HolderSmall}. First, let $\hat u = u/(2 \| u \|_{L^\infty})$; this has $\osc_{\ms{U}} \hat u \leq 1$ and is a viscosity supersolution of
\be \label{eq:HJ_geq_OscOne}
\partial_t \hat u + \left \langle A x , \nabla_x \hat u \right \rangle + \frac{\Lambda^q (2 \| u \|_{L^\infty})^{q-1}}{q} |P_0 \nabla_x \hat u |^q + \frac{c_0}{2 \| u \|_{L^\infty}} = 0 
\ee
and a viscosity subsolution of
\begin{align}\label{eq:HJ_leq_OscOne}
\partial_t \hat u + \left \langle A x , \nabla_x \hat u \right \rangle + \frac{\lambda^q (2 \| u \|_{L^\infty})^{q-1}}{q} |P_0 \nabla_x \hat u |^q - \frac{f}{2 \| u \|_{L^\infty}} = 0 
\end{align}
on $\ms{U}$.

Then let $\e_\ast, h_\ast, \theta, \delta$ be the parameters from Proposition~\ref{prop:ImproveOsc} corresponding to inequalities \eqref{eq:HJ_geq_OscOne}-\eqref{eq:HJ_leq_OscOne}; these depend on $A$, $q, \Lambda, \lambda$ and $\| u \|_{L^\infty}$.
We then use the dilations $\widetilde D_\rho^{(\frac{1}{q})}$ to transform \eqref{eq:HJ_geq_OscOne}-\eqref{eq:HJ_leq_OscOne} into inequalities with sufficiently small source terms and $h \leq h_\ast$, such that Corollary~\ref{cor:HolderSmall} is applicable.

First fix, an open set $\ms{V}$ such that $\overline{\ms{V}}$ is compact and $\overline{\ms{V}} \subset \ms{U}$. Then $f_+ \in L^p(\ms{V})$ and $\sup_{\ms{V}} c_0$ is finite. On $\ms{V}$, $u$ is a viscosity supersolution of
\be \label{eq:HJ_geq_OscOne_loc}
\partial_t \hat u + \left \langle A x , \nabla_x \hat u \right \rangle + \frac{\Lambda^q (2 \| u \|_{L^\infty})^{q-1}}{q} |P_0 \nabla_x \hat u |^q + \frac{\sup_{\ms{V}} c_0}{2 \| u \|_{L^\infty}} = 0 
\ee
and a viscosity subsolution of
\begin{align}\label{eq:HJ_leq_OscOne_loc}
\partial_t \hat u + \left \langle A x , \nabla_x \hat u \right \rangle + \frac{\lambda^q (2 \| u \|_{L^\infty})^{q-1}}{q} |P_0 \nabla_x \hat u |^q - \frac{f_+}{2 \| u \|_{L^\infty}} = 0 .
\end{align}
Let
\be
\hat u_r = \hat u \circ \widetilde D_r^{(\frac{1}{q})} ,
\ee
for some $r \in (0,1)$ to be determined. By Corollary~\ref{rmk:HJ_rescaled_Ar} (using the parameters $\gamma = \frac{1}{q}$, $\alpha = 0$), $\hat u_r$ is a viscosity supersolution of
\be
\partial_t \hat u_r + \left \langle A_r x ,  \nabla_x \hat u_r \right \rangle + \frac{\Lambda^q (2 \| u \|_{L^\infty})^{q-1}}{q} | P_0 \nabla_x \hat u_r |^q + r \frac{ \sup_{\ms{V}} c_0}{2 \| u \|_{L^\infty}} = 0 \label{eq:HJ_hatur_geq} 
\ee
and a viscosity subsolution of
\be
\partial_t \hat u_r + \left \langle A_r x , \nabla_x \hat u_r \right \rangle + \frac{\lambda^q (2 \| u \|_{L^\infty})^{q-1}}{q} |P_0 \nabla_x \hat u_r |^q -\frac{f_+^{[r]}}{2 \| u \|_{L^\infty}} = 0  \label{eq:HJ_hatur_leq}
\ee

on the set $\ms{V}_r : = \widetilde D_{1/r}^{(\frac{1}{q})} \ms{V}$. We have
\be
\left \| \frac{f_+^{[r]}}{2 \| u \|_{L^\infty}} \right \|_{L^p(\ms{V}_r)} \leq  r^{ 1 - \frac{1}{p}  (\frac{N}{ q} + 1 + \sum_{i=1}^\kappa i n_i ) } \frac{\| f_+ \|_{L^p(\ms{V})}}{2 \| u \|_{L^\infty}} .
\ee
Moreover $\osc_{\ms{U}_r} \hat u_r \leq 1$.

Now choose $r>0$ small enough to satisfy the following:
\begin{align}
& r \leq h_\ast \\
& r \frac{\sup_{\ms{V}} c_0}{2 \| u \|_{L^\infty}} \leq \e_\ast \\
& r^{  1 - \frac{1}{p}  (\frac{N}{ q} + 1 + \sum_{j=1}^\kappa j n_j )  } \frac{ \| f_+ \|_{L^p(\ms{V})}}{2 \| u \|_{L^\infty}}  \leq \e_\ast .
\end{align}

The final hypothesis that we require in order to be able to apply Corollary~\ref{cor:HolderSmall} is that the set $K_r : = \widetilde D_{1/r}^{(\frac{1}{q})} K$ satisfies
\be \label{eq:CylinderCover}
\bigcup_{\xi \in K_r} l_{ \xi}^r \ov{Q}^r_1  \subset \ms{V}_r = \widetilde D_{1/r}^{(\frac{1}{q})} \ms{V}.
\ee
That is, we can place a cylinder at every point of $K_r$, and this cylinder will be fully contained in $\ms{V}_r$.
This can be ensured by making $r>0$ smaller (if necessary):
indeed, for all $\xi \in K_r$ we may write $\xi = \widetilde D_{1/r}^{(\frac{1}{q})} \xi '$ for some $\xi ' \in K$. Then by the properties of left translations \eqref{eq:LT_dil} and cylinders \eqref{eq:Cylinder_dil} with respect to dilations,
\be
\widetilde D_{r}^{(\frac{1}{q})} l_{ \xi}^r \ov{Q}^r_1 =  l_{ \xi '}^1 \widetilde D_{r}^{(\frac{1}{q})} \ov{Q}^r_1 = l_{ \xi '}^1 \ov{Q}^{1, \frac{1}{q}}_r .
\ee
Since $\lim_{r \to 0} \diam \ov{Q}^{1, \frac{1}{q}}_r = 0$, for all sufficiently small $r$ we have $l_{ \xi '}^1 \ov{Q}^{1, \frac{1}{q}}_r \subset \ms{V}$. Finally, the compactness of $K$ ensures that there exists a {\it uniform} $r>0$ such that $l_{ \xi '}^1 \ov{Q}^{1, \frac{1}{q}}_r \subset \ms{V}$ for all $\xi ' \in K$, and thus \eqref{eq:CylinderCover} holds.

We note that $r$ thus depends on the parameters $A, q, \Lambda, \lambda$ and $\| u \|_{L^\infty}$ (through $h_\ast$ and $\e_\ast$), as well as $\inf \{ |x-y| : x \in K, y \in\ms{V}^c\}$, $\sup_{\ms{V}} c_0$ and $\| f_+ \|_{L^p(\ms{V})}$.

Thus $\hat u_r$, $\ms{V}_r$ and $K_r$ satisfy the hypotheses of Corollary~\ref{cor:HolderSmall}. We deduce that, for all $(s,y), (t,x) \in K_r$ with $s \leq t$,
\be \label{eq:hatur_Holder}
|\hat u_r (s,y) - \hat u_r (t,x)| \leq  C \omega_\alpha \left( (t,x)^{-1}_r \diamond_r (s,y) \right) .
\ee
To conclude the proof, we rewrite  \eqref{eq:hatur_Holder} in terms of $u$.

Since $u = 2 \| u \|_{L^\infty(\ms{U})} \, \hat u_r \circ \widetilde D_{1/r}^{(\frac{1}{q})}$
\be
| u (s,y) -  u (t,x)| \leq  C \| u \|_{L^\infty(\ms{U})} \, \omega_\alpha \left( (\widetilde D_{1/r}^{(\frac{1}{q})} (t,x))^{-1}_r \diamond_r \widetilde D_{1/r}^{(\frac{1}{q})} (s,y) \right) .
\ee
Next, we identify $(\widetilde D_{1/r}^{(\frac{1}{q})} (t,x))^{-1}_r$: by definition
\be
(\widetilde D_{1/r}^{(\frac{1}{q})} (t,x))^{-1}_r \diamond_r  \widetilde D_{1/r}^{(\frac{1}{q})} (t,x) = (0,0) .
\ee
By \eqref{eq:LG_dil}, 
\be
 (\widetilde D_{1/r}^{(\frac{1}{q})} (t,x))^{-1}_r \diamond_r  \widetilde D_{1/r}^{(\frac{1}{q})} (t,x) =\widetilde D_{1/r}^{(\frac{1}{q})} \left [ \widetilde D_{r}^{(\frac{1}{q})} (\widetilde D_{1/r}^{(\frac{1}{q})} (t,x))^{-1}_r \diamond_1   (t,x) \right ] .
\ee

By \eqref{eq:Inverse_dil},
\be
\left( \widetilde D_{1/r}^{(\frac{1}{q})} (t,x) \right )^{-1}_r = \widetilde D_{1/r}^{(\frac{1}{q})} \, (t,x)_{1}^{-1}
\ee
Hence, applying \eqref{eq:LG_dil} once more,
\begin{align}
 (\widetilde D_{1/r}^{(\frac{1}{q})} (t,x))^{-1}_r \diamond_r \widetilde D_{1/r}^{(\frac{1}{q})} (s,y) & = \widetilde D_{1/r}^{(\frac{1}{q})} (t,x)^{-1}_1 \diamond_r \widetilde D_{1/r}^{(\frac{1}{q})} (s,y) \\
 & = \widetilde D_{1/r}^{(\frac{1}{q})} \left [ (t,x)^{-1}_1 \diamond_1 (s,y) \right ] .
\end{align}

Thus
\be
| u (s,y) -  u (t,x)| \leq  C_r \| u \|_{L^\infty(\ms{U})} \omega_\alpha ((t,x)^{-1}_1 \diamond_1 (s,y)) .
\ee

Finally, we observe that
\be
(t,x)^{-1}_1 \diamond_1 (s,y) = (s-t, y - e^{(s-t)A} x) .
\ee

\end{proof}

\bibliographystyle{abbrv} 
\bibliography{Hypo_MFG_bib}

\end{document}